\newcommand{\e}{\mathrm{e}}
\newcommand{\R}{\mathbb{R}}
\newcommand{\A}{\mathbb{A}}
\newcommand{\B}{\mathbb{B}}
\newcommand{\N}{\mathbb{N}}
\newcommand{\C}{\mathbb{C}}
\newcommand{\Cplus}{\mathbb{C}_+}
\newcommand{\Cpluscl}{\overline{\mathbb{C}_+}}
\renewcommand{\O}{\operatorname{O}}
\newcommand{\NA}{{}^N\!\mathcal{A}}
\newcommand{\W}{\mathcal{W}}
\newcommand{\T}{\mathsf{T}}
\renewcommand{\d}{\mathrm{d}}
\newcommand{\sech}{\operatorname{sech}}
\newcommand{\spn}{\mathrm{span}\,}
\newcommand{\sign}{\mathrm{sign}\,}
\renewcommand{\Re}{\mathrm{Re}\,}
\renewcommand{\Im}{\mathrm{Im}\,}
\newcommand{\abs}[1]{\left\lvert #1 \right\rvert}
\newcommand{\scalarprod}[1]{\left\langle #1\right\rangle}
\newcommand{\KdV}{\text{\textup{KdV}}}
\newcommand{\leftup}[1]{{}^{#1}\! }
\theoremstyle{plain}
\newtheorem{thm}{Theorem}
\newtheorem*{thm*}{Theorem}
\newtheorem{lem}{Lemma}
\newtheorem{prop}{Proposition}
\newtheorem*{prop*}{Proposition}
\newtheorem*{conj*}{Conjecture}
\theoremstyle{definition}
\theoremstyle{remark}
\newtheorem*{rem}{Remark}
\numberwithin{equation}{section}
\title[Evans Function Approach]{An Evans-function approach to spectral stability of internal solitary waves in
stratified fluids}
\author{Andreas Klaiber}
\date{\today}
\thanks{\noindent\emph{Present address:} Zentrum Mathematik, Technische Universit\"at M\"unchen, Boltzmannstr.\ 3, 85747
Garching, Germany. \emph{Phone:}  +49 89 289 17900. \emph{Email:} klaiber@ma.tum.de}
\keywords{Spectral stability, Evans function, internal solitary waves, stratified fluids}
\begin{document}
\begin{abstract}
Frequently encountered in nature, internal solitary waves in stratified fluids are well-observed and well-studied 
from the experimental, the theoretical, and the numerical perspective. 
From the mathematical point of view, these waves are exact solutions of the 2D Euler equations for incompressible, inviscid
fluids. Contrasting with a rich theory for their existence 
and the development of methods for computing these waves, their stability analysis has hardly received attention at a
rigorous mathematical level.

This paper proposes a new approach to the investigation of stability of internal solitary 
waves in a continuously stratified fluidic medium and carries out the following four steps of this approach:
(I)~to formulate the eigenvalue problem as an infinite-dimensional spatial-dynamical system,
(II)~to introduce finite-dimensional truncations of the spatial-dynamics description,
(III)~to demonstrate that each truncation, of any order, permits a well-defined Evans function,
(IV)~to prove absence of small zeros of the Evans function in the small-amplitude limit. 
The latter notably implies the low-frequency spectral stability of small-amplitude internal solitary waves to arbitrarily high truncation order.
\end{abstract}

\maketitle
\setcounter{section}{-1}
\section{Introduction}\label{sec:intro}
Fluidic media that are stratified according to varying density, as for example lakes, oceans, and atmospheres,
typically permit the development and propagation of so-called \textit{internal waves},
which, in contrast to the familiar surface waves, chiefly displace fluid elements far beneath the surface.
Internal waves which are close to some quiescent state both far ahead and far astern the wave are called
\textit{internal solitary waves} (ISWs). As ISWs provide important mechanisms for mixing and energy transport and thus have direct
ecological implications, the fields of ocean\-o\-gra\-phy, limnology, and atmosphere science have devoted considerable attention to their
observation and description, see \cite{Apel02,BrownChristie98,HelfrichMelville06,PreussePeetersEtAl10}.

The ``channel model'' widely used in this context is given by the 
2D Euler equations for incompressible fluids posed on a strip of constant finite height and infinite horizontal extent. 
Mathematical results based on this model broadly show the existence of solitary 
waves (see below), but the stability of ISWs within the channel model is an open problem.
There exist numerous works on comparatively simple model equations, 
e.g.\ Korteweg-deVries equation, extended Korteweg-deVries equation, and intermediate long-wave equation, 
to name just a few, for which the question of 
stability of solitary waves has been answered comprehensively.
These results certainly have implications on the stability properties of ISWs 
in the full-Euler channel-model setting. But, to the best of our knowledge, no stability analysis has
been conducted in this setting as yet.

The central object of the present paper is a sequence of $4(N\!+\!1)$-dimensional
systems, $\left(E_N\right)_{N\in\N}$, of ordinary differential equations on the real line 
that are associated with the spectral stability problem of ISWs in the channel model.
We prove two main results on these ``truncated eigenvalue problems'', 
one for essentially arbitrary ISWs, the other  for ISWs of small
amplitude. The result for arbitrary ISWs (Theorem~III in Section~3) 
establishes what is called consistent splitting on the closed right half 
$\Cpluscl$ 
of the complex plane, and thus the possibility of properly
defining an Evans function, $D_N$, on
$\Cpluscl$ 
for each $E_N$. The result for small-amplitude ISWs (Theorem~IV in Section~4) shows that each $E_N$
has no bounded solutions for small non-negative-real-part values of 
the spectral parameter, i.e., there do not exist unstable modes of small frequency. 

Before obtaining these rigorous results on the truncated problems $E_N$,
we motivate the $E_N$ from the linearization of the full Euler equations
about the ISW profile. This is done in two steps. The first step consists 
in showing that the eigenvalue problem derived from this linearization can be cast in a spatial-dynamics
formulation, $E$, on $(\mathcal{L}^2(0,1))^4$ (Theorem~I in Section~2). In the 
second step, we apply a Galerkin type procedure to $E$ and obtain 
the `truncations'  $E_N$ (Theorem~II in Section~2). We emphasize that these `derivations' of 
$E$ and the $E_N$ are completely formal; no attempts are made in 
the present paper to give the (certainly ill-posed!) `infinite-dimensional
dynamical system' $E$ a rigorous interpretation, to even only formulate
spectral stability at its level, or to show that the $E_N$ approximate
$E$ in a rigorous sense.%
\footnote{But these three questions admittedly are topics of ongoing work.}

It does seem to the author, however, that Theorems~III and~IV would be 
very strange coincidences if the $E_N$ did not, despite the formal nature 
of their deduction in Theorems~I and~II, capture essential
features of the ISW stability problem in the original full-Euler
channel-model setting. In particular, we consider 
Theorem~III as a meaningful characterization of stability properties of internal
solitary 
waves of arbitrary amplitude and Theorem~IV as significant evidence for the stability of internal solitary 
waves of small amplitude.

\medskip

In the following, we recapitulate previous work which has motivated our approach.
Kirchg\"assner proposed what is now, generally, called ``spatial dynamics'' in his study~\cite{Kirchgassner82} of an
elliptic PDE%
\footnote{He did indeed consider a variant of the Dubreil-Jacotin equation that governs ISWs, see Section~1 below.}
posed, as in our context, on a two-dimensional channel.
In this approach, the unbounded spatial variable $-\infty<x<+\infty$ is considered as the ``time'' of a dynamical system
living on some function space. Although ill-posed, this spatial-dynamics formulation permits the application of
dynamical-systems methods, in particular the centre-manifold reduction, and thus yields new insights into the problem; we refer
to~\cite{HaragusIooss11,VanderbauwhedeIooss92,IoossAdelmeyer98} for extensive material on that.

The far-reaching spatial-dynamics approach was later also applied 
in studying the stability of waves.
In~\cite{HaragusScheel02}, Haragus and Scheel used such a formulation
to prove spectral stability of capillary-gravity surface waves.
The idea of our spatial-dynamics approach to stability is close to theirs in principle
but differs at prominent places, notably in the lack of a finite-dimensional centre manifold.

When focussing on internal waves of sufficiently small amplitude, it is well known that
solitary waves are, to leading order, modelled by solitons of the Korteweg-deVries
equation, see~\cite{Benney66,Benjamin66,KirchgassnerLankers93,James97}. These KdV solitons exhibit a remarkable
stability as shown 
by Benjamin~\cite{Benjamin72}, by Bona et al.~\cite{BonaSouganidisStrauss87}, and by Pego and
Weinstein~\cite{PegoWeinstein92,PegoWeinstein94}.
In~\cite{PegoWeinstein92}, Pego and Weinstein developed a unified framework for the stability of solitons for a class of
Hamiltonian PDEs by relating conserved quantities of a given soliton to properties of the Evans function associated with
it and this work proves, in particular, the spectral stability of KdV solitons 
that we will exploit in the small-amplitude limit.

Even though this paper passes only formally by the infinite-dimensional spatial-dynamics setting,
the large body of work by Latushkin and
collaborators towards infinite-dimensional Evans functions (see~\cite{GesztesyLatushkinEtAl08,LatushkinPogan11} and
references therein) has been a prime motivation, and still is.

We finally remark that the interest in internal solitary waves is also indicated by many publications dealing with
algorithms for their quantitative computation (e.g.\ see~\cite{TurkingtonEydelandEtAl91,StastnaLamb02,KingCarrEtAl11}).
These methods make it possible to investigate properties of internal waves numerically. 
It is a stimulating question whether numerically observed instabilities of ISWs, such as those reported in
\cite{StastnaLamb02,CarrKingEtAl11,PreusseFreistuehlerEtAl12}, can be captured by their spectral properties, 
i.e., by the emergence of unstable eigenvalues.

\textbf{Note.} This paper presents central results of the author's PhD thesis~\cite{Klaiber13-Diss}.

\section{Statement of the results}
In the mathematical modelling of internal waves, it is common practice to consider a two-dimensional channel,
\begin{displaymath}
	\mathfrak{C} = \left\{ (x,y): x\in\R, 0< y < 1 \right\},
\end{displaymath}
which is entirely filled with a non-homogeneous, inviscid, incompressible fluid, 
with the density stratification of the fluid at rest being given by a known differentiable function $\bar{\rho}(y)$ satisfying 
$\bar{\rho}(y)>0$ and $\bar{\rho}'(y)<0$ for all $y\in [0,1]$; such a $\bar{\rho}$ is called a \emph{stable
stratification}. 
A prototypical example is given by the exponential stratification, $\bar{\rho}(y)=\e^{-\delta y}$ with some fixed $\delta>0$, to which we
restrict from Thm.~II on.

The motion of the fluid is assumed to be governed 
by the Euler equations%
\footnote{In this paper, we do not enter any issues related to the very interesting general solution theory of the Euler
equations for heterogeneous incompressible fluids. See \cite{BeirValli80,Marsden76}.}%
, 
\begin{subequations}
\label{eq:euler-full}
\begin{align}
	\label{eq:euler-full-2} \rho_t + u\rho_x + v\rho_y &= 0,\\
	\label{eq:euler-full-3} \rho \left( u_t + u u_x+ v u_y \right) &= -p_x,\\
	\label{eq:euler-full-4} \rho \left( v_t + u v_x+ v v_y \right) &= -p_y-g\rho,
	\intertext{complemented by the incompressibility constraint}
	\label{eq:euler-full-1} u_x + v_y &= 0,
\end{align}
\end{subequations}
with $t$, $x$ and $y$ denoting the time, horizontal and vertical position, respectively, whereas the sought functions,
occasionally collected in the vector $U$, are given by density $\rho(t,x,y)$, velocity field $(u(t,x,y),v(t,x,y))$, and
pressure $p(t,x,y)$; the constant $g$ denotes acceleration due to gravity.

The requirement that the fluid cannot leave the domain $\mathfrak{C}$ is encoded in the boundary conditions
\begin{equation}
	\label{eq:boundarycondition}
	v(t,x,0) = 0 \quad \text{and}\quad v(t,x,1) = 0,
\end{equation}
the second of which is often referred to as the \textit{rigid lid} condition expressing the fact that in typical
applications free-surface displacements are negligible. 

In the present setting, the quiescent state $\bar{U}(y) = (\bar{\rho}(y),\bar{u}(y),\bar{v}(y),\bar{p}(y))^\T$ with 
\begin{equation*}
	\bar{u}(y) = 0,\quad \bar{v}(y) =0,\quad \bar{p}(y)=-g\int_{0}^{^y} \bar{\rho}(\eta)\d{\eta}
\end{equation*}
is, in fact, a stationary solution of the Euler equations \eqref{eq:euler-full} and satisfies
\eqref{eq:boundarycondition}.
This permits to precisely define the waves of interest: 
A function $U^c(\xi,y)$ is called an \emph{internal solitary wave (ISW)} of speed $c$ if
\begin{equation*}
	U(t,x,y) = U^c(x-ct,y)
\end{equation*}
is a classical solution of \eqref{eq:euler-full} satisfying \eqref{eq:boundarycondition} and if the wave profile
$U^c(\xi,y)$ tends to the quiescent state $\bar{U}(y)$ uniformly in $y$ as $\abs{\xi}\to\infty$.

The inception of rigorous mathematical investigations on internal waves is indicated by the remarkable observation
due to Dubreil-Jacotin in \cite{Dubreil-Jacotin37} and Long in \cite{Long53}
that travelling wave profiles can be found by solving a single nonlinear elliptic equation (for the stream function),
the \textit{Dubreil-Jacotin-Long equation}, depending parametrically on the wave speed $c$; for a related equation, 
see~\cite{Yih60}. 

Based on these equations, proofs for the existence of periodic and solitary internal travelling waves have been given via different
methods in particular through bifurcation theory for elliptic equations (see \cite{Turner81, Amick84}), 
by the use of various variational principles (see \cite{Benjamin66, BonaBoseEtAl83,TurkingtonEydelandEtAl91,
LankersFriesecke97}), and, for small-amplitude waves, via the spatial-dynamics approach due to Kirchgässner (see \cite{Kirchgassner82,
KirchgassnerLankers93,James97}).

\bigskip
In order to study the stability of ISWs, we consider the Euler eigenvalue problem.
Using the time-exponential perturbation
\begin{equation*}
	(\e^{\kappa t}\rho(\xi,y),
	\e^{\kappa t}u(\xi,y),
	\e^{\kappa t}v(\xi,y),
	\e^{\kappa t}p(\xi,y))^\T
\end{equation*}
in the linearization of \eqref{eq:euler-full} about an
ISW solution $U^c(\xi,y)$,
the eigenvalue problem reads
\begin{subequations}
	\label{eq:evp-euler-full}
	\begin{align}
-\kappa\rho &= (u^c-c)\rho_\xi + v^c\rho_y + u\rho^c_\xi + v\rho^c_y,\\
	-\rho^c \kappa u &=	\rho^c \left( (u^c-c)u_\xi + uu^c_\xi + v^cu_y + vu^c_y \right)\\
\notag	&\quad + \rho \left( (u^c-c)u^c_\xi + v^cu^c_y \right) +p_\xi,\\ 
	-\rho^c \kappa v &= \rho^c \left( (u^c-c)v_\xi + uv^c_\xi + v^cv_y + vv^c_y \right)\\
	\notag &\quad + \rho \left( (u^c-c)v^c_\xi + v^cv^c_y \right)  + p_y + g\rho,\\
	0 &= u_\xi + v_y.
	\end{align}
\end{subequations}
A number $\kappa\in\C$ with $\Re\kappa >0$ is called an \textit{unstable eigenvalue} if this system possesses a
bounded solution for the given $\kappa$. To show spectral stability of the ISWs, we have to exclude unstable
eigenvalues. We are aware that the meaning of \textit{eigenvalue} is vague here as we do not provide a
functional-analytic framework; however, there is a clear meaning for the truncated systems which form the core of
our approach.

The present work is divided into three parts corresponding with Secs.~2, 3, and 4: (1)~establishing, at a formal level, a spatial-dynamics formulation for the eigenvalue
problem, and corresponding finite-dimensional truncations (Thms.~I and~II); 
(2)~showing, rigorously, that the latter are amenable to the Evans function method (Thm.~III);
(3)~proving the absence of unstable modes close to the origin in the small-amplitude limit (Thm.~IV).

Now, to explain our results, let $\psi(\xi,y)$ denote the stream function associated with the
linearized velocity field, 
\begin{equation*}
 \psi_\xi=-v \text{ and } \psi_y=u.
\end{equation*}
We will consider the underlying space
\begin{equation*}
\mathcal{W} = \mathcal{L}^2(0,1)\times \mathcal{L}^2(0,1)\times \mathcal{L}^2(0,1)\times \mathcal{L}^2(0,1)
\end{equation*}
endowed with the scalar product 
\begin{equation}
	\label{eq:scalarproduct}
	\scalarprod{U,V} := \int_{0}^{1} (-\bar{\rho}') \left( \frac{U_1V_1}{(\bar{\rho}')^2} + U_2V_2+U_3V_3 + U_4V_4 \right)
	\d{y}.
\end{equation}
Since we assume $\bar{\rho}'<0$ on the closed interval $[0,1]$, $\scalarprod{\cdot,\cdot}$ is obviously equivalent to the
standard scalar product and hence $\left( \mathcal{W},\scalarprod{\cdot,\cdot} \right)$ is a Hilbert space.
Our fist (formal!) result will be as follows.
\begin{thm}[abbreviated statement]
Given an 
ISW $U^c(\xi,y)$, the associated eigenvalue problem \eqref{eq:evp-euler-full}
can be written as an abstract ordinary differential equation, posed in $\mathcal{W}$,
of the form
\begin{equation}
	\label{eq:evp-spatdyn-intro}
	\tag{E}
	W'(\xi) = \A(\xi;\kappa)W(\xi),
\end{equation}
in which the dependent variable assumes, at ``time'' $\xi$, a value
\begin{equation}
	W(\xi) = \left(\rho(\xi,\cdot),\psi(\xi,\cdot),\psi_\xi(\xi,\cdot),\psi_{\xi\xi}(\xi,\cdot)	\right)^\T\in \mathcal{W} 
	\label{eq:EntriesOfW}
\end{equation}
and the coefficient $\A$ is of the form
\begin{equation}
	\A(\xi;\kappa) = 
	\begin{pmatrix}
		R_1 & R_2 & R_3 & 0\\
		0 & 0 & 1 & 0\\
		0 & 0 & 0 & 1\\
		S_1 & S_2 & S_3 & S_4
	\end{pmatrix},
	\label{eq:EntriesOfA}
\end{equation}
where $R_1,\dots,S_4$ denote appropriate linear operators in $\mathcal{L}^2(0,1)$.
\end{thm}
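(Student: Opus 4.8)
The plan is to carry out, for the linearized system~\eqref{eq:evp-euler-full}, the classical reduction of a plane incompressible-flow problem to a scalar stream-function problem, but now with the horizontal variable $\xi$ in the role of ``time'', and then to read off the coefficient operators $R_1,\dots,S_4$. We work under the (mild) assumption that the given ISW has no stagnation point, i.e.\ $u^c(\xi,y)\neq c$ on $\mathfrak{C}$; this is automatic far from the wave, where $u^c\to 0$, and in particular holds for small-amplitude waves.

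First I would introduce the stream function $\psi$ with $\psi_\xi=-v$, $\psi_y=u$: the incompressibility equation in~\eqref{eq:evp-euler-full} says precisely that the $1$-form $-v\,\d\xi+u\,\d y$ is closed, hence exact on the simply connected strip $\mathfrak{C}$, so such a $\psi$ exists and the unknowns reduce to $(\rho,\psi,p)$. Substituting $u=\psi_y$, $v=-\psi_\xi$ into the density equation of~\eqref{eq:evp-euler-full} and dividing by $u^c-c$ writes $\rho_\xi$ as $R_1\rho+R_2\psi+R_3\psi_\xi$ with differential operators $R_1,R_2,R_3$ in $y$ whose coefficients depend on $\xi$ (through the profile) and on $\kappa$, and with no term in $\psi_{\xi\xi}$; this is the first row of~\eqref{eq:EntriesOfA} and explains the zero in position $(1,4)$.

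Next I would eliminate the pressure. The two momentum equations of~\eqref{eq:evp-euler-full} express $p_\xi$ and $p_y$ explicitly in terms of $(\rho,u,v)$ and the wave profile; subtracting $\partial_\xi$ of the vertical-momentum equation from $\partial_y$ of the horizontal one cancels $p$ and yields a single ``vorticity'' equation. Conversely, on the simply connected strip $p$ is recovered from this equation (together with the density relation) up to an irrelevant additive constant, so the reformulation is a genuine equivalence. After inserting $u=\psi_y$, $v=-\psi_\xi$, the momentum equations are of first $\xi$-order as equations for $u$ and $v$, hence of second $\xi$-order in $\psi$, so that $\partial_\xi$ of the vertical-momentum equation produces the term $\psi_{\xi\xi\xi}$ in the vorticity equation, with coefficient $\pm\rho^c(u^c-c)$, nonzero by the no-stagnation assumption and $\rho^c>0$. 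Dividing by this coefficient solves for $\psi_{\xi\xi\xi}$ in terms of $\rho$, $\rho_\xi$, $\psi$, $\psi_\xi$, $\psi_{\xi\xi}$ and their $y$-derivatives; replacing the lone occurrence of $\rho_\xi$ by $R_1\rho+R_2\psi+R_3\psi_\xi$ yields $\psi_{\xi\xi\xi}=S_1\rho+S_2\psi+S_3\psi_\xi+S_4\psi_{\xi\xi}$, the fourth row of~\eqref{eq:EntriesOfA}. Adjoining the tautologies $(\psi)'=\psi_\xi$ and $(\psi_\xi)'=\psi_{\xi\xi}$ as rows two and three, and taking $W$ as in~\eqref{eq:EntriesOfW}, the four scalar relations assemble into~\eqref{eq:evp-spatdyn-intro} with $\A$ of the stated form. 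The rigid-lid conditions~\eqref{eq:boundarycondition} translate into $\psi_\xi=0$ at $y=0,1$, which --- the statement being formal --- are simply recorded as constraints attached to $\W$ rather than built into operator domains.

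The reduction is conceptually classical, so I expect the only genuine work, and the main place something could go wrong, to be the bookkeeping in the pressure-elimination step: one must verify that the cross-differentiation introduces no $\xi$-derivative of $\psi$ beyond the third and no $\xi$-derivative of $\rho$ beyond the first --- so that, after the density substitution, the system really closes on the four components of $W$ --- and that the coefficient pattern comes out to be exactly~\eqref{eq:EntriesOfA}, with the single zero at $(1,4)$. Tracking the $\xi$- and $y$-dependence of all profile coefficients (and their decay as $|\xi|\to\infty$, needed later rather than here) is routine but lengthy.
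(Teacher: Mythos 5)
Your proposal follows essentially the same route as the paper's proof: introduce the stream function via exactness of $-v\,\d\xi+u\,\d y$ on the simply connected strip, divide the density equation by $u^c-c$ to obtain the first row, eliminate the pressure by the cross-differentiation $\partial_y(\text{horizontal momentum})-\partial_\xi(\text{vertical momentum})$, solve for $\psi_{\xi\xi\xi}$ using the nonvanishing coefficient $\rho^c(u^c-c)$, and substitute $\rho_\xi=R_1\rho+R_2\psi+R_3\psi_\xi$ to close the system. The only cosmetic difference is that you posit the no-stagnation condition $u^c\neq c$ as an extra hypothesis, whereas the paper derives the nonvanishing of $\psi^c_y-c$ from its regularity assumption (A3) via the relation $\rho^c=\bar{\rho}(-\tfrac{1}{c}(\psi^c-cy))$.
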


This theorem is of central importance for our approach 
as it opens the door for 
the theory of dynamical systems.

Since the equation \eqref{eq:evp-spatdyn-intro} represents an abstract ODE on a Hilbert space of infinite dimension, 
we propose to consider finite-dimensional truncations of \eqref{eq:evp-spatdyn-intro}
in the spirit of \cite{GesztesyLatushkinEtAl08,OhSandstede10,LedouxMalhamEtAl09}. 
Having found a suitable Hilbert basis $\mathfrak{B}$ of $\mathcal{W}$, 
we are able to formulate formal Galerkin-type approximants; this is the content of
Thm.~\ref{thm:existenceoftruncations}.

\begin{thm}[abbreviated statement]
There exists a natural sequence of finite-dimensional truncations 
\begin{equation}
	\hat{W}'_N(\xi) = \hat{\A}_N(\xi;\kappa)\hat{W}_N(\xi),\quad \text{for $N=0,1,2,\dots$}
	\label{eq:evp-spatdyn-N-hat}
\end{equation}
of (E) such that the operator $\hat{\A}_N(\xi;\kappa)$ has the following matrix representation in the basis~$\mathfrak{B}$:
\begin{equation*}
  \NA(\xi;\kappa) = 
  \begin{pmatrix}
  \mathcal{A}_{0} + \mathcal{B}_{0,0} & \mathcal{B}_{0,1} & \cdots & \mathcal{B}_{0,N}\\
  \mathcal{B}_{1,0} & \mathcal{A}_{1} + \mathcal{B}_{1,1} & \cdots & \mathcal{B}_{1,N}\\
  \vdots & \vdots & \ddots & \vdots\\
  \mathcal{B}_{N,0} & \mathcal{B}_{N,1} & \cdots & \mathcal{A}_{N} + \mathcal{B}_{N,N}
  \end{pmatrix}
\end{equation*}
where, with $\lambda\equiv \frac{g}{c^2}$,
\begin{equation}
	\mathcal{A}_M = \mathcal{A}_M(\kappa) = 
	\begin{pmatrix}
		\frac{\kappa}{c} & 0 & -\frac{1}{c} & 0\\
		0 & 0 & 1 & 0\\
		0 & 0 & 0 & 1\\
		\delta\lambda\kappa & -\frac{1}{c}\kappa\lambda_M\delta & \delta (\lambda_M-\lambda) &\frac{\kappa}{c}
	\end{pmatrix}
	\label{eq:matrixAM}
	\end{equation}
	and $\mathcal{B}_{ML} = \mathcal{B}_{ML}(\xi;\kappa)$ with 
	\begin{equation*}
 \mathcal{B}_{ML}(\pm\infty;\kappa)=0.
\end{equation*}
\end{thm}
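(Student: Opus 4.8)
\emph{Plan of proof.} The plan is to realise the $\hat{W}_N$ as the Galerkin projections of \eqref{eq:evp-spatdyn-intro} onto the spans of finitely many members of an orthonormal basis $\mathfrak{B}$ of $\W$ that is adapted both to the scalar product \eqref{eq:scalarproduct} and to the exponential stratification $\bar{\rho}(y)=\e^{-\delta y}$. The first task is to exhibit $\mathfrak{B}$. Let $L$ be the vertical operator $L\phi:=-\bar{\rho}^{-1}\big(\bar{\rho}\,\phi'\big)'=-\phi''+\delta\phi'$ on $(0,1)$ with Dirichlet conditions at $y=0$ and $y=1$; for the exponential profile its eigenfunctions are, suitably normalised, $\varphi_M(y)=\sqrt{2/\delta}\,\e^{\delta y/2}\sin\big((M\!+\!1)\pi y\big)$, $M=0,1,2,\dots$, with eigenvalues $\mu_M$, and we put $\lambda_M:=\mu_M/\delta$, so that $L\varphi_M=\delta\lambda_M\varphi_M$. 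A short computation shows that $\{\varphi_M\}_{M\ge0}$ is orthonormal in $\L^2(0,1)$ with respect to the weight $-\bar{\rho}'$, and that the functions $\chi_M:=\bar{\rho}'\varphi_M$ are orthonormal with respect to the weight $1/(-\bar{\rho}')$. Writing $e_1,\dots,e_4$ for the standard basis of $\C^4$ labelling the four slots of $W$ in \eqref{eq:EntriesOfW}, the form of \eqref{eq:scalarproduct} then gives that
\[
 \mathfrak{B}=\big\{\,\chi_M e_1,\ \varphi_M e_2,\ \varphi_M e_3,\ \varphi_M e_4 \ :\ M=0,1,2,\dots\,\big\}
\]
is a Hilbert basis of $(\W,\scalarprod{\cdot,\cdot})$. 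Since $\varphi_M$, $\chi_M$ are smooth and vanish at $y=0,1$, they lie in the domains of the operators $R_1,\dots,S_4$ of Thm.~I, so the truncation below is meaningful.

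For $N=0,1,2,\dots$ let $\W_N:=\spn\{\chi_M e_1,\ \varphi_M e_2,\ \varphi_M e_3,\ \varphi_M e_4:0\le M\le N\}$, a $4(N\!+\!1)$-dimensional subspace of $\W$, let $P_N$ be the orthogonal projection of $\W$ onto $\W_N$, and set $\hat{W}_N(\xi):=P_NW(\xi)$ and $\hat{\A}_N(\xi;\kappa):=P_N\A(\xi;\kappa)P_N$, viewed as an operator on $\W_N$. Discarding the coupling term $P_N\A(\xi;\kappa)(\Id-P_N)W$ turns \eqref{eq:evp-spatdyn-intro} into the finite-dimensional system \eqref{eq:evp-spatdyn-N-hat}; this is the ``natural'' truncation. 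Ordering $\mathfrak{B}$ primarily by the mode index $M$ and secondarily by the slot index, the matrix of $\hat{\A}_N$ becomes an $(N\!+\!1)\times(N\!+\!1)$ array of $4\times4$ blocks, the $(M,L)$-block being $\big(\scalarprod{\A(\xi;\kappa)\,b^{(j)}_L,\,b^{(i)}_M}\big)_{i,j=1}^4$, where $b^{(1)}_M:=\chi_Me_1$ and $b^{(j)}_M:=\varphi_Me_j$ for $j=2,3,4$; this array is the displayed $\NA(\xi;\kappa)$.

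To identify the blocks I split $\A(\xi;\kappa)=\A_\infty(\kappa)+\mathcal{R}(\xi;\kappa)$, where $\A_\infty(\kappa)$ arises from $\A(\xi;\kappa)$ upon replacing the wave profile $U^c$ by the quiescent state $\bar{U}$ in the coefficients $R_1,\dots,S_4$, and $\mathcal{R}(\xi;\kappa):=\A(\xi;\kappa)-\A_\infty(\kappa)$. Since $U^c(\xi,\cdot)\to\bar{U}$ uniformly in $y$ as $\abs{\xi}\to\infty$ and the $R_i,S_j$ depend on $U^c$ only through polynomial and reciprocal expressions whose denominators (essentially $u^c-c$ and $\rho^c$) stay bounded away from zero, $\mathcal{R}(\xi;\kappa)\to0$ as $\abs{\xi}\to\infty$. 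At $\abs{\xi}=\infty$ the eigenvalue problem \eqref{eq:evp-euler-full} has $\xi$-independent coefficients, and bringing it into the form \eqref{eq:evp-spatdyn-intro}---eliminating the pressure from the momentum equations, using $u=\psi_y$, $v=-\psi_\xi$, and invoking the density equation---makes $R^\infty_1,\dots,S^\infty_4$ explicit; in particular $S^\infty_2=-\tfrac{\kappa}{c}L$ and $S^\infty_3=L-\lambda\delta\,\Id$, with $\lambda=g/c^2$, while $R^\infty_1,R^\infty_3,S^\infty_1,S^\infty_4$ are multiplications by functions built from $\bar{\rho}$. Applying these operators to the basis elements and using $L\varphi_M=\delta\lambda_M\varphi_M$, $\bar{\rho}'\varphi_M=\chi_M$, and $\bar{\rho}'/\bar{\rho}=-\delta$, one finds that $\A_\infty(\kappa)$ maps $b^{(j)}_M$ into $\spn\{b^{(i)}_M:1\le i\le4\}$ with scalar $4\times4$ matrix $\mathcal{A}_M(\kappa)$ exactly as in \eqref{eq:matrixAM}; hence $\A_\infty$ is block-diagonal in $\mathfrak{B}$ and $P_N\A_\infty P_N=\diag{\mathcal{A}_0,\dots,\mathcal{A}_N}$. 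Putting $\mathcal{B}_{ML}(\xi;\kappa):=\big(\scalarprod{\mathcal{R}(\xi;\kappa)\,b^{(j)}_L,\,b^{(i)}_M}\big)_{i,j=1}^4$, the matrix of $\hat{\A}_N$ is therefore $\NA(\xi;\kappa)$ with diagonal blocks $\mathcal{A}_M+\mathcal{B}_{MM}$ and off-diagonal blocks $\mathcal{B}_{ML}$, and $\mathcal{B}_{ML}(\pm\infty;\kappa)=0$ follows from $\mathcal{R}(\pm\infty;\kappa)=0$; for finite $\xi$ the $\mathcal{B}_{ML}(\xi;\kappa)$ are well defined because $\mathcal{R}(\xi;\kappa)$ maps the smooth $b^{(j)}_L$ into $\L^2(0,1)$, the profile-dependent coefficients being bounded on $\mathfrak{C}$.

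The argument is almost entirely bookkeeping rather than analysis---the whole construction being formal, nothing is to be shown about convergence or well-posedness---and I expect the single genuinely laborious step to be the explicit reduction of \eqref{eq:evp-euler-full} at $\abs{\xi}=\infty$: one must carry out the pressure elimination and the division by the ($\xi$-independent, nonvanishing) coefficients $u^c-c=-c$ and $\rho^c=\bar{\rho}$ without any slip in the $\bar{\rho}$-weights, and then check that inserting the eigenrelation for $\varphi_M$ together with $\chi_M=\bar{\rho}'\varphi_M$ and $\bar{\rho}'/\bar{\rho}=-\delta$ reproduces \eqref{eq:matrixAM} entry by entry. A subsidiary point worth flagging is that ``$P_N\A P_N$'' is a legitimate but not canonical notion of truncation, because $\A$ contains multiplication by the $y$-dependent function $1/(u^c-c)$, which does not commute with $P_N$; this does no harm, since the only feature used later (in Thms.~III and~IV) is the behaviour of the blocks at $\xi=\pm\infty$, and that is governed by $\A_\infty$, which does respect the modes of $\mathfrak{B}$.
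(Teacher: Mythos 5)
Your proposal follows essentially the same route as the paper: the same Hilbert basis built from the eigenfunctions of the weighted Sturm--Liouville operator (with $\bar\rho'\varphi_M$ in the density slot), the same orthogonal projection $P_N$ with truncation $P_N\A P_N$, and the same splitting $\A=\A^\infty+\B$ into the $\xi$-independent asymptotic part (block-diagonal in the basis, yielding $\mathcal{A}_M$) plus a remainder vanishing at $\xi=\pm\infty$ (yielding the $\mathcal{B}_{ML}$). The only additions are cosmetic --- the explicit formula for $\varphi_M$ and the remark that $P_N\A P_N$ is a choice rather than a canonical object --- and the argument is correct.
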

The representation of system \eqref{eq:evp-spatdyn-N-hat} in the basis~$\mathfrak{B}$ will be denoted by E$_N$.
The treatment of the truncations (E$_N$) by an Evans function approach is considered next.

The \textit{Evans function} is a tool to detect the point spectrum of differential operators. The main idea is
that under certain assumptions eigenvalues of the linearized operator can be found as the roots of this analytic function. 
Originally defined for travelling waves in reaction-diffusion equations, the Evans function was later substantially
extended to cover viscous conservation laws and 
dispersive equations as well, see
\cite{Evans75,AlexanderGardnerEtAl90,PegoWeinstein92,GardnerZumbrun98,FreistuhlerSzmolyan10}
and references therein; we refer to~\cite{Sandstede02} for an extensive introduction. 

That the finite-dimensional ODE systems (E$_N$) are
amenable to standard Evans function theory, is accomplished in Thm.~\ref{thm:existenceofevansfunctions}.

\begin{thm}[abbreviated statement]
Consider a regular ISW of some speed $c>c_0$ and the associated truncated problem \eqref{eq:EN} for an arbitrary $N\in\N$.
Then, there exist an open domain $\Omega=\Omega(N,c)\subset\C$ comprising the closed right half-plane $\Cpluscl$
and an analytic function $D_N:\Omega\to\C$
such that \textnormal{(E$_N$)} has a bounded solution for $\kappa\in\Cplus$ if and only if $D_N(\kappa) = 0$.
\end{thm}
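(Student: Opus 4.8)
The plan is to verify the standard hypotheses of Evans-function theory for the finite-dimensional, non-autonomous linear system (E$_N$), namely: (a) the coefficient matrix $\NA(\xi;\kappa)$ has well-defined limits $\NA(\pm\infty;\kappa)$ as $\xi\to\pm\infty$, with exponentially fast convergence, uniformly for $\kappa$ in compact subsets of an open neighbourhood $\Omega$ of $\Cpluscl$; (b) for every $\kappa\in\Cpluscl$ the asymptotic matrices are \emph{hyperbolic}, i.e.\ have no eigenvalues on the imaginary axis (this is the ``consistent splitting'' already supplied by Thm.~III); and (c) the dimension of the unstable subspace of $\NA(-\infty;\kappa)$ equals the dimension of the stable subspace of $\NA(+\infty;\kappa)$, so that the two boundary-value subspaces have complementary dimensions, summing to $4(N{+}1)$. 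Step~(a) follows from the structure stated in Thm.~II, where $\mathcal{B}_{ML}(\pm\infty;\kappa)=0$ and $\NA(\pm\infty;\kappa)=\diag{\mathcal{A}_0,\dots,\mathcal{A}_N}$; the exponential rate is inherited from the exponential decay of the ISW profile to $\bar U$, which for small-amplitude (KdV-scaled) waves is classical, and more generally is part of the ``regular ISW'' hypothesis. Because $\NA(\pm\infty;\kappa)$ is block diagonal, its spectrum is the union of the spectra of the $\mathcal{A}_M(\kappa)$, so the hyperbolicity and the dimension count reduce to an analysis of each $4\times4$ block $\mathcal{A}_M(\kappa)$ separately — and the same matrix appears in both limits, which immediately makes the stable/unstable dimensions at $+\infty$ and $-\infty$ match, giving~(c) for free once~(b) holds.

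Concretely, I would first compute the characteristic polynomial of $\mathcal{A}_M(\kappa)$ (a quartic in the spectral variable $\mu$, with coefficients polynomial in $\kappa$, $1/c$, $\delta$, $\lambda$, $\lambda_M$) and show that for $\Re\kappa\ge 0$ it has no purely imaginary root; I expect the decisive structural fact to be that $\lambda_M-\lambda<0$ (or whatever sign is forced by the ordering of the eigenvalues $\lambda_M$ of the vertical Sturm–Liouville operator relative to $\lambda=g/c^2$ for $c>c_0$), which is exactly the ingredient that underlies Thm.~III. Having excluded imaginary eigenvalues on $\Cpluscl$, I would determine, by a continuity/count argument (e.g.\ tracking the roots from $\kappa$ large real, where the splitting is transparent, or invoking the sign of the relevant subdeterminants), that each $\mathcal{A}_M(\kappa)$ has exactly two eigenvalues with positive real part and two with negative real part throughout $\Cpluscl$; summing over $M=0,\dots,N$ yields a $2(N{+}1)$-dimensional unstable subspace $\E^u_-(\kappa)$ at $-\infty$ and a $2(N{+}1)$-dimensional stable subspace $\E^s_+(\kappa)$ at $+\infty$. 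These subspaces depend analytically on $\kappa$ on an open set $\Omega\supset\Cpluscl$ (by hyperbolicity they persist under small perturbation of $\kappa$, and analytic dependence follows from the spectral-projection/Riesz-integral formula).

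With the splitting in hand, the Evans function is defined in the usual way: choose analytic bases $\{w_1^-(\xi;\kappa),\dots,w_{2(N+1)}^-(\xi;\kappa)\}$ of the subspace of solutions of (E$_N$) decaying as $\xi\to-\infty$ (obtained from the unstable subspace of $\NA(-\infty;\kappa)$ via the Gap Lemma / stable-manifold-type construction, using the exponential convergence from step~(a)), and likewise $\{w_1^+,\dots,w_{2(N+1)}^+\}$ spanning the solutions decaying as $\xi\to+\infty$, then set
\begin{equation*}
  D_N(\kappa) \;=\; \det\!\bigl(\, w_1^-(\xi_0;\kappa)\ \big|\ \cdots\ \big|\ w_{2(N+1)}^-(\xi_0;\kappa)\ \big|\ w_1^+(\xi_0;\kappa)\ \big|\ \cdots\ \big|\ w_{2(N+1)}^+(\xi_0;\kappa)\,\bigr)
\end{equation*}
at some fixed $\xi_0\in\R$; this is an analytic function on $\Omega$ (independent of $\xi_0$ up to a nonvanishing analytic factor, via Abel's formula) whose zeros in $\Cplus$ are precisely the $\kappa$ for which the spaces of $\pm\infty$-decaying solutions intersect nontrivially, i.e.\ for which (E$_N$) admits a nonzero bounded solution. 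The main obstacle, I expect, is step~(b)–(c): turning the sign condition $c>c_0$ into a clean proof that the quartic for each $\mathcal{A}_M(\kappa)$ stays hyperbolic and splits $2{+}2$ uniformly over all of $\Cpluscl$ and all $M$ — one must rule out eigenvalue collisions on the imaginary axis for \emph{every} admissible $\lambda_M$, which likely requires a careful Routh–Hurwitz-type or winding-number argument rather than a direct factorization; everything downstream (analytic dependence of the subspaces, the Gap Lemma construction of the decaying solution bases, well-definedness of $D_N$) is then standard finite-dimensional Evans-function machinery.
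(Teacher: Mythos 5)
Your overall architecture --- exponential convergence of $\NA(\xi;\kappa)$ to the block-diagonal limit $\NA^\infty(\kappa)=\mathcal{A}_0\oplus\cdots\oplus\mathcal{A}_N$, a spectral splitting of each $4\times4$ block, analytic continuation of the asymptotic subspaces to an open $\Omega\supset\Cpluscl$, and a determinant of analytic bases of the decaying-solution spaces --- is exactly the paper's route. However, two of your concrete intermediate claims are false, and the second would derail the plan as written. First, the splitting is not $2{+}2$ per block: carrying out the very root-tracking you propose (the paper rescales $t=\mu^{-1}$, $k=(\kappa/c)^{-1}$ and applies a Newton-polygon analysis for large real $\kappa$), one finds that each $\mathcal{A}_M(\kappa)$ has exactly \emph{three} eigenvalues of positive real part and \emph{one} of negative real part for $\Re\kappa>0$, so the stable space at $+\infty$ has dimension $N+1$ and the unstable space at $-\infty$ has dimension $3N+3$. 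These are still complementary (they sum to $4N+4$), so the determinant construction survives, but your stated count, and hence the size of the two column blocks in your formula for $D_N$, is wrong. (Also, the relevant sign is $\lambda_M-\lambda>0$, since $\lambda=g/c^2<\lambda_0\le\lambda_M$ for $c>c_0$, not $\lambda_M-\lambda<0$.)

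Second, and more seriously, your hypothesis (b) --- hyperbolicity of $\NA^\infty(\kappa)$ on all of $\Cpluscl$ --- fails on the imaginary axis. The paper's analysis of $p_K(B)=\chi_M(iB;iK)$ shows that for \emph{every} purely imaginary $\kappa=iK$ each block has exactly two purely imaginary eigenvalues (a double one at $\mu=0$ when $\kappa=0$). What rescues the construction is not hyperbolicity but a persistent group separation: the single eigenvalue $\mu^s_n(\kappa)$ keeps strictly negative real part on a neighbourhood of $\Cpluscl$, while the remaining three form a group whose real parts stay strictly to its right even as two of them touch the axis together with $\kappa$. It is this spectral gap between the two \emph{groups} (the paper's Lemma~4) that lets the stable and unstable bundles be continued analytically to $\Omega\supset\Cpluscl$; the paper explicitly remarks that the Gap Lemma is therefore not needed. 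Your plan as stated --- verify that the asymptotic matrix has no imaginary eigenvalues on $\Cpluscl$ --- would simply fail at the boundary, and you would have to replace that step by the group-splitting argument (or genuinely invoke the Gap Lemma, which is heavier machinery than the situation requires).
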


The guiding idea in the background is that sequences of appropriately scaled (`truncated eigen-') functions $\hat{W}_N$ and
Evans functions $D_N$ converge, as $N\to\infty$, to eigenfunctions $\hat{W}$ and an Evans function $D$ of the original
infinite-dimensional problem (E), and $\hat{W}$ yields a solution to \eqref{eq:evp-euler-full}. 
Important as they are, these issues are not yet considered in the present paper. 

Finally, we consider the small-amplitude limit. In that case, we are able to prove the absence of zeros of $D_N$ in a
certain neighbourhood of the origin.
\begin{thm}
	For all $N\in\N$ and any $R_0>0$ there exists some $\varepsilon_0>0$ such that for all $\varepsilon\in(0,\varepsilon_0)$
  the Evans function $D_{N,\varepsilon}(\kappa)$ associated with an ISW of amplitude $\varepsilon^2$ satisfies
	\begin{equation*}
		D_{N,\varepsilon}(0) =0,\quad 	D_{N,\varepsilon}'(0) = 0 
	\end{equation*}
	and
	\begin{equation*}
		D_{N,\varepsilon}(\kappa) \not= 0 \text{ for all $\kappa\in\Cpluscl\setminus\{0\}$ with $\abs{\kappa} < R_0\varepsilon^{3}$.}
	\end{equation*}
\end{thm}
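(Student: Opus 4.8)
The plan is to exploit the known KdV scaling of small-amplitude ISWs together with the Pego–Weinstein stability result for KdV solitons, transferring the latter to the truncated Evans functions $D_{N,\varepsilon}$ via a singular perturbation / rescaling analysis. First I would recall that, to leading order, the ISW profile of amplitude $\varepsilon^2$ is $\varepsilon^2$ times the KdV soliton evaluated at the slow variable $\varepsilon\xi$, with wave speed $c=c_\varepsilon$ perturbed from the linear long-wave speed $c_0$ by an $O(\varepsilon^2)$ amount. Correspondingly I expect the relevant spectral parameter to live on the scale $\kappa\sim\varepsilon^3$ (the cube comes from the KdV dispersion relation $\kappa\sim k^3$ under the slow scaling $\partial_\xi\mapsto\varepsilon\partial_\xi$), which is precisely the scale $|\kappa|<R_0\varepsilon^3$ appearing in the statement. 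So I would introduce rescaled variables $\zeta=\varepsilon\xi$, $\mu=\kappa/\varepsilon^3$, and a correspondingly rescaled dependent variable $V_N$, and show that in these variables the system (E$_N$) converges, as $\varepsilon\to 0$, to the spatial-dynamics form of the \emph{KdV} eigenvalue problem (a finite-dimensional $4(N+1)$ system whose block structure degenerates: the higher modes $M\ge 1$ decouple at leading order because $\lambda_M-\lambda\to$ a nonzero constant, so those blocks are hyperbolic and carry no small eigenvalues, while the $M=0$ block reproduces the KdV linearization).

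The key steps, in order: (1) make the KdV scaling precise, using the small-amplitude existence theory cited (Kirchgässner, James), to write $\mathcal{A}_M(\kappa)$ and $\mathcal{B}_{ML}(\xi;\kappa)$ in the rescaled variables and identify their $\varepsilon\to 0$ limits; (2) show the limiting system splits into a ``KdV block'' and a uniformly hyperbolic complement, and that on the complement the stable/unstable subspaces vary analytically and admit no bounded solutions for $\mu$ in a fixed ball — hence the corresponding factor of $D_{N,\varepsilon}$ is bounded away from zero; (3) identify the KdV-block factor of $D_{N,\varepsilon}$, after rescaling, as a perturbation of the Evans function of the KdV soliton, and invoke Pego–Weinstein \cite{PegoWeinstein92} (together with the translation eigenvalue at $\mu=0$) to conclude that the limiting Evans function has a double zero at $\mu=0$ and no other zeros in $\overline{\mathbb{C}_+}$ with $|\mu|<R_0$; (4) combine: by the (standard) regular dependence of the Evans function on the coefficients, $D_{N,\varepsilon}$ (suitably normalized) converges uniformly on $\{|\mu|\le R_0\}\cap\overline{\mathbb{C}_+}$ to this limit, so by Hurwitz its only zero there, for small $\varepsilon$, is a double zero near $\mu=0$; (5) pin that zero down to be \emph{exactly} at $\kappa=0$ with $D_{N,\varepsilon}'(0)=0$ by a direct argument, namely that $\kappa=0$ always corresponds to the translational mode $W=\partial_\xi U^c_\xi$-type solution (differentiating the profile equation in $\xi$), which lies in the truncation for every $N$ and $\varepsilon$, and that the double-zero order is inherited from the KdV case or checked by the symplectic/Hamiltonian structure of (E$_N$).

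The main obstacle I expect is Step (3)–(4): establishing that the rescaled truncated Evans function really does converge to the KdV Evans function \emph{with the correct multiplicity behaviour near the origin}, uniformly up to $\mu=0$. The difficulty is that $\mu=0$ is where the limiting system loses hyperbolicity (the KdV solitary wave is a homoclinic orbit, so the asymptotic matrices have a zero eigenvalue there), so the convergence of stable/unstable subspaces is delicate exactly at the point of interest, and one must control it uniformly in $\varepsilon$ — this is where most of the technical work will go, presumably via a careful choice of the analytic extension $\Omega(N,c)$ from Theorem~III and gap-lemma-type estimates. A secondary technical point is verifying that the $\varepsilon$-dependent corrections to the wave speed and to the blocks $\mathcal{B}_{ML}$ are genuinely $o(1)$ after rescaling and do not contribute spurious small zeros; this should follow from the explicit form \eqref{eq:matrixAM} and $\mathcal{B}_{ML}(\pm\infty;\kappa)=0$, but needs the small-amplitude asymptotics to the right order.
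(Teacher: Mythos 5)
Your proposal follows essentially the same route as the paper: the scaling $\Xi=\varepsilon\xi$, $\kappa=\varepsilon^{3}\Lambda$, elimination of the hyperbolic higher modes (which the paper implements via a Fenichel centre-manifold reduction rather than your Evans-function factorization, but to the same effect), identification of the reduced $\varepsilon\to 0$ limit with the KdV eigenvalue problem, the Pego--Weinstein result for $D_{\KdV}$, a Rouch\'e/Hurwitz zero-counting argument, and pinning the double zero at $\kappa=0$ via the translation and speed-family modes. The delicate point you flag near $\Lambda=0$ is handled in the paper exactly as you anticipate, through the Pego--Weinstein construction based on the unique simple eigenvalue of smallest real part.
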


\setcounter{thm}{0}

\section{The spatial-dynamics formulation and its truncations}
\label{sec:PfThm1}
In the present section, we prove the announced formulation of the eigenvalue problem as a formal spatial-dynamical
system. 
In the whole section, we systematically disregard questions of regularity, domains and ranges of operators, etc.
However, we make the following precise assumptions on the profile $U^c$.\\
(A1)~Differentiability: The profile satisfies $U^c\in C^3(\mathfrak{C})$.\\
(A2)~Exponential decay: There are constants $C_1,C_2>0$ such that 
\begin{equation*}
	\abs{ \frac{\partial^{\alpha+\beta}}{\partial\xi^\alpha\partial y^\beta}\left( U^c(\xi,y)-\bar{U}(y) \right)} \le C_1\e^{-C_2 \abs{\xi}}
\end{equation*}
for all $\alpha,\beta\in\{0,1,2,3\}$ with $0\le \alpha+\beta\le 3$.\\
(A3)~Monotonicity: Each level set $\{(\xi,y):\rho^c(\xi,y) = \varrho\}$, $\varrho\in\text{range}\bar{\rho}$,
can be written as the graph $y=Y^\varrho(\xi)$ of some differentiable function $Y^\varrho:\R\to[0,1]$.

By a \emph{regular ISW} we mean an ISW satisfying (A1), (A2) and (A3). Note that for a regular ISW
$Y^\varrho(\pm\infty)=\bar{\rho}^{-1}(\varrho)$ because of (A2).
These assumptions are natural to make since they are satisfied for small-amplitude waves (which follows from the
explicit description of such waves, e.g. in \cite{James97}) and since 
the numerical results reported in \cite{TurkingtonEydelandEtAl91,LambWan98} suggest that they indeed hold far beyond the
small-amplitude regime. 

We introduce stream functions $\psi^c$ and $\psi$ associated with the velocity field $(u^c,v^c)$
of the travelling wave and the linearized velocity field $(u,v)$: 
The incompressibility constraint \eqref{eq:euler-full-1} in the problem's original formulation implies that the vector fields $(-v^c,u^c)$ and $(-v,u)$,
defined on the simply-connected domain $\mathfrak{C}$, possess potentials $\psi^c$ and $\psi$ satisfying
the relations
\begin{equation*}
\psi^c_\xi = -v^c,
\psi^c_y = u^c
\quad \text{and} \quad 
  \psi_\xi = -v,
	\psi_y = u
\end{equation*}
and, in view of (A2) and \eqref{eq:boundarycondition}, the boundary conditions
\begin{equation*}
\psi^c(\xi,0) = \psi^c(\xi,1) = 0
\quad \text{and} \quad 
\psi(\xi,0) = \psi(\xi,1) = 0.
\end{equation*}
The use of the stream function is motivated by Benjamin's version of the Euler equations (see \cite[p.\
34ff.]{Benjamin84}).%
\footnote{%
How this formulation underlies the present approach is explained in more detail in the author's PhD
thesis~\cite[Ch.\ 2]{Klaiber13-Diss}.}

As already noted in the introduction, the underlying function space is the Hilbert space
$\mathcal{W} = \left( \mathcal{L}^2(0,1) \right)^4$ with the scalar product $\scalarprod{\cdot,\cdot}$ from \eqref{eq:scalarproduct}.

\begin{thm}
\label{thm:evp-spatdyn}
Given a regular ISW $U^c(\xi,y)$, the associated eigenvalue problem 
\eqref{eq:evp-euler-full}
can formally be written as the abstract ordinary differential equation 
\begin{equation}
	W'(\xi) = \A(\xi;\kappa)W(\xi),
	\tag{E}
	\label{eq:evp-spatdyn}
\end{equation}
in the space $(\mathcal{W},\scalarprod{\cdot,\cdot})$ 
where 
\begin{equation}
	W(\xi) = \left(\rho(\xi,\cdot),\psi(\xi,\cdot),\psi_\xi(\xi,\cdot),\psi_{\xi\xi}(\xi,\cdot)	\right)^\T\in \mathcal{W} 
	\label{eq:EntriesOfW}
\end{equation}
and 
\begin{equation}
	\A(\xi;\kappa) = 
	\begin{pmatrix}
		R_1 & R_2 & R_3 & 0\\
		0 & 0 & 1 & 0\\
		0 & 0 & 0 & 1\\
		S_1 & S_2 & S_3 & S_4
	\end{pmatrix}
	\label{eq:EntriesOfA}
\end{equation}
with 
\begin{equation*}
  R_j =\frac{\tilde{R}_j}{\psi^c_{y}-c} 
  \quad
  \text{and}
  \quad
	S_k = \frac{\tilde{S}_k}{(\psi^c_{y}-c)\rho^c}
\end{equation*}
for $j\in\{1,2,3\}$ and $k\in\{1,2,3,4\}$, where 
\begin{align*}
  \tilde{R}_1 &=-\kappa + \psi^c_{\xi}\partial_y,\\
  \tilde{R}_2 &=-\rho^c_{\xi}\partial_y,\\
  \tilde{R}_3 &= \rho^c_{y},\\
\tilde{S}_1 &= 
\big[-(\psi^c_y-c)\left(\psi^c_{\xi yy}+\psi^c_{\xi\xi\xi} \right) +
		\psi^c_\xi\left(\psi^c_{yyy}+\psi^c_{\xi\xi y}   \right)\\&\quad  - (\psi^c_y-c)^{-1}
		\{g\kappa + \kappa\psi^c_\xi\psi^c_{\xi y}\}+ \kappa\psi^c_{\xi\xi}\big]\\
	&\quad + \big[-(\psi^c_y-c)\psi^c_{\xi y} + \psi^c_\xi\psi^c_{yy} - \psi^c_\xi\psi^c_{\xi\xi} +
		(\psi^c_y-c)^{-1}(g\psi^c_\xi + (\psi^c_\xi)^2\psi^c_{\xi y})\big]	\partial_y,\\
	\tilde{S}_2 &= \big[-\rho^c_y\psi^c_{\xi y}-\rho^c\psi^c_{\xi yy}-\rho^c\psi^c_{\xi\xi\xi} -(\psi^c_y-c)^{-1}
		(g\rho^c_\xi+\rho^c_\xi\psi^c_\xi\psi^c_{\xi y})-\kappa\rho^c_y\big]\partial_y \\	&\quad 
		+ \big[\rho^c_y\psi^c_\xi-\kappa\rho^c\big]\partial_{yy} 
		+ \big[\rho^c\psi^c_\xi\big]\partial_{yyy},\\
	\tilde{S}_3 &=
	\big[\rho^c_y\psi^c_{yy}+\rho^c_\xi\psi^c_{\xi y}+\rho^c\psi^c_{yyy}+\rho^c\psi^c_{\xi\xi
		y}-\rho^c_y\psi^c_{\xi\xi}\\&\quad + (\psi^c_y-c)^{-1}(g\rho^c_y+\rho^c_y\psi^c_\xi\psi^c_{\xi y}) -
		\kappa\rho^c_\xi\big]\\
	&\quad + [-\rho^c_y(\psi^c_y-c)+\rho^c_\xi\psi^c_\xi] \partial_y 
	+ [-\rho^c(\psi^c_y-c)] \partial_{yy},\\
	\tilde{S}_4 &= [-\rho^c_\xi(\psi^c_y-c)-\kappa\rho^c] + [\rho^c\psi^c_\xi] \partial_y.
\end{align*}
\end{thm}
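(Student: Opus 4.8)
The plan is to recast the linearized system \eqref{eq:evp-euler-full} as a first-order evolution in the spatial variable $\xi$ for the four-component unknown $W=(\rho,\psi,\psi_\xi,\psi_{\xi\xi})^\T$. The three ingredients are: eliminate the velocity field in favour of the stream function via $u=\psi_y$, $v=-\psi_\xi$ (so that the incompressibility constraint \eqref{eq:euler-full-1} becomes the identity $\psi_{\xi y}-\psi_{\xi y}=0$ and may be dropped); remove the pressure by cross-differentiating the two momentum equations; and solve the remaining relations for the top $\xi$-derivatives $\rho_\xi$ and $\psi_{\xi\xi\xi}$.

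With the substitution in force, the second and third rows of \eqref{eq:EntriesOfA} are the tautologies $\partial_\xi\psi=\psi_\xi$ and $\partial_\xi\psi_\xi=\psi_{\xi\xi}$. For the first row I would take the linearized mass-conservation equation, which becomes
\begin{equation*}
(\psi^c_y-c)\,\rho_\xi=-\kappa\rho+\psi^c_\xi\rho_y-\rho^c_\xi\psi_y+\rho^c_y\psi_\xi,
\end{equation*}
and divide by $\psi^c_y-c$ — legitimate because for a regular ISW $\psi^c_y-c$ does not vanish on $\mathfrak{C}$ (by (A2) it tends to $-c$ as $\xi\to\pm\infty$, and non-stagnation of the moving-frame flow in the interior is part of the regularity encoded in (A3)) — to obtain exactly $\rho_\xi=R_1\rho+R_2\psi+R_3\psi_\xi$ with $R_j=\tilde R_j/(\psi^c_y-c)$.

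The fourth row is the substantive step. I would apply $\partial_y$ to the linearized $u$-momentum equation and $\partial_\xi$ to the linearized $v$-momentum equation and subtract; since the pressure enters these two equations only as $p_\xi$ and $p_y$, it drops out and one is left with a single ``linearized vorticity'' equation involving $\rho$ (through $\rho$, $\rho_y$, $\rho_\xi$) and $\psi$ (through $\psi$ and its derivatives up to order three). Substituting for $\rho_\xi$ the first-row expression just found removes the remaining $\rho_\xi$, after which one collects the terms according to the component of $W$ they multiply, isolates the unique surviving $\psi_{\xi\xi\xi}$ (whose coefficient is $\pm\rho^c(\psi^c_y-c)$), and divides by $(\psi^c_y-c)\rho^c$ — legitimate since $\rho^c>0$ because $\bar\rho>0$. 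This yields the fourth row with $S_k=\tilde S_k/\big((\psi^c_y-c)\rho^c\big)$ and the tabulated $\tilde S_k$.

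The main obstacle is organizational rather than conceptual: the cross-differentiation together with the back-substitution of $\rho_\xi$ generates many terms, and the work is to check that each is assigned to the correct entry of $W$, that no $\xi$-derivative of $\psi$ higher than the third and no $\xi$-derivative of $\rho$ higher than the first survives, and that the coefficients reduce precisely to the listed operators (in particular that, after substitution, the $\xi$- and $y$-dependence enters only through $\psi^c$, $\rho^c$ and their derivatives). No use of the travelling-wave equations for $U^c$ is needed; the manipulation is purely algebraic, with $U^c$ appearing only as coefficient functions. As announced at the start of the section, questions of function spaces, operator domains, and the precise realization of the $y$-differential operators in $\mathcal{L}^2(0,1)$ are deliberately left aside.
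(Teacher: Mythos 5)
Your proposal follows essentially the same route as the paper: rows two and three are tautologies, the first row comes from dividing the linearized mass equation by $\psi^c_y-c$, and the fourth row is obtained by the cross-differentiation $\partial_y(u\text{-eq})-\partial_\xi(v\text{-eq})$ to remove the pressure, back-substituting $\rho_\xi=R_1W_1+R_2W_2+R_3W_3$, and solving for $\psi_{\xi\xi\xi}$ whose coefficient is $\rho^c(\psi^c_y-c)$. The only point where the paper is more explicit is the non-vanishing of the denominators, which it derives from the relation $\rho^c(\xi,y)=\bar{\rho}\bigl(-\tfrac{1}{c}(\psi^c(\xi,y)-cy)\bigr)$ together with (A3) — so the profile equations do enter there, if only for that justification — but this matches the substance of your appeal to (A3).
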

\begin{proof}
Using $\psi^c(\xi,y)$ and $\psi(\xi,y)$, system \eqref{eq:evp-euler-full} assumes the form
	\begin{subequations}
		\label{eq:evp-euler-full-streamfunction}
		\begin{align}
		\label{eq:evp-euler-full-streamfunction-rho}
			-\kappa\rho &= (\psi^c_y-c)\rho_\xi -\psi^c_\xi\rho_y + \rho^c_\xi\psi_y - \rho^c_y\psi_\xi\\
		\label{eq:evp-euler-full-streamfunction-u}
			-\kappa\rho^c\psi_y &=
			\rho^c \left( (\psi^c_y-c)\psi_{\xi y} + \psi^c_{\xi y} \psi_y -\psi^c_\xi \psi_{yy} -\psi^c_{yy} \psi_\xi\right)\\
			\notag
			&\quad + \rho \left( (\psi^c_y-c)\psi^c_{\xi y} -\psi^c_\xi\psi^c_{yy} \right) +p_\xi,\\
		\label{eq:evp-euler-full-streamfunction-v}
			\kappa \rho^c \psi_\xi &= \rho^c \left( -(\psi^c_y-c)\psi_{\xi\xi} - \psi^c_{\xi\xi} \psi_y 
			+ \psi^c_\xi \psi_{\xi y}+ \psi^c_{\xi y} \psi_\xi\right)\\
			\notag
			&\quad + \rho \left( -(\psi^c_y-c)\psi^c_{\xi\xi} + \psi^c_\xi\psi^c_{\xi y} \right)  + p_y + g\rho.
		\end{align}
	\end{subequations}
	In terms of the variables
	\begin{equation*}
		W_1 = \rho,\; W_2 = \psi,\; W_3 = \psi_\xi,\; W_4 = \psi_{\xi\xi},
	\end{equation*}
	Eq.~\eqref{eq:evp-euler-full-streamfunction-rho}	becomes
	\begin{equation*}
		(\psi^c_{y}-c) W_1' = (-\kappa + \psi^c_{\xi}\partial_y) W_1 - \rho^c_{\xi}\partial_y W_2 + \rho^c_{y} W_3, 
	\end{equation*}
	from which we read off the expressions $R_1,R_2,R_3$.

	To eliminate the pressure from Eqs.~\eqref{eq:evp-euler-full-streamfunction-u},
	\eqref{eq:evp-euler-full-streamfunction-v} we consider the equation 
	``$\partial_y$\eqref{eq:evp-euler-full-streamfunction-u}$-\partial_\xi$\eqref{eq:evp-euler-full-streamfunction-v}''.
	On the left hand side, we obtain
	\begin{equation*}
		\text{LHS} = -\kappa \left( (\rho^c\psi_\xi)_\xi + (\rho^c\psi_y)_y \right)
	\end{equation*}
  and for the right hand side: 
	\begin{align*}
		\text{RHS} 
		&= \rho^c_y\left( (\psi^c_y-c)\psi_{\xi y} + \psi^c_{\xi y}\psi_y-\psi^c_\xi\psi_{yy}-\psi^c_{yy}\psi_\xi\right)\\
		&\quad + \rho^c_\xi\left( (\psi^c_y-c)\psi_{\xi\xi} + \psi^c_{\xi\xi}\psi_y-\psi^c_\xi\psi_{\xi y}-\psi^c_{\xi y}\psi_\xi\right)\\
		&\quad + \rho^c \left( (\psi^c_y-c)\psi_{\xi yy} + \psi^c_{\xi yy}\psi_y-\psi^c_\xi\psi_{yyy}-\psi^c_{yyy}\psi_\xi\right)\\
		&\quad + \rho^c \left( (\psi^c_y-c)\psi_{\xi\xi\xi} + \psi^c_{\xi\xi\xi}\psi_y-\psi^c_\xi\psi_{\xi\xi
		y}-\psi^c_{\xi\xi y}\psi_\xi\right)\\
		&\quad + \rho_y \left( (\psi^c_y-c)\psi^c_{\xi y} -\psi^c_\xi\psi^c_{yy}\right)\\
		&\quad + \rho_\xi \left( (\psi^c_y-c)\psi^c_{\xi\xi} -\psi^c_\xi\psi^c_{\xi y}-g\right)\\
		&\quad + \rho \left[ \left( (\psi^c_y-c)\psi^c_{\xi yy} -\psi^c_\xi\psi^c_{yyy}\right) + 
		\left( (\psi^c_y-c)\psi^c_{\xi\xi\xi} -\psi^c_\xi\psi^c_{\xi\xi y}\right)\right].
	\end{align*}
	By replacing $\rho_\xi = W_1' = R_1W_1 + R_2W_2 + R_3W_3$ and solving for $\psi_{\xi\xi\xi}$,
	we obtain an equation of the form
	\begin{align*}
		\rho^c(\psi^c_y-c)\psi_{\xi\xi\xi} &=
		\left( \tilde{S}_1^0 + \tilde{S}_1^1\partial_y \right) W_1
		+ \left( \tilde{S}_2^1\partial_y + \tilde{S}_2^2\partial_y^2 + \tilde{S}_2^3\partial_y^3 \right) W_2\\
		&\quad + \left( \tilde{S}_3^0 + \tilde{S}_3^1\partial_y^1 + \tilde{S}_3^2\partial_y^2 \right) W_3
		+ \left( \tilde{S}_4^0 + \tilde{S}_4^1\partial_y \right) W_4
	\end{align*}
	where the functions $\tilde{S}_k^j$ are given as follows.
	\begin{align*}
		\tilde{S}_1^0 &= -(\psi^c_y-c)\left(\psi^c_{\xi yy}+\psi^c_{\xi\xi\xi} \right) +
		\psi^c_\xi\left(\psi^c_{yyy}+\psi^c_{\xi\xi y}   \right)\\
		&\quad - (\psi^c_y-c)^{-1}\{g\kappa + \kappa\psi^c_\xi\psi^c_{\xi y}\}+ \kappa\psi^c_{\xi\xi},\\
		\tilde{S}_1^1 &= -(\psi^c_y-c)\psi^c_{\xi y} + \psi^c_\xi\psi^c_{yy} - \psi^c_\xi\psi^c_{\xi\xi} +
		(\psi^c_y-c)^{-1}(g\psi^c_\xi + (\psi^c_\xi)^2\psi^c_{\xi y}),\\ 
		\tilde{S}_2^1 &= -\rho^c_y\psi^c_{\xi y}-\rho^c\psi^c_{\xi yy}-\rho^c\psi^c_{\xi\xi\xi}-(\psi^c_y-c)^{-1}
		(g\rho^c_\xi+\rho^c_\xi\psi^c_\xi\psi^c_{\xi y})-\kappa\rho^c_y,\\
		\tilde{S}_2^2 &= \rho^c_y\psi^c_\xi-\kappa\rho^c,\\
		\tilde{S}_2^3 &= \rho^c\psi^c_\xi,\\
		\tilde{S}_3^0 &= \rho^c_y\psi^c_{yy}+\rho^c_\xi\psi^c_{\xi y}+\rho^c\psi^c_{yyy}+\rho^c\psi^c_{\xi\xi
		y}-\rho^c_y\psi^c_{\xi\xi}\\
		&\quad + (\psi^c_y-c)^{-1}(g\rho^c_y+\rho^c_y\psi^c_\xi\psi^c_{\xi y}) - \kappa\rho^c_\xi,\\
		\tilde{S}_3^1 &= -\rho^c_y(\psi^c_y-c)+\rho^c_\xi\psi^c_\xi,\\
		\tilde{S}_3^2 &= -\rho^c(\psi^c_y-c),\\
		\tilde{S}_4^0 &= -\rho^c_\xi(\psi^c_y-c)-\kappa\rho^c,\\
		\tilde{S}_4^1 &= \rho^c\psi^c_\xi.
	\end{align*}
	This yields the asserted expressions for the operators
	\begin{align*}
	\tilde{S}_1 &= \tilde{S}_1^0 + \tilde{S}_1^1\partial_y,\\
	\tilde{S}_2 &= \tilde{S}_2^1\partial_y + \tilde{S}_2^2\partial_y^2 + \tilde{S}_2^3\partial_y^3,\\
	\tilde{S}_3 &= \tilde{S}_3^0 + \tilde{S}_3^1\partial_y^1 + \tilde{S}_3^2\partial_y^2,\\
	\tilde{S}_4 &= \tilde{S}_4^0 + \tilde{S}_4^1\partial_y.
	\end{align*}
	We finally mention that the operators $R_i$ and ${S}_j$ are not singular.
	As $U^c$ is assumed to be a regular ISW, hypothesis (A1) ensures that $\rho^c$ and $\psi^c$ possess the
	differentiability which is required to make sense of the expressions $R_i$ and $\tilde{S}_j$.
	It is well-known (see \cite[p.\ 98f.]{TurkingtonEydelandEtAl91}) that $\rho^c(\xi,y)$ and $\psi^c(\xi,y)-cy$ are
  related by $\rho^c(\xi,y)=\bar{\rho}(-\frac{1}{c}(\psi^c(\xi,y)-cy))$,
	hence hypothesis (A3) ensures that the denominators do not vanish: $0<\bar{\rho}(1)\le \rho^c(\cdot,\cdot)\le \bar{\rho}(0)$
	and
	${\partial_y}(\psi^c-cy) = -c \bar{\rho}'(-\frac{1}{c}(\psi^c(\xi,y)-cy))^{-1}{\partial_y}\rho^c(\xi,y)\not=0$ (due to local solvability).
\end{proof}

Next, we turn to the derivation of finite-dimensional truncations of \eqref{eq:evp-spatdyn} by formally applying a
Galerkin-type procedure. 
It is well-known (see \textnormal{\cite[p.\ 250ff.]{Benjamin66}}) that the operator 
\label{def:phi0}
	\begin{equation*}
		\frac{1}{\bar{\rho}'} T=\frac{1}{\bar{\rho}'}\partial_y(\bar{\rho}\partial_y):
		H^2(0,1)\cap H^1_0(0,1) \subseteq \mathcal{L}_{-\bar{\rho}'}^2(0,1)\to \mathcal{L}_{-\bar{\rho}'}^2(0,1),
	\end{equation*}
	with zero boundary conditions, is self-adjoint, positive and uniformly elliptic; its spectrum therefore consists of 
	a sequence of real eigenvalues $0<\lambda_0<\lambda_1<\dots \to \infty$ and there 
	exists an orthonormal basis for $\mathcal{L}_{-\bar{\rho}'}^2(0,1)$
	of corresponding eigenfunctions $\{\varphi_n\}_{n\in\N}$, 
	which are normalized and mutually orthogonal, i.e.
	\begin{equation}
		\int_{0}^{1} (-\bar{\rho}') \varphi_n\varphi_m \d{y} = \delta_{nm}.
		\label{eq:scalarprod-vertical}
	\end{equation}
  It is obvious now that the scalar product \eqref{eq:scalarproduct} on the space $\mathcal{W} = \left(
  \mathcal{L}^2(0,1) \right)^4$ is motivated from this scalar product \eqref{eq:scalarprod-vertical}.
  What is more, we can find an explicit, particularly suitable Hilbert basis 
for $\mathcal{W}$ emanating from the set $\{\varphi_n\}_{n\in\N}$ of eigenfunctions of $T$.
For all $N\in\N$, we set
\begin{equation*}
	U_N^1 = \begin{pmatrix} \bar{\rho}' \varphi_N \\ 0 \\ 0 \\ 0	\end{pmatrix},\,
	U_N^2 = \begin{pmatrix} 0 \\ \varphi_N \\ 0 \\ 0	\end{pmatrix},\,
	U_N^3 = \begin{pmatrix} 0 \\ 0 \\ \varphi_N \\ 0	\end{pmatrix},\,
	U_N^4 = \begin{pmatrix} 0 \\ 0 \\ 0 \\ \varphi_N 	\end{pmatrix}.
\end{equation*}
Then, as a direct consequence of $\{\varphi_n\}_{n\in\N}$ being a Hilbert basis for
$\mathcal{L}^2_{-\bar{\rho}'}(0,1)$, the set 
  \begin{equation*}
    \mathfrak{B} := \{U_N^k: k\in\{1,2,3,4\}, N\in\N\}
  \end{equation*}
  forms a Hilbert basis of the Hilbert space $(\mathcal{W},\scalarprod{\cdot,\cdot})$.

For the sake of concreteness, we restrict -- as it was done too in \cite{LankersFriesecke97,BenneyKo78} -- to the
important special case of an exponential stratification,
\begin{equation}
	\bar{\rho} (y) = \exp(-\delta y),\quad \text{with fixed $\delta>0$.}
	\label{eq:rhoisexp}
\end{equation}
The rest of this section serves to prove the following theorem.
\begin{thm}
	\label{thm:existenceoftruncations}
  With respect to the Hilbert basis $\mathfrak{B}$, the infinite-dimensional spatial-dynamics formulation
  \eqref{eq:evp-spatdyn} has, for any $N\in\N$, a formal finite-dimensional truncation
			\begin{equation}
	\begin{pmatrix}
		w_0 \\ w_1 \\ \vdots \\ w_N
	\end{pmatrix}_\xi =
	\NA(\xi;\kappa)
	\begin{pmatrix}
		w_0 \\ w_1 \\ \vdots \\ w_N
	\end{pmatrix},
				\label{eq:EN}
				\tag{E$_N$}
			\end{equation}
			with 
			\begin{equation*}
				\NA(\xi;\kappa) = \begin{pmatrix}
		\mathcal{A}_{0} + \mathcal{B}_{0,0} & \mathcal{B}_{0,1} & \cdots & \mathcal{B}_{0,N}\\
		\mathcal{B}_{1,0} & \mathcal{A}_{1} + \mathcal{B}_{1,1} & \cdots & \mathcal{B}_{1,N}\\
		\vdots & \vdots & \ddots & \vdots\\
		\mathcal{B}_{N,0} & \mathcal{B}_{N,1} & \cdots & \mathcal{A}_{N} + \mathcal{B}_{N,N}
	\end{pmatrix}
			\end{equation*}
			where, with $\lambda\equiv \frac{g}{c^2}$,
			\begin{equation}
				\mathcal{A}_M = \mathcal{A}_M(\kappa) = 
				\begin{pmatrix}
			\frac{\kappa}{c} & 0 & -\frac{1}{c} & 0\\
			0 & 0 & 1 & 0\\
			0 & 0 & 0 & 1\\
			\delta\lambda\kappa & -\frac{1}{c}\kappa\lambda_M\delta & \delta (\lambda_M-\lambda) &\frac{\kappa}{c}
		\end{pmatrix}
		\label{eq:matrixAM}
			\end{equation}
			and $\mathcal{B}_{ML} = \mathcal{B}_{ML}(\xi;\kappa)$ with 
			\begin{equation*}
				 \mathcal{B}_{ML}(\pm\infty;\kappa)=0.
			\end{equation*}
\end{thm}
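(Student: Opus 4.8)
The plan is to perform the formal Galerkin reduction of \eqref{eq:evp-spatdyn} in the Hilbert basis $\mathfrak{B}$ explicitly and then to read off the claimed block structure. First I would expand the unknown, $W(\xi)=\sum_{n\in\N}\bigl(c_n^1(\xi)U_n^1+c_n^2(\xi)U_n^2+c_n^3(\xi)U_n^3+c_n^4(\xi)U_n^4\bigr)$; since each $U_n^k$ has a single nonzero entry this amounts to $W_1=\bar\rho'\sum_n c_n^1\varphi_n$ and $W_j=\sum_n c_n^j\varphi_n$ for $j=2,3,4$. Substituting into \eqref{eq:evp-spatdyn} and taking the scalar product \eqref{eq:scalarproduct} with $U_M^j$ yields the bi-infinite system $(c_M^j)'=\sum_{L\in\N}\sum_{k=1}^{4}\scalarprod{U_M^j,\A(\xi;\kappa)U_L^k}\,c_L^k$. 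The formal truncation \textnormal{(E$_N$)} is then obtained in the usual Galerkin manner --- keeping only indices $M,L\le N$ and collecting $w_M:=(c_M^1,\dots,c_M^4)^\T$ --- so that the $(M,L)$-block of $\NA(\xi;\kappa)$ is exactly $\bigl[\scalarprod{U_M^j,\A(\xi;\kappa)U_L^k}\bigr]_{j,k=1}^{4}$. Everything then reduces to computing these $4\times 4$ blocks.

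Next I would write $\A(\xi;\kappa)=\A_\infty(\kappa)+\bigl(\A(\xi;\kappa)-\A_\infty(\kappa)\bigr)$, where $\A_\infty(\kappa):=\lim_{\xi\to\pm\infty}\A(\xi;\kappa)$; this common limit makes sense because a regular ISW tends to the quiescent state $\bar U$ at $\xi\to+\infty$ and $\xi\to-\infty$ alike, so that $\psi^c\to 0$, $\rho^c\to\bar\rho$, and all $\xi$- and higher $y$-derivatives of $\psi^c,\rho^c$ tend to their quiescent values at both ends. Inserting these limits into $R_1,R_3$ and $S_1,\dots,S_4$ and invoking the exponential stratification \eqref{eq:rhoisexp}, the limiting coefficients become $\xi$-independent; the crucial point is that, because $\bar\rho'/\bar\rho\equiv-\delta$ is constant, the third-order terms drop out and every second-order term in $\A_\infty(\kappa)U_L^k$ can be removed by means of the Sturm--Liouville relation $\partial_y(\bar\rho\,\partial_y\varphi_L)=\lambda_L\bar\rho'\varphi_L$, i.e.\ $\varphi_L''-\delta\varphi_L'=-\delta\lambda_L\varphi_L$. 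Thus $\A_\infty(\kappa)U_L^k$ is, in each slot, a scalar multiple of $\varphi_L$ (of $\bar\rho'\varphi_L$ in the first slot), and, once the $\bar\rho'$-factors built into the weight of \eqref{eq:scalarproduct} and into the basis cancel, orthonormality \eqref{eq:scalarprod-vertical} gives $\scalarprod{U_M^j,\A_\infty(\kappa)U_L^k}=(\mathcal{A}_M(\kappa))_{jk}\,\delta_{ML}$. Doing this row by row --- rows $2$ and $3$ are immediate and reproduce the two entries equal to $1$, while rows $1$ and $4$ require the limits of $R_1,R_3$ and of $S_1,\dots,S_4$ --- produces precisely the matrix $\mathcal{A}_M$ of \eqref{eq:matrixAM}, with $\lambda\equiv g/c^2$ entering through the gravity terms and $\lambda_M$ through the Sturm--Liouville reduction.

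Finally I would set $\mathcal{B}_{ML}(\xi;\kappa):=\bigl[\scalarprod{U_M^j,(\A(\xi;\kappa)-\A_\infty(\kappa))U_L^k}\bigr]_{j,k=1}^{4}$, so that by construction $\NA(\xi;\kappa)$ equals the block-diagonal matrix $\diag{\mathcal{A}_0,\dots,\mathcal{A}_N}$ plus the block matrix with $(M,L)$-block $\mathcal{B}_{ML}(\xi;\kappa)$; it then only remains to check $\mathcal{B}_{ML}(\pm\infty;\kappa)=0$. Here assumption (A2) does the work: the coefficients entering $R_j$ and $S_k$ are smooth functions of $\psi^c,\rho^c$ and their $\xi,y$-derivatives of order at most three together with the factors $(\psi^c_y-c)^{-1}$ and $(\rho^c)^{-1}$, and by (A3) --- via the identity $\rho^c=\bar\rho(-\tfrac1c(\psi^c-cy))$ --- these last two factors remain uniformly bounded and bounded away from their singularities; hence (A2) forces every such coefficient, and with it every matrix element $\scalarprod{U_M^j,(\A(\xi;\kappa)-\A_\infty(\kappa))U_L^k}$, to tend to $0$ --- exponentially, in fact --- as $\xi\to\pm\infty$.

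The step I expect to be the genuine obstacle is not any isolated conceptual point but the bookkeeping in the middle stage: carrying the numerous terms of $\tilde S_1,\dots,\tilde S_4$ through to their $\xi\to\pm\infty$ limits without slips and verifying that, after the Sturm--Liouville reduction and the $\bar\rho'$-cancellations, the couplings between distinct $\varphi$-modes disappear and the diagonal blocks collapse to the clean constant matrices $\mathcal{A}_M$. The one genuinely conceptual ingredient --- worth isolating --- is that it is precisely the exponential profile \eqref{eq:rhoisexp} that makes $\A_\infty(\kappa)$ act diagonally in the eigenbasis $\{\varphi_n\}_{n\in\N}$; for a general stable stratification the limiting Galerkin matrix would fail to be block-diagonal.
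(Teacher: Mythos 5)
Your proposal is correct and follows essentially the same route as the paper: the paper likewise splits $\A=\A^\infty(\kappa)+\B(\xi;\kappa)$, verifies via the Sturm--Liouville relation (for the exponential stratification) that $\A^\infty(\kappa)$ leaves each $X_M=\spn\{U_M^1,\dots,U_M^4\}$ invariant so that the asymptotic part is block-diagonal with blocks $\mathcal{A}_M$, and defines $\mathcal{B}_{ML}$ as the Galerkin matrix elements of $\B$, which vanish at $\xi=\pm\infty$ by (A2); whether one projects first and then splits, as you do, or splits first and then projects, as the paper does, is immaterial. Your closing observation that the exponential profile is what makes $\A^\infty(\kappa)$ act diagonally in the eigenbasis $\{\varphi_n\}$ is accurate and is precisely why the paper restricts to \eqref{eq:rhoisexp} from this theorem onward.
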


\begin{proof}

We define
\begin{equation*}
	X_M := \spn \{ U_M^1, U_M^2,U_M^3, U_M^4 \} \text{  and  } \mathcal{W}_N := \bigoplus_{0\leq M\leq N} X_M,
\end{equation*}
and denote by $Q_N$ the orthogonal projection onto the space $\mathcal{W}_N$. 
Clearly, $\left( \mathcal{W}_N \right)_{N\in\N}$
is an increasing sequence of subspaces $\mathcal{W}_N\subset \mathcal{W}$ of dimensions 
\begin{equation*}
	d_N := 4N+4<\infty.
\end{equation*}
From now on, we fix some $N\in\N$.
Starting from \eqref{eq:evp-spatdyn}, the announced truncated problem \eqref{eq:EN} is obtained in two steps:
In step one, we replace $\mathbb{A}$ by its projected version $Q_N\mathbb{A}Q_N$ and 
in step two, we write down the representation of the projected problem in the basis~$\mathfrak{B}$. 

\textit{Step 1}: We split the linear operator $\A$ in two parts,
\begin{equation}
	\A(\xi;\kappa) = \A^\infty(\kappa) + \B(\xi;\kappa),
	\label{eq:splittingofA}
\end{equation}
with 
\begin{equation*}
	\A^\infty(\kappa) := \lim_{\xi\to\pm\infty} \A(\xi;\kappa)
	=	\begin{pmatrix}
		\frac{\kappa}{c} & 0 & -\frac{\bar{\rho}'}{c} & 0\\
		0 & 0 & 1 & 0\\
		0 & 0 & 0 & 1\\
		-\frac{g\kappa}{c^2 \bar{\rho}} & \frac{\kappa}{c \bar{\rho}} T & -\frac{1}{\bar{\rho}} \left(
		  T -\frac{g}{c^2}\bar{\rho}'\right) & \frac{\kappa}{c}
	\end{pmatrix}.
\end{equation*}
A direct calculation shows that
\begin{equation}
\begin{aligned}
	\A^\infty(\kappa)U_M^1 &= \frac{\kappa}{c}U_M^1 + \delta\lambda\kappa U_M^4,\\
	\A^\infty(\kappa)U_M^2 &= -\delta\lambda_M\frac{\kappa}{c} U_M^4,\\
	\A^\infty(\kappa)U_M^3 &= -\frac{1}{c}U_M^1 + U_M^2 + \delta(\lambda-\lambda_M)U_M^4,\\
	\A^\infty(\kappa)U_M^4 &= U_M^3 + \frac{\kappa}{c} U_M^4,
\end{aligned}
	\label{eq:AinftyUM}
\end{equation}
thus each $X_M$ with $1\le M\le N$ is invariant under $\A^{\infty}(\kappa)$; consequently, $\mathcal{W}_N$ is
$\A^{\infty}(\kappa)$-invariant as well.
As announced above, we construct finite-dimensional versions of \eqref{eq:evp-spatdyn} by 
substituting $Q_N\A Q_N$ for $\A$, so 
instead of \eqref{eq:evp-spatdyn} we now consider 
\begin{equation*}
	Q_NW'(\xi) = Q_N\left( \A^\infty(\kappa)+\B(\xi;\kappa) \right)Q_NW(\xi).
\end{equation*}
Since we know $\A^\infty(\kappa)\mathcal{W}_N\subset \mathcal{W}_N$,
we focus on the following ODE
\begin{equation}
	W_N'(\xi) = \left(\left.\A^\infty(\kappa)\right|_{\Im Q_N} + \left.Q_N \B(\xi;\kappa)\right|_{\Im Q_N} \right)W_N(\xi)
	\label{eq:truncatedproblem-WN}
\end{equation}
for $W_N = Q_NW\in \mathcal{W}_N$. 

\textit{Step 2}: In order to derive \eqref{eq:EN} from \eqref{eq:truncatedproblem-WN}, we expand {$W_N\in \mathcal{W}_N$} in the
basis $\mathfrak{B}$, i.e.\ 
\begin{equation*}
	W_N(\xi) = \sum_{M=0}^N \sum_{k=1}^4 w_M^k(\xi) U_M^k,
\end{equation*}
and 
introduce the notation 
\begin{equation*}
	\mathcal{A}^{lk}_{M} := \scalarprod{\A^\infty U_M^k,U_M^l}, \quad\mathcal{B}^{lk}_{ML} := \scalarprod{\B U_L^k,U_M^l}
\end{equation*}
for $M,L\in\{0,1,\dots, N\}$ and $l,k\in\{1,2,3,4\}$. 
We emphasize that, due to the splitting \eqref{eq:splittingofA}, we have
\begin{equation*}
	\mathcal{A}_{M} = \mathcal{A}_{M}(\kappa) \quad \text{and}\quad \mathcal{B}_{ML} = \mathcal{B}_{ML}(\xi;\kappa),
\end{equation*}
and notably 
\begin{equation*}
	\mathcal{B}_{ML}(\pm\infty;\kappa) = 0.
\end{equation*}
Furthermore, Eq.~\eqref{eq:AinftyUM} implies that
\begin{equation*}
\mathcal{A}_{M} = 
				\begin{pmatrix}
			\frac{\kappa}{c} & 0 & -\frac{1}{c} & 0\\
			0 & 0 & 1 & 0\\
			0 & 0 & 0 & 1\\
			\delta\lambda\kappa & -\frac{1}{c}\kappa\lambda_M\delta & \delta (\lambda-\lambda_M) &\frac{\kappa}{c}
		\end{pmatrix}
\end{equation*}
as claimed in \eqref{eq:matrixAM}.

In terms of this notation, we obtain 
\begin{equation}
	\begin{pmatrix}
		w_0 \\ w_1 \\ \vdots \\ w_N
	\end{pmatrix}_\xi =
	\underbrace{\begin{pmatrix}
		\mathcal{A}_{0} + \mathcal{B}_{0,0} & \mathcal{B}_{0,1} & \cdots & \mathcal{B}_{0,N}\\
		\mathcal{B}_{1,0} & \mathcal{A}_{1} + \mathcal{B}_{1,1} & \cdots & \mathcal{B}_{1,N}\\
		\vdots & \vdots & \ddots & \vdots\\
		\mathcal{B}_{N,0} & \mathcal{B}_{N,1} & \cdots & \mathcal{A}_{N} + \mathcal{B}_{N,N}
	\end{pmatrix}}_{=\NA}
	\begin{pmatrix}
		w_0 \\ w_1 \\ \vdots \\ w_N
	\end{pmatrix}
	\label{eq:EN-2}
\end{equation}
with $w_n = (w_n^0,w_n^1,w_n^2,w_n^3)^\T$.
\end{proof}

Note that the matrix $\NA(\xi;\kappa)$ 
decays -- due to hypothesis (A2) -- 
exponentially fast 
to the constant coefficient matrix
\begin{equation}
	\NA^\infty(\kappa) = \mathcal{A}_{0}(\kappa) \oplus \dots \oplus \mathcal{A}_{N}(\kappa).
	\label{eq:AinftyIsASum}
\end{equation} 
The spectrum of $\NA^\infty(\kappa)$ will be described in Lm.~\ref{lem:eigenvalues}. 

\section{Evans functions for the truncated eigenvalue problems}
\label{sec:TruncatedEvansFunctions}
In the present section, we construct an Evans function
for each of the finite-dimensional truncated versions \eqref{eq:EN}
of the problem (E) 
obtained in the second part of Sec.~2. 
The central result is the following theorem on the existence of Evans functions for detecting growing modes.
\begin{thm}
	\label{thm:existenceofevansfunctions}
  Given a regular ISW of speed $c>c_0$ and any $N\in\N$, there exists a well-defined, analytic Evans function 
	\begin{equation*}
		D_N:\Omega\to\C
	\end{equation*}
  that has the property
  
  \vspace{0.5cm}
  \centerline{\mbox{\eqref{eq:EN} possesses a bounded solution for $\kappa$
  \; iff \; $D_N(\kappa)=0$}}
  \vspace{0.5cm}

  for all $\kappa\in\C$ with $\Re\kappa>0$ and satisfies
	\begin{equation}
		D_N(0) = 0\quad \text{and}\quad D_N'(0) = 0.
		\label{eq:DoubleZeroOfD}
	\end{equation}
\end{thm}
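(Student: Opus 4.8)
The plan is to fit \eqref{eq:EN} into the classical Evans-function framework for linear ODEs $w'=\NA(\xi;\kappa)w$ whose coefficient is entire in $\kappa$ and converges exponentially in $\xi$ to a constant matrix; the two non-routine features are that the asymptotic matrix fails to be hyperbolic along the whole imaginary $\kappa$-axis, and that a double zero at $\kappa=0$ has to be extracted. First I would analyse the asymptotic matrix. Since $\NA^\infty(\kappa)=\mathcal{A}_0(\kappa)\oplus\cdots\oplus\mathcal{A}_N(\kappa)$ by \eqref{eq:AinftyIsASum}, it suffices to treat each $4\times4$ block, whose characteristic polynomial is, by Lemma~\ref{lem:eigenvalues},
\[
  p_M(\mu;\kappa)=\Bigl(\mu-\tfrac{\kappa}{c}\Bigr)^{2}\bigl(\mu^{2}-\delta\lambda_M\bigr)+\tfrac{g\delta}{c^{2}}\,\mu^{2}.
\]
Substituting $\mu=i\beta$ and separating real and imaginary parts, one checks that $p_M(i\beta;\kappa)\neq0$ for all real $\beta$ whenever $\Re\kappa>0$; thus $\NA^\infty(\kappa)$ is hyperbolic on $\Cplus$, i.e.\ consistent splitting holds there. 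Evaluating the root count at a large positive real $\kappa$ — where two roots are $\tfrac{\kappa}{c}\pm i\tfrac{\sqrt{g\delta}}{c}$ and the remaining two are $\approx\pm\sqrt{\delta\lambda_M}$ — and using connectedness of $\Cplus$ together with the absence of crossings of $i\R$, the unstable subspace $E^u(\kappa)$ of $\NA^\infty(\kappa)$ has dimension $3(N+1)$ and the stable subspace $E^s(\kappa)$ dimension $N+1$, for every $\kappa\in\Cplus$; note $3(N+1)+(N+1)=d_N$.

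\textbf{$D_N$ on $\Cplus$.}
Kato's analytic perturbation theory gives holomorphic spectral projections onto $E^u(\kappa)$ and $E^s(\kappa)$, and, since hypothesis (A2) yields $\norm{\NA(\xi;\kappa)-\NA^\infty(\kappa)}\le C\e^{-C_2\abs{\xi}}$ while $\NA(\xi;\kappa)$ is polynomial in $\kappa$, the standard variation-of-constants construction produces holomorphic frames $\{W^-_1,\dots,W^-_{3(N+1)}\}(\xi;\kappa)$ for the space $\mathcal{S}^-(\kappa)$ of solutions of \eqref{eq:EN} decaying as $\xi\to-\infty$ and $\{W^+_1,\dots,W^+_{N+1}\}(\xi;\kappa)$ for the space $\mathcal{S}^+(\kappa)$ of solutions decaying as $\xi\to+\infty$. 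Setting
\[
  D_N(\kappa):=\det\bigl[\,W^-_1(0;\kappa)\,\big|\cdots\big|\,W^-_{3(N+1)}(0;\kappa)\,\big|\,W^+_1(0;\kappa)\,\big|\cdots\big|\,W^+_{N+1}(0;\kappa)\,\bigr]
\]
gives a holomorphic function on $\Cplus$; and since for $\Re\kappa>0$ a solution of \eqref{eq:EN} is bounded on $\R$ iff it decays at both ends (hyperbolicity), $D_N(\kappa)=0$ exactly when $\mathcal{S}^-(\kappa)\cap\mathcal{S}^+(\kappa)\neq\{0\}$, i.e.\ exactly when \eqref{eq:EN} has a bounded solution — the asserted equivalence.

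\textbf{Extension to $\Omega\supseteq\Cpluscl$.}
Along the imaginary axis the situation differs: for $\kappa=i\tau$ with $\tau\neq0$ the real function $\beta\mapsto(\beta-\tfrac{\tau}{c})^{2}(\beta^{2}+\delta\lambda_M)-\tfrac{g\delta}{c^{2}}\beta^{2}$ is positive at $\beta=0$ and negative at $\beta=\tau/c$, so $\NA^\infty(i\tau)$ carries purely imaginary eigenvalues and hyperbolicity is lost. But the exponential convergence (A2) lets the Gap Lemma \cite{GardnerZumbrun98} continue $\mathcal{S}^{\pm}$, hence $D_N$: near each point $i\tau$ of $i\R$ one separates the spectrum of $\NA^\infty(\kappa)$ by a line $\Re\mu=\gamma$ with $\gamma$ slightly negative, so that $-C_2<\gamma$ and the eigenvalue group meeting $i\R$ lies on the side $\Re\mu>\gamma$; this yields a holomorphic continuation of $\mathcal{S}^{\pm}$, and hence of $D_N$, to a complex neighbourhood of $i\tau$. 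These continuations agree with $D_N|_{\Cplus}$ on overlaps and therefore glue to a holomorphic $D_N$ on an open $\Omega\supseteq\Cpluscl$.

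\textbf{The double zero and the main obstacle.}
At $\kappa=0$ the block polynomial becomes $p_M(\mu;0)=\mu^{2}\bigl(\mu^{2}-\delta(\lambda_M-\lambda)\bigr)$ with $\lambda=g/c^{2}$, so (because $c>c_0$ forces $\lambda<\lambda_0\le\lambda_M$) $\NA^\infty(0)$ has the semisimple eigenvalue $0$ of multiplicity $2(N+1)$, cleanly separated from the real branches $\pm\sqrt{\delta(\lambda_M-\lambda)}$. Translational invariance of the travelling-wave equation furnishes a solution $V_0=\partial_\xi W^c$ of \eqref{eq:evp-spatdyn} at $\kappa=0$ (here $W^c=(\rho^c,\psi^c,\psi^c_\xi,\psi^c_{\xi\xi})^\T$) which decays exponentially at both ends by (A2), and the speed-derivative of the profile provides $V_1$ solving the inhomogeneous equation $V_1'=\A(\xi;0)V_1+\partial_\kappa\A(\xi;0)V_0$ — a Jordan chain of length two at $\kappa=0$. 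Transporting this structure to \eqref{eq:EN} gives a bounded solution at $\kappa=0$, whence $\mathcal{S}^-(0)\cap\mathcal{S}^+(0)\neq\{0\}$ and $D_N(0)=0$; inserting the primal and adjoint chains into the classical Wronskian formula for $D_N'$ (as in \cite{PegoWeinstein92,GardnerZumbrun98}) then gives $D_N'(0)=0$. The genuine difficulties I expect are the extension step — verifying, from the explicit polynomials $p_M$, that the Gap-Lemma gap condition persists all along $i\R$ — and, above all, this last step: because the Galerkin projection does not commute with the $\xi$-flow, the modes $V_0,V_1$ of the full linearization are not literally solutions of \eqref{eq:EN}, so the double zero of the truncated Evans function must be obtained by a dedicated argument exploiting the precise block structure of $\NA(\xi;0)$ and $\partial_\kappa\NA(\xi;0)$ (in particular that the kinematic rows $(w^2_M)'=w^3_M$, $(w^3_M)'=w^4_M$ survive truncation exactly). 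That point is the main obstacle.
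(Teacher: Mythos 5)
Your architecture is the same as the paper's: consistent splitting of $\NA^\infty(\kappa)$ on $\Cplus$ obtained blockwise from the characteristic polynomials, analytic continuation of the stable/unstable bundles to an open $\Omega\supset\Cpluscl$, a determinant Evans function from analytic frames, and the double zero at $\kappa=0$ from the Jordan chain $\partial_\xi U^c$, $\partial_c U^c$. Your factored form of $\chi_M$, the observation that a purely imaginary root forces $\kappa\in i\R$, and the large-$\kappa$ count of three unstable and one stable root per block are all correct and in fact tidier than the paper's Newton-polygon computation. However, two steps that you explicitly defer are exactly where the paper does its real work, and at least the first is a genuine gap rather than a routine verification.

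The continuation across the imaginary axis requires more than knowing that hyperbolicity fails there. Whether one invokes the Gap Lemma or not, one must know, for each block and each $\kappa=iK$, \emph{exactly} how the spectrum distributes: precisely two roots per block land on $i\R$ (and these are limits of unstable roots), while one root remains in $\{\Re\mu<0\}$ and one in $\{\Re\mu>0\}$, uniformly along $i\R$. Your intermediate-value argument only yields \emph{at least one} imaginary root; it does not exclude, say, the stable root approaching or crossing the axis, which would destroy both the dimension count and the gap condition for the continued bundles. This is the content of Lm.~\ref{lem:ZerosOnImAxis} (the polynomial $p_K(B)$ has precisely two real roots), whose proof via Budan--Fourier sign counting and a monotonicity argument along the rays $B=\gamma K$ is the substantive labour of the section. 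Once it is available, the paper notes that the spectral gap between the $(N{+}1)$-dimensional stable group and the $(3N{+}3)$-dimensional unstable group never closes on $i\R$ (the touching eigenvalues belong to the unstable group), so the plain Alexander--Gardner--Jones construction suffices and the Gap Lemma is not needed; your Gap Lemma route would also work, but it consumes the same missing spectral input. Concerning the double zero: you correctly flag that the Galerkin projection of the Jordan chain need not solve \eqref{eq:EN}. The paper's Step~3 handles this by constructing $V_1,V_2$ at the level of (E) via Thm.~\ref{thm:evp-spatdyn} and then simply applying the truncation procedure to them; it does not supply the dedicated block-structure argument you call for. So on that point your proposal is no less complete than the paper's proof, but the imaginary-axis spectral lemma is a piece you would have to prove to make the construction of $D_N$ on all of $\Cpluscl$ go through.
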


This theorem is a consequence of the next lemma on the existence of analytic bundles corresponding to stable and
unstable spaces.
For brevity, we let $\mathcal{G}^n_k(\C)$ denote the Grassmannian of all $k$-dimensional subspaces of $\C^n$ and 
consider $\mathcal{G}^n_k(\C)$ with its standard structure as a compact complex-analytic manifold 
(see~\cite[p.\ 193ff.]{GriffithsHarris94}). 
\begin{lem}
  \label{lem:ExOfUandS}
  Consider a regular ISW of some speed $c>c_0$ and the associated truncated problem \eqref{eq:EN} for an arbitrary
			$N\in\N$. Let
			\begin{equation*}
				d_N^{s} := N+1 \text{ and } d_N^{u} := 3N+3.
			\end{equation*}
			Then, there exist an open domain $\Omega=\Omega(N,c)\subset\C$ comprising the closed right half-plane $\Cpluscl$ and
			complex-analytic mappings 
	\begin{align*}
		\mathcal{S}_N &:\Omega\to \mathcal{G}^{d_N}_{d^s_N}(\C), \\
		\mathcal{U}_N &:\Omega\to \mathcal{G}^{d_N}_{d^u_N}(\C), 
	\end{align*}
	such that the following characterization holds 
	for any $\kappa$ in the open right half-plane $\Cplus$ 
	and any solution $w:\R\to \C^{d_N}$ of \eqref{eq:EN}:
	\begin{align*}
	w(0)\in \mathcal{S}_N(\kappa) &\text{ iff } w(+\infty) = 0,\\
	\intertext{and}
	w(0)\in \mathcal{U}_N(\kappa) &\text{ iff } w(-\infty) = 0.
	\end{align*}
\end{lem}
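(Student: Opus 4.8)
The plan is to establish the existence of the analytic subspace bundles $\mathcal{S}_N$ and $\mathcal{U}_N$ by the standard exponential-dichotomy/Grassmannian machinery for linear ODEs with asymptotically constant coefficients, exploiting the block-diagonal structure $\NA^\infty(\kappa)=\bigoplus_{M=0}^N\mathcal{A}_M(\kappa)$ together with the exponential decay $\mathcal{B}_{ML}(\xi;\kappa)\to 0$ from (A2). First I would analyse the constant-coefficient limit: for each $\kappa\in\Cpluscl$ one computes the spectrum of each $4\times4$ block $\mathcal{A}_M(\kappa)$ (this is deferred to Lm.~\ref{lem:eigenvalues}, which I may assume) and shows that, for $\Re\kappa>0$, $\NA^\infty(\kappa)$ is \emph{hyperbolic}, i.e.\ has no eigenvalue on the imaginary axis, with stable subspace of dimension $d_N^s=N+1$ and unstable subspace of dimension $d_N^u=3N+3$. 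The dimension count should follow because each block $\mathcal{A}_M$ contributes (generically) one stable and three unstable spatial eigenvalues — morally the single `$\rho$-transport' direction $\kappa/c$ on the diagonal is balanced against the three `$\psi$-directions' of the third-order operator, and one checks the signs of real parts are stable under perturbation in $M$ and $\kappa$ on $\Cplus$. Hyperbolicity on the \emph{open} half-plane is what the lemma's characterization requires; the closed half-plane is needed only so that $\Omega$ can be taken as an open neighbourhood of $\Cpluscl$ on which the \emph{relevant} spectral projections remain analytic (the splitting may degenerate at isolated boundary points such as $\kappa=0$, but the two invariant subspaces themselves extend analytically).

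Next I would invoke the Roughness/Gap Lemma for exponential dichotomies (see \cite{Sandstede02,AlexanderGardnerEtAl90}): since $\NA(\xi;\kappa)-\NA^\infty(\kappa)=\bigl(\text{off-diagonal }\mathcal{B}_{ML}\text{ plus }\mathcal{B}_{MM}\bigr)$ is integrable in $\xi$ (indeed exponentially small, by (A2)), the variable-coefficient system \eqref{eq:EN} inherits an exponential dichotomy on $[0,\infty)$ and on $(-\infty,0]$ for each such $\kappa$. Define $\mathcal{S}_N(\kappa)$ to be the set of initial data $w(0)$ whose forward solution decays (equivalently, remains bounded on $[0,\infty)$, which by the dichotomy forces decay), and $\mathcal{U}_N(\kappa)$ the analogous backward-decaying set; these have the asserted dimensions $d_N^s$, $d_N^u$ because they agree dimensionally with the stable/unstable subspaces of $\NA^\infty(\kappa)$. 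The characterizations $w(0)\in\mathcal{S}_N(\kappa)\iff w(+\infty)=0$ and $w(0)\in\mathcal{U}_N(\kappa)\iff w(-\infty)=0$ are then immediate from the dichotomy on the half-lines, using that a solution bounded on $[0,\infty)$ in a hyperbolic regime must lie in the stable fibre and hence decays (similarly on $(-\infty,0]$).

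The analyticity of $\kappa\mapsto\mathcal{S}_N(\kappa)$ and $\kappa\mapsto\mathcal{U}_N(\kappa)$ into the Grassmannian is obtained in the usual way: fix a base point $\kappa_0\in\Cpluscl$, use a Riesz/Dunford contour integral around the stable (resp.\ unstable) part of $\mathrm{spec}\,\NA^\infty(\kappa)$ — which stays spectrally separated from the complementary part for $\kappa$ in a neighbourhood — to produce an analytic family of projections, hence an analytic frame for the stable/unstable subspace of $\NA^\infty(\kappa)$; then the stable subspace of the \emph{perturbed} system is the image, under the (analytic in $\kappa$, since \eqref{eq:EN} depends analytically on $\kappa$ and the coefficients decay uniformly) solution operator from $\xi=0$ to $\xi=+\infty$ of the dichotomy projection, which can be realized as a uniformly convergent series of analytic functions of $\kappa$ — e.g.\ via the integral equation for the stable solution subspace — and is therefore analytic as a map into $\mathcal{G}^{d_N}_{d_N^s}(\C)$; symmetrically for $\mathcal{U}_N$ on $(-\infty,0]$. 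Finally one enlarges $\Cpluscl$ to an open $\Omega=\Omega(N,c)$ on which all of the above (spectral separation of $\NA^\infty$, uniform decay estimates on $\mathcal{B}$, convergence of the series) persists; this is possible because each requirement is an open condition and $\Cpluscl$ is closed.

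The main obstacle I expect is \emph{hyperbolicity up to the imaginary axis}, i.e.\ proving that $\NA^\infty(\kappa)$ has no purely imaginary spatial eigenvalue for $\Re\kappa>0$ and controlling what happens as $\Re\kappa\downarrow 0$. Concretely, one must show that for each block $\mathcal{A}_M(\kappa)$ the spatial characteristic equation $\det\bigl(\mu\,\mathrm{Id}-\mathcal{A}_M(\kappa)\bigr)=0$ has exactly one root with $\Re\mu<0$ and three with $\Re\mu>0$ whenever $\Re\kappa>0$, with none on $\Re\mu=0$ — an algebraic but genuinely nontrivial root-location argument in the quartic (after using the explicit form \eqref{eq:matrixAM}), presumably handled by a homotopy in $\kappa$ starting from large real $\kappa$ together with a check that roots cannot cross the imaginary $\mu$-axis while $\Re\kappa>0$. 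Once that spectral gap is in hand, the dichotomy, the dimension count, the decay characterizations, and the Grassmannian analyticity are all standard and follow from the cited literature; and Theorem~\ref{thm:existenceofevansfunctions} is then the immediate corollary obtained by setting $D_N(\kappa):=\mathcal{S}_N(\kappa)\wedge\mathcal{U}_N(\kappa)$ (a Wronskian-type determinant in $\bigwedge^{d_N}\C^{d_N}$), whose analyticity is inherited from that of the two bundles and whose vanishing encodes a nontrivial intersection $\mathcal{S}_N(\kappa)\cap\mathcal{U}_N(\kappa)\neq\{0\}$, equivalently a bounded (two-sided decaying) solution of \eqref{eq:EN}.
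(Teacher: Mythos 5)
Your overall architecture — consistent splitting of $\NA^\infty(\kappa)$ for $\Re\kappa>0$, the Alexander--Gardner--Jones / exponential-dichotomy construction using the exponential decay from (A2), and analyticity of the bundles via spectrally separated projections — is exactly the paper's route, and your proposed treatment of the open half-plane (show a purely imaginary spatial eigenvalue of $\mathcal{A}_M(\kappa)$ forces $\kappa\in i\R$, then count signs at large real $\kappa$) coincides with the proof of Lm.~\ref{lem:consistentsplitting}. The genuine gap is in your last step, the passage from $\Cplus$ to an open $\Omega\supset\Cpluscl$. You assert that the splitting ``may degenerate at isolated boundary points such as $\kappa=0$'' and that the extension follows ``because each requirement is an open condition and $\Cpluscl$ is closed.'' Neither claim is right: hyperbolicity of $\NA^\infty(\kappa)$ fails at \emph{every} point of the imaginary axis, since for $\kappa=iK$ each block $\mathcal{A}_M(iK)$ has exactly two purely imaginary spatial eigenvalues (this is the content of Lm.~\ref{lem:ZerosOnImAxis} on the real roots of $p_K(B)=\chi_M(iB;iK)$). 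And openness of a condition only yields a neighbourhood of a set on which the condition actually \emph{holds}; hyperbolicity does not hold on $i\R$, so no such neighbourhood exists and your Riesz-projection/dichotomy argument cannot be invoked verbatim there.

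What rescues the construction — and what your proposal is missing — is not hyperbolicity but a persistent \emph{spectral gap} between two groups of eigenvalues: per block, the single eigenvalue $\mu^s_n$ keeps strictly negative real part on all of $i\R$, while the three eigenvalues $\mu^{u_1,u_2,u_3}_n$ (two of which sit \emph{on} the imaginary axis when $\kappa\in i\R$) keep real part strictly larger than $\Re\mu^s_n$. This is precisely what Lm.~\ref{lem:ZerosOnImAxis} and Lm.~\ref{lem:eigenvalues} establish, and it is the inequality
\begin{equation*}
  \max_{n}\Re\mu^s_n(\kappa) \;<\; \min_{j,n}\Re\mu^{u_j}_n(\kappa)
\end{equation*}
on a neighbourhood of $\Cpluscl$ — not hyperbolicity — that lets the AGJ construction produce analytic bundles $\mathcal{S}_N$, $\mathcal{U}_N$ of the stated dimensions across the boundary; on that neighbourhood the bundles are then the unique analytic continuations of the genuine stable/unstable data on $\Cplus$ (which is also why the decay characterization is only claimed for $\kappa\in\Cplus$). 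To close the gap you would need to prove this root-location statement for $\kappa\in i\R$ (e.g.\ by showing the real quartic $p_K(B)$ has exactly two real roots, as the paper does via Fourier--Budan and a monotonicity argument) and then define the two spectral groups by the gap rather than by the sign of $\Re\mu$.
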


\noindent For the rest of this section, ${}'$ always denotes the derivative with respect to $\kappa$.
The following observation is crucial for the proof of Lm.~\ref{lem:ExOfUandS}. 
\begin{lem}
	\label{lem:consistentsplitting}
	For any $N\in\N$ and any $\kappa\in\C$ with $\Re\kappa>0$, the matrix 
	\begin{equation*}
		\NA^\infty(\kappa)\in\C^{d_N\times d_N}
	\end{equation*}
	possesses $d_N^{s} := N+1$ eigenvalues with negative real part and $d_N^{u} := 3N+3$ eigenvalues  with positive real part.
\end{lem}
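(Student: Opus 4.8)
We must show that for every $\kappa\in\C$ with $\Re\kappa>0$, the block-diagonal matrix $\NA^\infty(\kappa)=\mathcal{A}_0(\kappa)\oplus\dots\oplus\mathcal{A}_N(\kappa)$ has exactly $N+1$ eigenvalues in the open left half-plane and $3N+3$ in the open right half-plane.

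**Reduction to a single block.** Since $\NA^\infty$ is block-diagonal, its spectrum (with multiplicity) is the union of the spectra of the $4\times 4$ blocks $\mathcal{A}_M(\kappa)$, $M=0,\dots,N$. So it suffices to prove: for each $M$ and each $\kappa$ with $\Re\kappa>0$, the matrix $\mathcal{A}_M(\kappa)$ from \eqref{eq:matrixAM} has exactly one eigenvalue with $\Re<0$ and three with $\Re>0$; summing over $M$ then gives $d_N^s=N+1$ and $d_N^u=3N+3$. Along the way we should note that no eigenvalue lies on the imaginary axis when $\Re\kappa>0$ — this is what makes the splitting \emph{consistent} and is needed for Lm.~\ref{lem:ExOfUandS}.

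**Plan of work.** First I would compute the characteristic polynomial $p_M(\mu;\kappa)=\det(\mu I-\mathcal{A}_M(\kappa))$ explicitly. Because of the companion-like structure of $\mathcal{A}_M$ (rows $2$ and $3$ are $e_3$ and $e_4$), the determinant expands cleanly; one obtains a quartic in $\mu$ whose coefficients are polynomials in $\kappa$, $\lambda$, $\lambda_M$, $\delta$, $c$. I expect it to factor, or at least to be amenable to the substitution $\mu\mapsto \mu - \tfrac{\kappa}{c}$ (note the two diagonal entries $\tfrac{\kappa}{c}$), which should kill the odd-degree asymmetry and reduce the problem to a biquadratic. The natural guess, consistent with the constant-coefficient limit of the stream-function equation, is that the eigenvalues $\mu$ are the roots of something like $(\mu-\tfrac{\kappa}{c})^2 = \kappa^2\lambda_M/c^2 + \text{(lower order)}$, i.e. $\mu = \tfrac{\kappa}{c}\pm\sqrt{\cdot}$ paired up, together with a second pair. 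Second, once the quartic is factored into two quadratics in $\mu$, I would locate the roots of each quadratic as a function of $\kappa\in\Cplus$: for each quadratic of the form $\mu^2 + b(\kappa)\mu + c(\kappa)=0$ I need the signs of the real parts of its two roots. Third, I would verify the count at one convenient base point — say real $\kappa=\kappa_0>0$ large, or $\kappa_0$ small — where the roots can be read off directly (for large real $\kappa$ the dominant balance gives roots near $\tfrac{\kappa}{c}$ and near $\pm\sqrt{\lambda_M}\,\tfrac{\kappa}{c}$-type expressions, whose real parts are transparent). Fourth, I would argue by a continuity/connectedness argument: $\Cplus$ is connected, the eigenvalues depend continuously (indeed analytically, at least locally) on $\kappa$, and no eigenvalue crosses the imaginary axis for $\kappa\in\Cplus$ — the latter being exactly the claim $p_M(i\omega;\kappa)\neq 0$ for $\omega\in\R$, $\Re\kappa>0$, which I would check directly from the factored form (if $\mu=i\omega$ solved one of the quadratics, taking real and imaginary parts would force $\Re\kappa=0$). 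Hence the number of eigenvalues in each open half-plane is constant on $\Cplus$, and the base-point computation fixes it at $(1,3)$ per block.

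**Main obstacle.** The crux is the algebra of the quartic: getting it into a transparent factored or biquadratic form so that the half-plane location of the roots is visible for \emph{all} $\kappa\in\Cplus$ simultaneously, not just for real positive $\kappa$. If the quartic does not factor over the relevant coefficient ring, the fallback is to prove the no-imaginary-axis-root statement $p_M(i\omega;\kappa)\neq 0$ head-on — this reduces, after separating real and imaginary parts, to showing a certain real system in $(\omega,\Re\kappa,\Im\kappa)$ has no solution with $\Re\kappa>0$ — and then invoke continuity from a base point as above; this route avoids factoring but makes the inequality manipulations the delicate part. Either way, positivity of $\lambda_M$ (guaranteed since $\lambda_M$ is an eigenvalue of the positive operator $\tfrac{1}{\bar\rho'}T$) and the sign of $c$ will be the structural facts that make the argument close.

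\medskip
\noindent\textbf{Remark.} The value $c_0$ and the hypothesis $c>c_0$ do not seem to enter this lemma: $\mathcal{A}_M^\infty$ is hyperbolic (no imaginary-axis eigenvalues) for \emph{every} $\kappa$ with $\Re\kappa>0$, regardless of $c$; the restriction $c>c_0$ presumably becomes relevant only in the passage from the asymptotic matrix to the full $\xi$-dependent system in Lm.~\ref{lem:ExOfUandS}.
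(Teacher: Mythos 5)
Your overall architecture --- block reduction, exclusion of imaginary-axis eigenvalues for $\Re\kappa>0$, and a continuity/connectedness argument anchored at a base point with large real $\kappa$ --- is exactly the paper's. But the two computations on which the lemma actually rests are left as intentions, and the route you prefer for the harder one would fail. The characteristic polynomial of $\mathcal{A}_M$ is
\begin{equation*}
\chi_M(\mu;\kappa)=\mu^4-\tfrac{2\kappa}{c}\mu^3+\bigl(\tfrac{\kappa^2}{c^2}+\delta(\lambda-\lambda_M)\bigr)\mu^2+\tfrac{2\lambda_M\delta\kappa}{c}\mu-\tfrac{\kappa^2}{c^2}\lambda_M\delta,
\end{equation*}
and it neither becomes biquadratic under $\mu\mapsto\mu-\kappa/c$ nor factors into two quadratics of the kind you guess: for large real $\kappa$ its four roots behave like $\kappa/c,\ \kappa/c,\ +\sqrt{\lambda_M\delta},\ -\sqrt{\lambda_M\delta}$, which display no pairing of the form $\kappa/c\pm\sqrt{\cdot}$; and the ansatz $(\mu^2-\tfrac{2\kappa}{c}\mu+a)(\mu^2+b)$ forces $b=-\lambda_M\delta$, $a=\kappa^2/c^2$, hence $a+b=\kappa^2/c^2-\lambda_M\delta$, which contradicts the actual $\mu^2$-coefficient unless $\delta\lambda=0$. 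So the entire burden falls on your fallback, which you explicitly defer as ``the delicate part'' --- and that inequality \emph{is} the lemma. The paper closes it with a trick worth knowing: set $\mu=i\beta$ and read $\chi_M(i\beta;\kappa)=0$ as a quadratic $A\hat\kappa^2+iB\hat\kappa-C=0$ in $\hat\kappa=-\kappa/c$ with real coefficients $A=\beta^2+\lambda_M\delta>0$, $B=2\beta^3+2\lambda_M\delta\beta$, $C=\beta^4+\delta(\lambda_M-\lambda)\beta^2$; the computation $4AC-B^2=-4\beta^2(\beta^2+\lambda_M\delta)\delta\lambda\le0$ shows the square root in the quadratic formula is purely imaginary, hence $\hat\kappa\in i\R$, i.e.\ an imaginary eigenvalue can occur only for imaginary $\kappa$.

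The base-point count also needs more than ``dominant balance'': two of the four roots escape to infinity as $\kappa\to+\infty$, so a naive limit loses them. The paper inverts, $t=\mu^{-1}$, $k=c/\kappa$, and applies the Newton polygon to the resulting polynomial: the scaling $t=O(1)$ yields the simple pair $t=\pm(\lambda_M\delta)^{-1/2}+o(1)$ (one of each sign), and the scaling $t=kT$ yields $-(T-1)^2=0$, i.e.\ two roots $t=k+o(k)$ with positive real part; since inversion preserves the sign of the real part, this gives the count of one stable and three unstable eigenvalues per block. Your closing remark is correct: the hypothesis $c>c_0$ plays no role in this lemma; it enters only in Lemma~\ref{lem:ZerosOnImAxis}, where $\lambda_M-\lambda\ge\lambda_M-\lambda_0\ge0$ is used.
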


An immediate consequence of Lm.~\ref{lem:consistentsplitting} is that, for all $\kappa\in\Cplus$, 
the underlying space $\mathcal{W}_N\cong\C^{d_N}$ splits into a direct sum,
\begin{equation}
	\mathcal{W}_N = \mathcal{S}_N^\infty(\kappa) \oplus \mathcal{U}_N^\infty(\kappa),
	\label{eq:SplittingOfCd}
\end{equation}
where $\mathcal{S}_N^\infty(\kappa)$, resp. $\mathcal{U}_N^\infty(\kappa)$, denotes the span of all 
generalized eigenvectors 
associated with eigenvalues of negative, resp. positive, real part of the matrix $\NA^\infty(\kappa)$; 
Lm.~\ref{lem:consistentsplitting} implies
\begin{equation*}
	\dim\mathcal{S}_N^\infty(\kappa) = d_N^{s}\quad\text{and}\quad \dim\mathcal{U}_N^\infty(\kappa) = d_N^{u}.
\end{equation*}
This property, often referred to as \textit{consistent splitting}, is the key requirement for defining an Evans
function for the truncated problems.

\begin{proof}[Proof of Lm. \ref{lem:consistentsplitting}]
	To begin with, we prove that the spectrum of $\NA^\infty(\kappa)$ does not
	intersect the imaginary axis. To see this, we show 
	that the existence of an imaginary eigenvalue of
	$\NA^\infty(\kappa)$ implies $\kappa\in i\R$.
	Since $\NA^\infty(\kappa)$ is block-diagonal (see \eqref{eq:AinftyIsASum}), it suffices to show this
	separately for each block 
	\begin{equation*}
		\mathcal{A}_{M}(\kappa) =
		\begin{pmatrix}
			\frac{\kappa}{c} & 0 & -\frac{1}{c} & 0\\
			0 & 0 & 1 & 0\\
			0 & 0 & 0 & 1\\
			\delta\lambda\kappa & -\frac{1}{c}\kappa\lambda_M\delta & \delta (\lambda_M-\lambda) &\frac{\kappa}{c}
		\end{pmatrix}
	\end{equation*}
	(recall that $\lambda=\frac{g}{c^2}$, and for $\lambda_M$ see paragraph below \eqref{eq:rhoisexp}). 
	The characteristic polynomial of $\mathcal{A}_M$ is given by 
	\begin{equation}
		\chi_M(\mu;\kappa) := \mu^4 - \frac{2\kappa}{c}\mu^3 + \left( \frac{\kappa^2}{c^2}+\delta(\lambda-\lambda_M) \right) \mu^2
		+2\lambda_M\delta\frac{\kappa}{c} \mu - \frac{\kappa^2}{c^2}\lambda_M \delta,
	\label{eq:CharPol}
	\end{equation}
	and setting $\mu=i\beta$ leads to the equation
	\begin{equation*}
		A\hat\kappa^2 + iB \hat{\kappa} - C = 0
	\end{equation*}
	for $\hat\kappa=-\frac{\kappa}{c}$ with 
	\begin{equation*}
		A= \beta^2 + \lambda_M\delta>0,\; B=2\beta^3+2\lambda_M\delta\beta,\; C= \beta^4 + \delta (\lambda_M-\lambda)\beta^2.
	\end{equation*}
	The calculation 
	\begin{align*}
		-B^2+4AC &= -4\beta^2 (\beta^2+\lambda_M\delta)^2 + 4 (\beta^2+\lambda_M\delta)\cdot\beta^2
		(\beta^2+(\lambda_M-\lambda)\delta)\\
		&= -4\beta^2 (\beta^2+\lambda_M\delta)\delta\lambda \le 0
	\end{align*}
	implies that 
	$\hat{\kappa}\in i\R$, hence $\kappa\in i\R$.

	The first part of the proof implies that the dimension of the stable resp. unstable space is the same for all
	$\kappa$ with $\Re\kappa>0$. For determining their exact dimensions, it thus suffices to consider some special choice
	of $\kappa$, and we choose $\kappa\in\R$, $\kappa>0$ sufficiently large.
	To handle this precisely, we introduce $t:=\mu^{-1}$ and $k:=\left( \frac{\kappa}{c} \right)^{-1}$ 
	and, after multiplying \eqref{eq:CharPol} by $-t^4k^2$, obtain
	\begin{equation}
		\lambda_M \delta t^4 -2\lambda_M\delta k  t^3 - \left( 1 + \delta(\lambda-\lambda_M)k^2 \right) t^2 + 2kt -k^2 = 0.
	\label{eq:CharPol-kt}
	\end{equation}
	Using the Newton polygon method, 
  it is possible to provide approximate expressions for
  the roots $t_j=t_j(k)$, $j\in\{1,2,3,4\}$, of \eqref{eq:CharPol-kt}. 
  The underlying idea is to introduce a rescaling $t = k^\gamma T$ with a suitable
  exponent $\gamma$ which has to be chosen such that the equation resulting from \eqref{eq:CharPol-kt} after
  rescaling and cancelling the highest common power of $k$ retains at least two summands in the leading order.
  A systematic way to find the appropriate exponents is offered by the Newton polygon method (see \cite[Ch.\
  2.8]{ChowHale82}). For \eqref{eq:CharPol-kt}, however, this can also be accomplished directly and yields
	$\gamma\in \{0,1\}$.
	
	In the first case, $\gamma=0$, set $k=0$ in \eqref{eq:CharPol-kt} to obtain the equation
	\begin{equation}
		\lambda_M\delta t^4 - t^2 = 0
		\label{}
	\end{equation}
	which possesses the roots
	\begin{equation}
		t_{1,2}(0) = 0\quad \text{and}\quad t_{3,4}(0) = \pm \frac{1}{\sqrt{\lambda_M\delta}}.
		\label{}
	\end{equation}
	Since the roots $t_{3,4}(0)$ are simple, they persist under perturbations, hence there are two zeros
	$t_{3,4}(k)$ of \eqref{eq:CharPol-kt} with approximate expressions
	\begin{equation}
		 t_{3,4}(k) = \pm \frac{1}{\sqrt{\lambda_M\delta}} + o(1).
		\label{eq:roots-34}
	\end{equation}
	As $t_{1,2}(0)=0$, we cannot infer the sign of $t_{1,2}(k)$.

	In the second case, $\gamma=1$, we plug the ansatz $t=k T$ into \eqref{eq:CharPol-kt}, and cancel the common factor
	$k^2$. Setting $k=0$ yields
	\begin{equation*}
		-(T-1)^2 = 0,
	\end{equation*}
	thus $T_{1,2}(0)=1$ have positive real part; 
	as $T_{1,2}(k)$ are continuous with respect to $k$
	they have positive real part as	well.
	Turning back to the original variables, this means
	\eqref{eq:CharPol-kt} has two roots $t_{1,2}(k)$ close to the origin with approximate expressions
	\begin{equation}
		t_{1,2} = k + o(k).
		\label{eq:roots-12}
	\end{equation}
	Since $k>0$ is supposed to be small, the formulas \eqref{eq:roots-34} and \eqref{eq:roots-12} imply that there are
	three roots ($t_{1,2,3}$) with positive real part and one root ($t_4$) with negative real part, as it was claimed.
\end{proof}
Next, we investigate the zeros of $\chi_M(\mu;\kappa)$ for $\kappa\in i\R$. For this purpose, we introduce $\kappa=iK$
and $\mu=iB$, with $K\in\R$, and consider in the following the real polynomial $p_K(B) := \chi_M(iB;iK)$,
	\[p_K(B) = B^4 + 2KB^3 + (K^2+\delta(\lambda_M-\lambda))B^2 + 2\lambda_M\delta KB + \delta\lambda_M
	K^2.\]
\begin{lem}
	The polynomial $p_K(B)$
	has precisely two real roots for any $K\in\R$, which are distinct for $K\not=0$.
	\label{lem:ZerosOnImAxis}
\end{lem}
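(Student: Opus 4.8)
The plan is to factorise $p_{K}$ explicitly and so reduce the statement to an elementary study of two scalar functions. Completing the square in the leading three terms, $B^{4}+2KB^{3}+K^{2}B^{2}=B^{2}(B+K)^{2}$, and collecting the rest, one checks at once that
\[
  p_{K}(B)=(B+K)^{2}\bigl(B^{2}+\lambda_{M}\delta\bigr)-\lambda\delta\,B^{2}.
\]
Put $g(B):=\sqrt{\lambda\delta}\,B\,(B^{2}+\lambda_{M}\delta)^{-1/2}$, so that $\lambda\delta\,B^{2}=(B^{2}+\lambda_{M}\delta)\,g(B)^{2}$; then the identity turns into $p_{K}(B)=(B^{2}+\lambda_{M}\delta)\bigl((B+K)^{2}-g(B)^{2}\bigr)$, i.e., with $\phi_{\pm}(B):=-B\pm g(B)$,
\[
  p_{K}(B)=\bigl(B^{2}+\lambda_{M}\delta\bigr)\bigl(K-\phi_{+}(B)\bigr)\bigl(K-\phi_{-}(B)\bigr).
\]
Since $B^{2}+\lambda_{M}\delta>0$ for real $B$, the real roots of $p_{K}$ are precisely the real solutions of $\phi_{+}(B)=K$ together with those of $\phi_{-}(B)=K$.

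Next I would show that $\phi_{+}$ and $\phi_{-}$ are strictly decreasing bijections of $\R$. Differentiation gives $g'(B)=\lambda_{M}\delta\sqrt{\lambda\delta}\,(B^{2}+\lambda_{M}\delta)^{-3/2}>0$, with maximum $g'(0)=\sqrt{\lambda/\lambda_{M}}$; since (by $c>c_{0}$) $\lambda=g/c^{2}<\lambda_{0}\le\lambda_{M}$, this maximum is $<1$, so $\phi_{\pm}'(B)=-1\pm g'(B)<0$ for all $B$. As $g$ is bounded ($\lvert g(B)\rvert<\sqrt{\lambda\delta}$), $\phi_{\pm}(B)\to\mp\infty$ as $B\to\pm\infty$, so each $\phi_{\pm}$ maps $\R$ bijectively onto itself and each of $\phi_{+}(B)=K$, $\phi_{-}(B)=K$ has exactly one real solution, $B^{+}_{K}$ and $B^{-}_{K}$ respectively. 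Thus $B^{+}_{K}$ and $B^{-}_{K}$ are the only real roots of $p_{K}$.

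It then remains to treat distinctness and multiplicity. One has $\phi_{+}(B)=\phi_{-}(B)$ iff $g(B)=0$ iff $B=0$, and $\phi_{\pm}(0)=0$; hence $B^{+}_{K}=B^{-}_{K}$ exactly when $K=0$, and for $K\ne0$ the two roots $B^{+}_{K}\ne B^{-}_{K}$ are distinct. For $K\ne0$ each is moreover a \emph{simple} root, because in the product representation the factor $K-\phi_{+}(B)$ has a simple zero at $B^{+}_{K}$ (since $\phi_{+}'\ne0$) while $B^{2}+\lambda_{M}\delta$ and $K-\phi_{-}(B)$ are nonzero there, and symmetrically at $B^{-}_{K}$; consequently the remaining two roots of the quartic $p_{K}$ form a complex-conjugate pair. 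For $K=0$ one has directly $p_{0}(B)=B^{2}(B^{2}+\lambda_{M}\delta)$, with double real root $B=0$ and non-real pair $\pm i\sqrt{\lambda_{M}\delta}$. I anticipate no real difficulty here: everything follows once the factorisation $p_{K}(B)=(B^{2}+\lambda_{M}\delta)(K-\phi_{+}(B))(K-\phi_{-}(B))$ is in hand, and the only place a hypothesis is used is the estimate $g'(0)=\sqrt{\lambda/\lambda_{M}}<1$, resting on $\lambda<\lambda_{M}$, i.e.\ on $c>c_{0}$; this is genuinely necessary, since for $\lambda>\lambda_{M}$ already $p_{0}$ would have four real roots.
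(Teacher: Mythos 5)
Your factorisation is correct -- expanding $(B+K)^2(B^2+\lambda_M\delta)-\lambda\delta B^2$ does reproduce $p_K$ exactly -- and the rest of the argument (strict monotonicity of $\phi_\pm$ from $g'(B)\le g'(0)=\sqrt{\lambda/\lambda_M}<1$, surjectivity from boundedness of $g$, coincidence of the two roots iff $K=0$, simplicity for $K\neq0$) is sound. The only place you use a hypothesis is $\lambda<\lambda_M$, which indeed follows from $c>c_0$ exactly as the paper itself uses it ($\lambda<\lambda_0\le\lambda_M$). One small slip: for $K=0$ the polynomial is $p_0(B)=B^2\bigl(B^2+\delta(\lambda_M-\lambda)\bigr)$, not $B^2(B^2+\lambda_M\delta)$; this does not affect the conclusion since $\delta(\lambda_M-\lambda)>0$ still forces the non-real pair. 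Your route is genuinely different from the paper's. The paper first reduces to $K\ge0$ by the symmetry $(B,K)\mapsto(-B,-K)$, then locates two simple roots via a Fourier--Budan sign-change table at the points $0$, $K/2+\varepsilon$, $K$, $B^\ast$, and finally excludes further roots by showing $\frac{\d}{\d K}p_K(\gamma K)>0$ along rays $B=\gamma K$, $0<\gamma\le\tfrac12$, combined with a small-$K$ inflection-point argument. Your completed-square factorisation $p_K=(B^2+\lambda_M\delta)(K-\phi_+(B))(K-\phi_-(B))$ replaces all of this by the monotonicity of two explicit scalar functions; it yields in one stroke the count, the simplicity, the coalescence at $K=0$, and the precise role of the hypothesis $c>c_0$ (and shows its sharpness, since $\lambda>\lambda_M$ would give $p_0$ four real roots). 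The paper's method is more generic -- it would survive perturbations of the coefficients that destroy the exact factorisation -- whereas yours is shorter, self-verifying, and avoids the somewhat delicate bookkeeping of the sign table.
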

\begin{proof}
  \textit{Step 1}: The transformation $(B,K)\mapsto (-B,-K)$ leaves the polynomial invariant. Thus, it suffices to consider
	$K\geq 0$. For $K=0$ the polynomial $p_0(B) = B^4+ \delta(\lambda_M-\lambda)B^2$ has precisely two real roots, namely
	$B_{1,2} = 0$ (since $\lambda_M-\lambda>\lambda_M-\lambda_0\ge 0$). 
	Thus, it suffices to treat the case $K>0$ in the rest of the proof.

  \textit{Step 2}: We show that the polynomial has two roots.
	Since all the coefficients of $p_K(B)$ are positive, any root is negative. Let us consider the signs of
	$p_K(B)$ and its derivatives at special values of $B$ in order to apply the Fourier-Budan theorem (see
	\cite{Barbeau89} for details).
	\begin{table}[bt]
		\centering
		\begin{tabular}{c|ccccc|c}
			$B$ & $p_K(B)$ & $p'_K(B)$ &$p''_K(B)$ &$p'''_K(B)$ &$p''''_K(B)$ & no. of changes\\
			\hline
			0	& + & + & + & + & + & 0\\
			$K/2+\varepsilon$	& + & + & $\pm$ & $-$ & + & 2\\
			$K$	& $-$ & + & + & $-$ & + & 3\\
			$B^\ast$	& + & $-$ & + & $-$ & + & 4\\
		\end{tabular}
		\vspace{0.5cm}
		\caption{Number of sign changes at some distinguished points (with any sufficiently small $\varepsilon>0$ and some
      sufficiently large \mbox{$B^\ast>0$})}
		\label{tab:signchanges}
	\end{table}
	According to Table~\ref{tab:signchanges}, the difference in the number of sign changes between the points $B^\ast$ and $K$ and
	between the points $K$ and $K/2+\varepsilon$ is one in either case. This implies that each of the intervals $(K,B^\ast)$
	and $(K/2+\varepsilon,K)$ contains precisely	one simple zero	for any $K>0$. Since $\varepsilon>0$ was arbitrary, we can even
	conclude that $(K/2,K)$ contains precisely one simple zero.

  \textit{Step 3}: In this final step, we exclude further roots of $p_K(B)$. For sufficiently small $K>0$, the polynomial has no
	inflection point (since $p''_K(B)$ does not have a real zero), hence $p_K(B)$ has at most two real roots, and we are
	done in this case. 
	This means we find some $K_0>0$ such that $p_{K_0}(B)>0$ for all $B\in [0,K_0/2]$; $K_0$ will be used later.
	For arbitrary $K>0$, we show that $p_K(B)>0$ for all $B\in [0,K/2]$.
	To this end, we consider the derivative of $p_K(B)$, now viewed as a function of the two variables $B$ and $K$,
	with respect to $K$ along rays $B=\gamma K$ with $0<\gamma<\frac{1}{2}$. We find
	\begin{align*}
		\frac{\d}{\d K}p_K(\gamma K) &= \gamma\frac{\partial}{\partial B}p_K(\gamma K) + \frac{\partial}{\partial K}p_K(\gamma K)\\
		&= \gamma\left[4(\gamma K)^3-6K(\gamma K)^2 + 2 (K^2+\delta(\lambda_M-\lambda))\gamma K - 2\lambda_M\delta K
		\right]\\
		&\quad +\left[2( (\gamma K)^2+\delta\lambda_M)(1-\gamma)K   \right]\\
		&= 4\gamma^2(\gamma-1)^2K^3 + 2\delta (\gamma(\lambda_M-\lambda) + (1-2\gamma)\lambda_M)K,
	\end{align*}
	hence 
	\begin{equation*}
		\frac{\d}{\d K}p_K(\gamma K) > 0
	\end{equation*}
	for $K>0$ and $\gamma\in(0,1/2]$.
	Consequently, we can state for any fixed $K>0$ and any $B$ with $0<\frac{B}{K}\le \frac{1}{2}$ that
	\begin{equation*}
		p_K(B) = p_K\left(\frac{B}{K}K\right) > p_{K_0}\left(\frac{B}{K}K_0\right)
		>0.
	\end{equation*}
	Lastly, as $p_K(0) = \delta\lambda_MK^2>0$, we actually find  $p_K(B)>0$ for all $B\in [0,K/2]$.
	This means $p_K$ has no zero in the interval $(0,K/2)$. Together
	with the result from Step 2, we have thus shown that $p_K$ possesses exactly two real roots, which are simple in the
	case of	$K\not=0$.
\end{proof}

In the following lemma,
we introduce a suitable notation for the eigenvalues of $\NA^\infty(\kappa)$ and 
merge the results of Lm.~1 and Lm.~2 to obtain a statement on their behaviour for all $\kappa$ in some open domain
containing $\Cpluscl$.
\begin{lem}
	\label{lem:eigenvalues}
	For any $c>c_0$ and any fixed $N\in\N$ the following holds with some open domain $\Omega = \Omega(N,c) \supset \Cpluscl$:
	The $4N+4$ eigenvalues of $\NA^\infty(\kappa)$ can be sorted as continuous functions
	\begin{align*}
		\mu^{s,u_1,u_2,u_3}_n:\Omega\to \C, \quad \kappa \mapsto \mu^{s,u_1,u_2,u_3}_n(\kappa)\quad \text{for any $n\in\{0,\dots,N\}$,}
	\end{align*}
	such that the relations
	\begin{equation*}
		\Re\mu^s_n(\kappa) < 0 \quad \text{and}\quad \Re\mu^{u_3}_n(\kappa) > 0
	\end{equation*}
	 and
	\begin{equation*}
		\sign \Re\mu^{u_1,u_2}_n(\kappa) = \sign \Re\kappa
	\end{equation*}
	are true for any $\kappa\in\Omega$.
\end{lem}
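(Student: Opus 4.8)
The plan is to reduce everything to a single block. Since $\NA^\infty(\kappa)=\mathcal{A}_0(\kappa)\oplus\dots\oplus\mathcal{A}_N(\kappa)$ is block-diagonal, it suffices to sort, for each fixed block index $M\in\{0,\dots,N\}$, the four roots of the characteristic polynomial $\chi_M(\cdot;\kappa)$ into continuous branches $\mu^s_M,\mu^{u_1}_M,\mu^{u_2}_M,\mu^{u_3}_M$ on a common neighbourhood $\Omega$ of $\Cpluscl$, with the asserted sign behaviour. The sign information is essentially already available: the proof of Lm.~\ref{lem:consistentsplitting} shows that for $\Re\kappa>0$ the polynomial $\chi_M(\cdot;\kappa)$ has exactly one root with negative and three with positive real part and no root on the imaginary axis; the elementary identity $\chi_M(-\mu;-\kappa)=\chi_M(\mu;\kappa)$ yields the mirror count (one positive, three negative) for $\Re\kappa<0$; Lm.~\ref{lem:ZerosOnImAxis} gives, for $\kappa=iK$ with $K\neq0$, exactly two purely imaginary roots, which are distinct; and since the coefficients of $\chi_M(\cdot;\kappa)$ are real polynomials in $\kappa$, the root set for $\kappa\in i\R$ is invariant under $\mu\mapsto-\bar\mu$, so the two non-imaginary roots form a pair $\{\mu_0,-\bar\mu_0\}$ with one root in each open half-plane.

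Using this I would fix the labels first on $\Cpluscl$. Let $\mu^s_M(\kappa)$ be the unique root of $\chi_M(\cdot;\kappa)$ with negative real part; by the counts above this root exists and is unique for every $\kappa\in\Cpluscl$ (for $\kappa=0$ one has $\chi_M(\mu;0)=\mu^2\bigl(\mu^2+\delta(\lambda-\lambda_M)\bigr)$, with roots the double root $0$ and the simple real roots $\pm\sqrt{\delta(\lambda_M-\lambda)}$, so that $\mu^s_M(0)=-\sqrt{\delta(\lambda_M-\lambda)}$). As it never coincides with another root on $\Cpluscl$, it is a simple eigenvalue there and hence extends analytically to an open neighbourhood of $\Cpluscl$, which may be shrunk so that $\Re\mu^s_M<0$ persists. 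Symmetrically, near $i\R$ the unique root with positive real part defines an analytic branch $\mu^{u_3}_M$ with $\mu^{u_3}_M(0)=+\sqrt{\delta(\lambda_M-\lambda)}$; since no eigenvalue of $\mathcal{A}_M(\kappa)$ meets the imaginary axis when $\Re\kappa>0$, this branch continues through $\Cplus$ with $\Re\mu^{u_3}_M>0$ throughout. The remaining two roots $\mu^{u_1}_M,\mu^{u_2}_M$ are then, for $\Re\kappa>0$, the two further roots with positive real part, on $i\R\setminus\{0\}$ the two distinct purely imaginary roots, and (within the neighbourhood, for $\Re\kappa<0$) two roots with negative real part --- that is, precisely $\sign\Re\mu^{u_1,u_2}_M=\sign\Re\kappa$. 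Factoring $\mu^s_M$ and $\mu^{u_3}_M$ out of $\chi_M$ exhibits $\mu^{u_1}_M,\mu^{u_2}_M$ as the two roots of a monic quadratic with coefficients analytic in $\kappa$.

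The delicate point, which I expect to be the main obstacle, is the global continuity of this labelling, equivalently the absence of branch points of square-root type on $\Cpluscl$. The only genuine coincidence of roots on $\Cpluscl$ is the double root $0$ at $\kappa=0$; there one checks that the corresponding eigenvalue of $\mathcal{A}_M(0)$ is semisimple (its kernel is two-dimensional, spanned by the first two coordinate directions) and that the leading perturbation in $\kappa$ is nondegenerate: inserting $\mu=r\kappa$ into $\chi_M$ and letting $\kappa\to0$ gives $(\lambda-\lambda_M)r^2+(2\lambda_M/c)\,r-\lambda_M/c^2=0$, whose discriminant $4\lambda_M\lambda/c^2$ is positive and whose roots $r_\pm=\bigl(\lambda_M\mp\sqrt{\lambda_M\lambda}\bigr)\big/\bigl(c(\lambda_M-\lambda)\bigr)$ are real, distinct, and (since $\lambda_M>\lambda$) positive. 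Hence near the origin the two branches are analytic, $\mu^{u_1,u_2}_M(\kappa)=r_\pm\kappa+O(\kappa^2)$, so no monodromy occurs at $\kappa=0$ (and, consistently, $\sign\Re\mu^{u_1,u_2}_M=\sign\Re\kappa$ there too). It then remains to exclude a non-semisimple multiple root of $\chi_M$ at any other point of $\Cpluscl$: on $i\R\setminus\{0\}$ this is immediate because all four roots are distinct, and for $\kappa\in\Cplus$ it amounts to ruling out a genuine coalescence of the positive-real-part branches, which I would settle by inspecting the discriminant of the explicit quartic $\chi_M$. Once this is in place, one takes $\Omega=\Omega(N,c)\supset\Cpluscl$ small enough to avoid the finitely many remaining degeneracies of all the $\chi_M$ and to keep every strict sign inequality above; the resulting four continuous branches per block, now indexed by $n=M$, are defined on all of $\Omega$ and have the required properties.
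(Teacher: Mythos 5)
Your proposal follows essentially the same route as the paper: reduce to the blocks $\mathcal{A}_M(\kappa)$, take the half-plane count from Lemma~\ref{lem:consistentsplitting} and the imaginary-axis count from Lemma~\ref{lem:ZerosOnImAxis}, and propagate the labels by continuity together with the symmetry $(\mu,\kappa)\mapsto(-\mu,-\kappa)$. You are in fact more careful than the paper at two points it leaves implicit --- the $\mu\mapsto-\bar{\mu}$ symmetry showing that for $\kappa\in i\R$ the two non-imaginary roots split one per open half-plane, and the explicit unfolding $\mu^{u_1,u_2}_M(\kappa)=r_\pm\kappa+O(\kappa^2)$ of the double root at $\kappa=0$ --- while the one step you leave as a sketch (excluding a branch point of the unstable roots inside $\Cplus$ via the discriminant of the quartic) is likewise not addressed in the paper's own proof.
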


\begin{proof}
	The notation of the eigenvalues has been chosen in such a way that Lm. \ref{lem:consistentsplitting} implies the assertion
	for all $\kappa\in\Cplus$.

	For any $\kappa\in i\R$, Lm. \ref{lem:ZerosOnImAxis} implies that
	these eigenvalues satisfy 
	\begin{equation*}
		\Re\mu^s_n(\kappa)<0,\quad \Re\mu^{u_3}_n(\kappa)>0 \quad\text{and}\quad \Re\mu^{u_1,u_2}_n(\kappa)=0
	\end{equation*}
	for any $n\in\{0,\dots,N\}$.
	As the map $\kappa\mapsto \mu_n^\ast(\kappa)$ is continuous, we find an open neighbourhood $U(\kappa)$ of $\kappa$ on which
	$\Re\mu^s_n<0$ and $\Re\mu^{u_3}>0$ still hold. The signs of $\Re\mu^{u_1,u_2}$ for $\Re\kappa<0$ follow
	from the observation that the characteristic polynomial is left unchanged under the mapping 
	$(\mu,\kappa)\mapsto (-\mu,-\kappa)$.

	Defining
	\begin{equation*}
		\Omega := \Cplus \cup \bigcup_{\kappa\in i\R} U(\kappa) 
	\end{equation*}
	concludes the proof.
\end{proof}

\begin{proof}[Proof of Lm.~\ref{lem:ExOfUandS}]
	By Lm. \ref{lem:eigenvalues} there exists some open set $\Omega\supset\Cpluscl$ such that for any $\kappa\in\Omega$
	\begin{equation*}
		\max_{n\in\{0,\dots,N\}} \Re\mu^s_n(\kappa) < \min_{\substack{j\in\{1,2,3\},\\n\in\{0,\dots,N\}}}\Re\mu^{u_j}_n(\kappa).
	\end{equation*}
	Consequently, there is a positive spectral gap between the spaces $\mathcal{S}_N^\infty(\kappa)$ and
	$\mathcal{U}_N^\infty(\kappa)$ which are defined as the span
	of all (possibly generalized) eigenvectors associated with the spectral sets
	\begin{equation*}
		\left\{ \mu^s_n(\kappa):n\in\{0,\dots,N\}\right\},
	\end{equation*}
	and
	\begin{equation*}
		\left\{ \mu^{u_j}_n(\kappa):n\in\{0,\dots,N\},j\in\{1,2,3\}\right\},
	\end{equation*}
	respectively.
	
	Roughly speaking, this splitting is transported to $\xi=0$ by the flow of \eqref{eq:EN}.
	In fact, hypothesis (A2) on the exponential decay ascertains that the construction of stable and unstable spaces
	due to Alexander, Gardner, Jones (see	\cite[Sect.\ 3.B]{AlexanderGardnerEtAl90}) also applies here to yield the spaces
	$\mathcal{S}_N(\kappa)$ and $\mathcal{U}_N(\kappa)$ 
  (these are $\Phi_{\pm}(\lambda,\tau_0)$ with $\lambda=\kappa$ and $\tau_0=0$ in the notation of
	\cite{AlexanderGardnerEtAl90}), which are complex-analytic with respect to $\kappa$ and which
  are unique as complex-analytic continuations of the uniquely defined restrictions to $\Cplus$.%
	\footnote{The construction of stable and unstable spaces can also be found in \cite{Sandstede02,GardnerZumbrun98}.}
\end{proof}
\begin{rem}
	We note that, since here a spectral gap between the spaces $\mathcal{S}_N(\kappa)$ and $\mathcal{U}_N(\kappa)$ is
	maintained as $\kappa$ crosses the imaginary axis, our situation is much simpler than that of the gap lemma (cf.\
	\cite{GardnerZumbrun98}).
\end{rem}

\begin{proof}[Proof of Thm.~\ref{thm:existenceofevansfunctions}]
	In order to define an Evans function, we choose 
	analytic bases (the existence of which is guaranteed by construction due to Kato, see \cite[Ch.\ II.\ §4.2]{Kato76})
\begin{equation*}
\{\zeta_1(\kappa),\dots,\zeta_{d_N^{s}}(\kappa)\} \text{ and } 
\{\eta_1(\kappa),\dots,\eta_{d_N^{u}}(\kappa)\}
\end{equation*}
of $\mathcal{S}_N(\kappa)$ and $\mathcal{U}_N(\kappa)$, respectively. 
Now, we define the Evans function as 
\begin{equation*}
	D_N(\kappa) := \det (\zeta_1(\kappa),\dots,\zeta_{d_N^{s}}(\kappa),\eta_1(\kappa),\dots,\eta_{d_N^{u}}(\kappa)).
\end{equation*}
As the chosen basis vectors are analytic with respect to $\kappa$, the mapping $D_N:\Omega\to\C$ is analytic as well.

Finally, we prove property \eqref{eq:DoubleZeroOfD} in three steps. 
In the rest of the proof, we write $\mathcal{A}$ instead of $\NA$ and we make $c$ explicit to stress the dependence of
$\NA$ on $c$.\\
\textit{Step 1}: According to \cite[Sect.\ 3.3, and Thm.\ 4.1]{Sandstede02} the statement in \eqref{eq:DoubleZeroOfD} is equivalent to the existence of solutions $v_1,v_2$
	satisfying
	\begin{align*}
		v_{1,\xi}(\xi) &= \mathcal{A}(\xi;0,c) v_1(\xi),\\
		v_{2,\xi}(\xi) &= \mathcal{A}(\xi;0,c) v_2(\xi) + \mathcal{A}^{(1)}(\xi,c) v_1(\xi),
	\end{align*}
	where $\mathcal{A}^{(1)}(\xi,c)$ is uniquely defined by 
	$\mathcal{A}(\xi;\kappa,c) = \mathcal{A}(\xi;0,c) + \kappa\mathcal{A}^{(1)}(\xi,c)$.
	It is this statement we are going to prove.\\
  \textit{Step 2}: The profile $U^c(\xi,y)$ is a stationary solution of the Euler equations in co-moving coordinates and thus 
	solves the system 
	\begin{align*}
		(u^c-c)\rho^c_\xi + v^c\rho^c_y &= 0,\\
		\rho^c \left( (u^c-c) u^c_\xi+ v^c u^c_y \right) &= -p^c_\xi,\\
		\rho^c \left( (u^c-c) v^c_\xi+ v^c v^c_y \right) &= -p^c_y-g\rho^c,\\
		u^c_\xi + v^c_y &= 0.
	\end{align*}
	Deriving by $\xi$ and by $c$, respectively, yields that
	\begin{equation*}
		 \tilde{V}_1 = \partial_\xi U^c\quad \text{and}\quad \tilde{V}_2 = \partial_c U^c
	\end{equation*}
	satisfy
	\begin{equation*}
		(\ast)\;\mathcal{L}^c\tilde{V}_1 = 0\quad \text{and}\quad (\ast\ast)\; \mathcal{L}^c\tilde{V}_2 = \tilde{V}_1,
	\end{equation*}
	i.\ e. $\tilde{V}_1$ is an eigenvector and $\tilde{V}_2$ is a generalized eigenvector associated with the eigenvalue $\kappa=0$ of
	$\mathcal{L}^c$, which is the linearized Euler operator (i.e., essentially the right hand side of
	\eqref{eq:evp-euler-full}).\\
  \textit{Step 3}: By Thm. \ref{thm:evp-spatdyn} relation $(\ast)$ implies the existence of some $V_1$ with
	\begin{equation*}
		V_{1,\xi} = \A(\xi;0,c)V_1(\xi).
	\end{equation*}
	A slight modification of the proof of Thm. \ref{thm:evp-spatdyn} shows that relation $(\ast\ast)$ implies the
	existence of some $V_2$ with
	\begin{equation*}
		V_{2,\xi} = \A(\xi;0,c)V_2(\xi) + \A^{(1)}(\xi,c)V_1(\xi).
	\end{equation*}
	The truncation procedure performed in the second part of Sect. \ref{sec:PfThm1} yields functions $v_1,v_2:\R\to\W_N$,
	for any $N\in\N$, with
	\begin{align*}
		v_{1,\xi}(\xi) &= \mathcal{A}(\xi;0,c) v_1(\xi),\\
		v_{2,\xi}(\xi) &= \mathcal{A}(\xi;0,c) v_2(\xi) + \mathcal{A}^{(1)}(\xi,c) v_1(\xi).
	\end{align*}
	We have thus shown that the assertion in Step 1 is true, and this concludes the proof of property
	\eqref{eq:DoubleZeroOfD}. Hence, the proof of Thm. \ref{thm:existenceofevansfunctions} is complete.
\end{proof}
\begin{rem}
  \textup{(i)} 
  As there is more than one choice of bases, the Evans function is unique only up to a non-vanishing factor. This
non-uniqueness causes no trouble since it does not affect the location of the zeros.
\textup{(ii)}
Property \eqref{eq:DoubleZeroOfD} is a consequence of translational invariance and the presence of a continuum of
	travelling waves parametrized by the speed. This is a well-known property of Evans functions associated with a
	solitary wave, e.g.\ cf.~\cite[p.\ 72ff.]{PegoWeinstein92} for the corresponding statement for solitons in
	the generalized Korteweg-deVries equation and other dispersive equations.
\end{rem}

\section{Low-frequency stability of small-amplitude ISWs}\label{sec:smallwaves}
\subsection{Small-amplitude expressions and stability result}
In this section, we apply the Evans function framework established in Sec.~\ref{sec:TruncatedEvansFunctions} to small-amplitude waves that are 
approximated by Korteweg-deVries solitons.
Based on a concrete description of small waves, we derive explicit expressions for the entries of $\mathcal{A}(\xi;\kappa)$.
This section's goal is to preclude unstable modes in a neighbourhood of the origin;
the precise statement is as follows.
\begin{thm}
  \label{thm:SmallAmplKdVRegime}
	For all $N\in\N$ and any $R_0>0$ there exists some $\varepsilon_0>0$ such that for all $\varepsilon\in(0,\varepsilon_0)$
  the Evans function $D_{N,\varepsilon}(\kappa)$ associated with an ISW of amplitude $\varepsilon^2$ satisfies
	\begin{equation*}
		D_{N,\varepsilon}(0) =0,\quad 	D_{N,\varepsilon}'(0) = 0 
	\end{equation*}
	and
	\begin{equation*}
		D_{N,\varepsilon}(\kappa) \not= 0 \text{ for all $\kappa\in\Cpluscl\setminus\{0\}$ with $\abs{\kappa} < R_0\varepsilon^{3}$.}
	\end{equation*}
\end{thm}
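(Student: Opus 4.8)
The two identities $D_{N,\varepsilon}(0)=0$ and $D_{N,\varepsilon}'(0)=0$ are not particular to small amplitude: a small-amplitude ISW is a regular ISW (its profile is known explicitly, cf.\ \cite{James97}, and satisfies (A1)--(A3)), so both are already contained in Thm.~\ref{thm:existenceofevansfunctions}, property~\eqref{eq:DoubleZeroOfD}. What must be shown is that $D_{N,\varepsilon}$ has no zero on $\{\kappa\in\Cpluscl:0<\abs{\kappa}<R_0\varepsilon^3\}$; since this set contracts at the rate $\varepsilon^3$, the plan is to pass to the Korteweg--de Vries long-wave scaling and compare $D_{N,\varepsilon}$, expressed in a rescaled spectral variable, with an explicitly identifiable limit. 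Concretely, I would insert the known small-amplitude expansion $U^c-\bar U=\varepsilon^2\Phi(\varepsilon\xi,\cdot)+O(\varepsilon^3)$, with $\Phi$ a $\sech^2$-profile carried by the first vertical mode $\varphi_0$ and with $\lambda-\lambda_0=O(\varepsilon^2)$, into the coefficients $R_j,S_k$ of Thm.~\ref{thm:evp-spatdyn} and hence into $\NA(\xi;\kappa)$. Introducing the slow variable $X=\varepsilon\xi$, the rescaled parameter $\hat\kappa$ through $\kappa=\varepsilon^3\hat\kappa$, and the diagonal component rescaling that puts the linearised KdV operator in standard form, the system \eqref{eq:EN} becomes $w_X=\mathcal{M}_\varepsilon(X;\hat\kappa)w$ with $\mathcal{M}_\varepsilon$ analytic in $\hat\kappa$; since the off-diagonal blocks $\mathcal{B}_{ML}$ ($M\ne L$) and the potential parts $\mathcal{B}_{MM}$ are $O(\varepsilon^2)$, the block structure decouples to leading order.

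The decisive step is to identify the limiting blocks from the characteristic polynomials~\eqref{eq:CharPol}. For $M=0$ the four characteristic rates of $\mathcal{A}_0(\varepsilon^3\hat\kappa)$ separate into one very slow rate of order $\kappa$ (a density direction that decouples to leading order) and three rates of order $\varepsilon$ governed, to leading order, by a cubic $\hat\mu^3+a\hat\mu+b\hat\kappa=0$ with order-one coefficients $a,b$; after eliminating the slaved direction, the $M=0$ block reduces to the first-order form of the eigenvalue problem for the KdV soliton $\Phi$, whose Evans function $D_{\KdV}$ is, by Pego and Weinstein~\cite{PegoWeinstein92}, analytic on a neighbourhood of $\Cpluscl$, has a zero of multiplicity exactly two at $\hat\kappa=0$, and has no other zero on $\Cpluscl$. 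For $M\ge1$, $\lambda-\lambda_M$ stays bounded away from $0$, so $\mathcal{A}_M(\varepsilon^3\hat\kappa)$ keeps two order-one hyperbolic rates $\pm\sqrt{\delta(\lambda_M-\lambda)}+O(\varepsilon^3)$ together with two rates of order $\kappa$; these blocks stay uniformly partially hyperbolic (the spectral gap being maintained by Lm.~\ref{lem:eigenvalues}), the $O(\varepsilon^2)$ potential being too weak to create a bounded solution, so the corresponding Evans factors converge to non-vanishing analytic functions on all of $\Cpluscl$. Hence, up to a non-vanishing analytic prefactor, the limiting rescaled Evans function equals $D_{\KdV}(\hat\kappa)$.

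It remains to transfer this to the $D_{N,\varepsilon}$ themselves. The bundles $\mathcal{S}_N,\mathcal{U}_N$ defining $D_N$ arise, via the construction of Alexander--Gardner--Jones~\cite{AlexanderGardnerEtAl90}, from the exponential dichotomy of the coefficient matrix; carrying this out for $\mathcal{M}_\varepsilon(\cdot;\hat\kappa)$ and making the dichotomy bounds uniform in $\varepsilon$ --- after a preliminary block-diagonalisation separating the order-one, order-$\varepsilon$, and order-$\varepsilon^3$ rates --- shows that a suitably normalised $D_{N,\varepsilon}(\varepsilon^3\,\cdot\,)$ converges, uniformly on $\{\abs{\hat\kappa}\le R_0\}\cap\Cpluscl$, to the limit of the previous paragraph. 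Each $D_{N,\varepsilon}$ has, by Thm.~\ref{thm:existenceofevansfunctions}, a zero of order at least two at the origin, while the limit vanishes on $\Cpluscl$ only at $\hat\kappa=0$ and there to order exactly two; so Hurwitz's theorem on a small circle about $\hat\kappa=0$ (the limit being nonzero on it) forces, for $\varepsilon$ small, $D_{N,\varepsilon}(\varepsilon^3\,\cdot\,)$ to have exactly two zeros inside, hence none in $\{0<\abs{\hat\kappa}<r\}$, and the uniform non-vanishing of the limit on the annulus $\{r\le\abs{\hat\kappa}\le R_0\}\cap\Cpluscl$ gives $D_{N,\varepsilon}\ne0$ there as well. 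Back in the original variable this is precisely $D_{N,\varepsilon}(\kappa)\ne0$ for all $\kappa$ with $0<\abs{\kappa}<R_0\varepsilon^3$ and $\Re\kappa\ge0$.

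I expect the main obstacle to be exactly this uniform passage to the limit: each block of \eqref{eq:EN} carries characteristic rates of two --- for $M=0$ even three --- different orders in $\varepsilon$, so the exponential dichotomy underpinning $\mathcal{S}_N,\mathcal{U}_N$ must be re-established uniformly after separating these scales, and the reduced $M=0$ block has to be matched \emph{exactly} with the standard linearised-KdV problem in order that the Pego--Weinstein zero count apply verbatim. Verifying that the transverse blocks ($M\ge1$) admit no bounded solution against an $O(\varepsilon^2)$ potential --- uniformly in $\hat\kappa$, including the threshold $\hat\kappa=0$ where $\mathcal{A}_M$ is only partially hyperbolic --- is a further, though more routine, ingredient.
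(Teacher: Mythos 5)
Your proposal follows the same overall strategy as the paper: rescale $\kappa=\varepsilon^3\Lambda$, identify the linearised-KdV eigenvalue problem as the leading-order content of the $M=0$ block, invoke Pego--Weinstein's result that $D_{\KdV}$ has a double zero at the origin and no other zero on $\Cpluscl$, and finish by a zero-counting argument (Hurwitz in your case, Rouch\'e in the paper's) combined with the a priori double zero supplied by Thm.~\ref{thm:existenceofevansfunctions}. The one genuine difference lies in how the scale separation is implemented, and it is exactly the point you flag as the main obstacle. You propose to keep the full $(4N+4)$-dimensional system, block-diagonalise, and re-establish the Alexander--Gardner--Jones exponential dichotomy uniformly in $\varepsilon$ across rates of three different orders; the paper instead eliminates the $O(1)$-hyperbolic directions $W_{4n+3},W_{4n+4}$ ($n\ge1$) once and for all by a Fenichel centre-manifold reduction (Prop.~\ref{prop:exofcmf}), passes to the slow variable $\Xi=\varepsilon\xi$, and only then builds the Evans function of the reduced $(2N+4)$-dimensional system via Pego--Weinstein's Prop.~1.2, after verifying their hypothesis (H3) (unique simple eigenvalue of smallest real part, Lm.~\ref{lem:ChiEpsHasSimpleEV}) by an implicit-function-theorem-plus-compactness argument. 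This sidesteps the uniform-dichotomy issue entirely: on the centre manifold all remaining rates are $O(1)$ in $\Xi$, so continuity of $\hat D_\varepsilon$ in $\varepsilon$ comes for free, and the slow transverse directions $W_{4n+1},W_{4n+2}$ --- which are \emph{not} hyperbolic at $\kappa=0$, so your ``uniformly partially hyperbolic'' description of the $M\ge1$ blocks needs care there --- simply ride along in the centre manifold and contribute the factor $(-\mu)^{2N+1}$ to $\chi_0$ without affecting the zero set. If you want to complete your version, the Fenichel reduction is the cleaner way to discharge the uniform passage to the limit that you correctly identify as the crux.
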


Our proof relies on the slow-fast structure of the problem (E$_N$), for any $N\in\N$, and an Evans function treatment of
the reduced system, in which we recover the eigenvalue problem associated with a KdV soliton!

The starting point is the following remarkable formula for the stream function $\psi^c(\xi,y)$ of a small-amplitude ISW
of speed $c=c_0+\varepsilon^2$ and amplitude roughly $\varepsilon^2$:
\begin{equation}
  \label{eq:smallpsi}
  \psi^c(\xi,y) = a_\varepsilon(\xi) \varphi_0(y) + \O(\abs{a_\varepsilon}^2).
\end{equation}
By virtue of this formula, small ISWs are, to leading order, a product with separated variables; more precisely, 
the height-independent part $a_\varepsilon(\xi)$ describes the horizontal propagation while $\varphi_0(y)$ is a
height-dependent amplification factor.
The function $\varphi_0$, as before, denotes the principal eigenfunction of $\frac{1}{\bar{\rho}'}\partial_y(\bar{\rho}\partial_y)$ 
from Sec.~2 (cf.~p.~\pageref{def:phi0}). 
The function $a_\varepsilon(\xi)$ is a symmetric soliton solution of the equation
\begin{equation*}
  a_{\varepsilon,\xi\xi} = -\frac{\varepsilon^2}{s} a_\varepsilon - \frac{r}{s} a_\varepsilon^2 +  O(\varepsilon^4),
\end{equation*}
which involves the $\bar{\rho}$-dependent coefficients $r$ and $s$ defined by
\begin{equation*}
  s = -\frac{c_0}{2} \frac{\int_0^1 \bar{\rho}\varphi_{0}^2 \d{y}}{\int_0^1 \bar{\rho}(\varphi_0')^2 \d{y}}<0
  \quad\text{and}\quad 
  r = -\frac{3}{4} \frac{\int_0^1 \bar{\rho}(\varphi_0')^3 \d{y}}{\int_0^1 \bar{\rho}(\varphi_0')^2 \d{y}}.
\end{equation*}
For our exponential stratification $\bar{\rho}(y)=\e^{-\delta y}$, these coefficients are (cf.~\cite{Benney66,James97}):
\begin{equation*}
	s = -\frac{c_0}{2\delta\lambda_0}<0
  \quad \text{and} \quad
  r = -\frac{3\delta \pi^3 (\e^{\delta/2}+1)}{2 \left( \frac{1}{4}\delta^2+\pi^2  \right)%
  \left(\frac{1}{4}\delta^2+9\pi^2 \right)}<0.
\end{equation*}
Finally, the important simple relationship
\begin{equation*}
  a_\varepsilon(\xi) = \varepsilon^2 A_\ast(\varepsilon\xi) + O(\varepsilon^4)
\end{equation*}
expresses $a_\varepsilon(\xi)$ in terms of the KdV soliton
\begin{equation}
  A_\ast(\Xi) = -\frac{3}{2r}\, \sech^2\left(\frac{1}{\sqrt{-s}}\, \Xi \right)
  \label{eq:kdvsoliton}
\end{equation}
satisfying
\begin{equation}
  A_{\ast,\Xi\Xi}=-\frac{1}{s}A_\ast -\frac{r}{s}A_\ast^2.
  \label{eq:kdvprofileeq}
\end{equation}

The appearance of the KdV equation in the context of small-amplitude ISWs
permits to extract the essential part of the lengthy expressions for the operators $R_k$ and $S_l$ given in Thm.~\ref{thm:evp-spatdyn}.
We exemplify this procedure for the expression
\begin{equation*}
  {}^0\!\mathcal{A}(\xi,\kappa)_{43} = \scalarprod{\A(\xi,\kappa)U_0^3,U_0^4}.
\end{equation*}
Recalling the notation from Thm.~\ref{thm:evp-spatdyn} and using Eq.~\eqref{eq:smallpsi}, we find
\begin{align}
  \nonumber\scalarprod{\A(\xi,\kappa)U_0^3,U_0^4} &= \int_{0}^{1} (-\bar{\rho}'(y)) S_3(\varphi_0)\varphi_0 \d{y}\\
  \nonumber &= \varepsilon^2\left(\frac{2\lambda_0\delta}{c_0} - \frac{3\delta}{c_0}\int_{0}^{1}\bar{\rho}(\varphi_0')^3
  \d{y} A_\ast(\varepsilon\xi)\right)+ \text{h.o.t.}\\
  \label{eq:A43-leadingorder} &= \varepsilon^2\left(-\frac{1}{s}-\frac{2r}{s} A_\ast(\varepsilon\xi) \right)+ \text{h.o.t.}
\end{align}

Proceeding in this way, we finally end up with the following expressions 
\begin{equation*}
	\mathcal{A}_0 =
	\begin{pmatrix}
		\frac{1}{c_0}\kappa +\O(\kappa\varepsilon^2) & 0 & -\frac{1}{c_0} + \O(\varepsilon^2) & 0\\
		0 & 0 & 1 & 0\\
		0 & 0 & 0 & 1\\
		\lambda_0\delta\kappa + \O(\kappa\varepsilon^2)
		& -\frac{\lambda_0\delta}{c_0}\kappa +\O(\kappa\varepsilon^2) 
		& \frac{2\lambda_0\delta}{c_0}\varepsilon^2 +\O(\varepsilon^4)
		& \frac{1}{c_0}\kappa + \O(\kappa\varepsilon^2)
	\end{pmatrix},
\end{equation*}

\begin{equation*}
	\mathcal{A}_n =
	\begin{pmatrix}
		\frac{1}{c_0}\kappa +\O(\kappa\varepsilon^2) & 0 & -\frac{1}{c_0} + \O(\varepsilon^2) & 0\\
		0 & 0 & 1 & 0\\
		0 & 0 & 0 & 1\\
		\lambda_0\delta\kappa + \O(\kappa\varepsilon^2)
		& -\frac{\lambda_n\delta}{c_0}\kappa +\O(\kappa\varepsilon^2) 
		& \delta(\lambda_n-\lambda_0)+\O(\varepsilon^2)
		& \frac{1}{c_0}\kappa + \O(\kappa\varepsilon^2)
	\end{pmatrix}
\end{equation*}
for $n\geq 1$, and
\begin{equation*}
	\mathcal{B}_{nm} = 
	\begin{pmatrix}
		\varepsilon^3 A_\varepsilon'(\xi) G^{11}_{nm} & \varepsilon^3 A_\varepsilon'(\xi) G^{12}_{nm} & \varepsilon^2 A_\varepsilon(\xi) G^{13}_{nm} & 0\\
		0&0&0&0\\
		0&0&0&0\\
		\varepsilon^3 A_\varepsilon'(\xi) G^{41}_{nm} & \varepsilon^3 A_\varepsilon'(\xi) G^{42}_{nm} & \varepsilon^2 A_\varepsilon(\xi) G^{43}_{nm} 
		& \varepsilon^3 A_\varepsilon'(\xi) G^{44}_{nm}
	\end{pmatrix}
\end{equation*}
for $n,m\in\N$ 
after substituting $a_\varepsilon(\xi)=\varepsilon^2 A_\varepsilon(\xi)$ and neglecting higher order terms
(i.e. $\O(\varepsilon^4+\kappa\varepsilon^4)$).
All of the constants $G^{ij}_{nm}$, which are independent of $\kappa$ and $\varepsilon$, can be computed explicitly; in the
sequel, however, only the following three will be important:
\begin{equation*}
  G^{41}_{00} = \frac{2c_0}{3} \frac{r}{s},\quad 
  G^{42}_{00} = -\frac{4}{3} \frac{r}{s},\quad
  G^{43}_{00} = -\frac{2r}{s}.
\end{equation*}
Note that $G^{43}_{00}$ can be directly read off from Eq.~\eqref{eq:A43-leadingorder}.

In the rest of this section, we shall be occupied with proving that 
the autonomous first-order system
\begin{align}
  \tag{E$_{N,\varepsilon}$}\label{eq:evpneps}
  w'(\xi) &= {}^N\!\mathcal{A}_{\kappa,\varepsilon}[A_\varepsilon,B_\varepsilon] w(\xi) \\
  \label{eq:profileAB}
  \begin{pmatrix}
    A_\varepsilon(\xi)\\B_\varepsilon(\xi)
  \end{pmatrix}'
  &= \begin{pmatrix}
    \varepsilon B_\varepsilon(\xi)\\
    \varepsilon\left( -\frac{1}{s} A_\varepsilon - \frac{r}{s} A_\varepsilon^2  \right) + O(\varepsilon^3)
  \end{pmatrix}
\end{align}
does not have bounded solutions provided that $\abs{\kappa \varepsilon^{-3}}$ be bounded and $\varepsilon$ be small
enough.

We divide the statement of the theorem in two parts corresponding to the following two propositions which jointly prove the
theorem. 
The first proposition reduces the dimension of the linear part of the original problem from $4N+4$ to $2N+4$.
The second proposition states absence of unstable modes in the reduced problem.
\begin{prop}
  \label{prop:exofcmf}
  For any $N$ and for sufficiently small $\varepsilon$ the system \eqref{eq:evpneps} possesses a centre manifold.
\end{prop}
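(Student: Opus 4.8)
The plan is to set up \eqref{eq:evpneps}--\eqref{eq:profileAB} as a fast--slow system in standard form and invoke a centre-manifold theorem for ODEs. First I would identify the slow variables: the profile pair $(A_\varepsilon,B_\varepsilon)$ evolves on the slow scale $\varepsilon\xi$, so after rescaling $\Xi=\varepsilon\xi$ the equation \eqref{eq:profileAB} becomes an $O(1)$ system whose fixed point at $A=B=0$ (the quiescent state) is of interest, while the linear part \eqref{eq:evpneps} has coefficient matrix ${}^N\!\mathcal{A}_{\kappa,\varepsilon}$ which, at $\varepsilon=0$ and $\kappa=0$, reduces to the block-diagonal constant matrix $\NA^\infty(0)=\mathcal{A}_0(0)\oplus\dots\oplus\mathcal{A}_N(0)$ from \eqref{eq:AinftyIsASum}. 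The key spectral observation is that at $\kappa=0$ the blocks $\mathcal{A}_M(0)$ have, by the Newton-polygon analysis behind Lm.~\ref{lem:consistentsplitting} together with the extra degeneracies visible in \eqref{eq:matrixAM} when $\kappa=0$, precisely the right count of eigenvalues on the imaginary axis to produce a centre subspace of dimension $2N+4$: for each $M\ge 1$ the block $\mathcal{A}_M(0)$ contributes a pair $\pm\sqrt{\delta(\lambda_M-\lambda)}$ of hyperbolic (real, nonzero) eigenvalues and a double zero, while the block $\mathcal{A}_0(0)$ contributes a quadruple zero; adding the two-dimensional profile equation \eqref{eq:profileAB}, whose linearization at the origin is $\left(\begin{smallmatrix}0&0\\ -1/s&0\end{smallmatrix}\right)$ on the $\Xi$-scale — nilpotent, hence purely central — gives a total central dimension $(4+2N)+2 = 2N+4$ ... wait, one must be careful to count: $4$ from the $\mathcal{A}_0$ block, $2N$ from the $N$ blocks $\mathcal{A}_1,\dots,\mathcal{A}_N$, and $0$ from the slow profile block if one counts $A_\varepsilon,B_\varepsilon$ as parameters rather than dynamic variables. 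The cleanest route is to treat $\varepsilon$ itself (and $\kappa/\varepsilon^3$) as an additional parameter via the trivial equation $\varepsilon'=0$, so that the reduced dimension $2N+4$ stated in the prose matches: $4$ (from $\mathcal{A}_0$) $+\,2N$ (from $\mathcal{A}_1,\dots,\mathcal{A}_N$) $+\,2$ (the profile $(A_\varepsilon,B_\varepsilon)$).

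Concretely, I would proceed as follows. Step~1: put the combined system $(w, A_\varepsilon, B_\varepsilon, \varepsilon)$ into the form $U' = \mathcal{M}U + \mathcal{N}(U)$ where $\mathcal{M}$ is the linearization at the origin and $\mathcal{N}$ collects higher-order terms; verify $\mathcal{N}$ is smooth (using assumption (A2) and the polynomial dependence of ${}^N\!\mathcal{A}$ on its arguments, which is explicit from the $\mathcal{B}_{nm}$-formulas). Step~2: compute $\mathrm{spec}(\mathcal{M})$ by block decomposition as above; identify the centre subspace $E^c$ (eigenvalue $0$, with nontrivial Jordan structure) of dimension $2N+4$ and the hyperbolic complement $E^h$ spanned by the eigenvectors for $\pm\sqrt{\delta(\lambda_M-\lambda)}$, $M=1,\dots,N$, of dimension $2N$. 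Here one uses that $\lambda_M>\lambda_0\ge\lambda$? — actually $\lambda-\lambda_M<0$ for $M\ge1$ since $\lambda<\lambda_0<\lambda_M$ in the KdV regime ($\lambda=g/c^2$ with $c=c_0+\varepsilon^2$ and $c_0^2=g/(\delta\lambda_0)$, cf.\ the formula for $s$), so $\delta(\lambda_M-\lambda)>0$ and the square roots are real, giving genuine hyperbolicity. Step~3: apply a standard centre-manifold theorem for finite-dimensional ODEs (e.g.\ \cite{HaragusIooss11} or the version for systems with parameters) to obtain a locally invariant $C^k$ manifold $\mathcal{W}^c$ of dimension $2N+4$ tangent to $E^c$ at the origin, defined for $\varepsilon$ in a neighbourhood of $0$ and containing all solutions that stay bounded and small for all $\xi\in\R$ — in particular the solutions relevant to the Evans-function zeros.

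The main obstacle I anticipate is \emph{not} the abstract centre-manifold theorem itself (which is classical in finite dimensions once the spectral picture is pinned down) but rather Step~2: verifying rigorously that at $\kappa=0$, $\varepsilon=0$ the matrix ${}^N\!\mathcal{A}$ has exactly the claimed splitting with the hyperbolic eigenvalues $\pm\sqrt{\delta(\lambda_M-\lambda)}$ bounded away from the imaginary axis \emph{uniformly} as $M$ ranges over $1,\dots,N$, so that the spectral gap survives and one does not accidentally pick up extra centre directions from the higher blocks; and, relatedly, controlling the perturbation as $\kappa$ and $\varepsilon$ move away from $0$ — one wants the centre manifold to persist and to capture all bounded solutions in the regime $|\kappa|<R_0\varepsilon^3$, which means the $\kappa$-dependent corrections $O(\kappa\varepsilon^2)$ and the $O(\varepsilon^3)$ terms in \eqref{eq:profileAB} must be shown not to destabilise the splitting. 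I would handle this by treating $(\kappa,\varepsilon)$ as joint small parameters, noting from the explicit form of $\mathcal{A}_n$ that all $\kappa$- and $\varepsilon$-dependent entries are $O(\kappa)+O(\varepsilon^2)$, hence the hyperbolic part of $\mathcal{M}$ perturbs continuously and the gap persists on a full neighbourhood of the origin in parameter space. Once the centre manifold exists, the reduced dynamics on it is a $(2N+4)$-dimensional system that will be analysed — and shown to contain a copy of the KdV eigenvalue problem — in the second proposition.
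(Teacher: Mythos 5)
You have correctly located the hyperbolic transverse directions---the pairs $\pm\sqrt{\delta(\lambda_n-\lambda_0)}$ carried by the components $w_{4n+3},w_{4n+4}$, $n=1,\dots,N$---and the persistence-under-perturbation logic is the right one, but your set-up differs from the paper's in ways that leave genuine gaps. The paper does not linearize at the origin and invoke a pointwise centre-manifold theorem: it first performs an \emph{anisotropic} scaling $w_3=\varepsilon W_3$, $w_4=\varepsilon^2 W_4$, $w_{4n+1}=\varepsilon W_{4n+1}$, $w_{4n+2}=\varepsilon W_{4n+2}$, $w_{4n+3}=\varepsilon^2 W_{4n+3}$, $w_{4n+4}=\varepsilon^2 W_{4n+4}$, after which every remaining component satisfies $W_j'=O(\varepsilon)$ while the block in the variables $W_{4n+3},W_{4n+4}$ ($n\ge 1$) is an $O(1)$ hyperbolic system; the set $\mathcal{M}_0=\{W_{4n+3}=W_{4n+4}=0,\ n=1,\dots,N\}$ is then a normally hyperbolic manifold \emph{of equilibria} at $\varepsilon=0$, and Fenichel's persistence theorem yields $\mathcal{M}_\varepsilon$ as a graph over $\mathcal{M}_0$. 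Your local centre manifold at the single equilibrium $(w,A,B)=(0,0,0)$ only captures orbits that remain in a small neighbourhood of that point; the candidate eigenfunctions $w$ are bounded but not small, so you must either exploit the linearity in $w$ explicitly (rescale $w\mapsto\eta w$ and argue that the centre manifold can be chosen as a vector bundle over the profile dynamics) or work, as the paper does, with the global slow manifold of the skew-product system. Moreover, without the anisotropic scaling the reduced equations are not in the balanced form from which Prop.~\ref{prop:redprob} extracts the KdV eigenvalue problem, which is the purpose of the reduction.

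Two further slips. The linearization of \eqref{eq:profileAB} at the origin on the slow scale $\Xi=\varepsilon\xi$ is $\begin{pmatrix}0&1\\-1/s&0\end{pmatrix}$, not the nilpotent matrix you wrote; since $s<0$ its eigenvalues $\pm\sqrt{-1/s}$ are real and nonzero (a saddle, as it must be for $A_\ast$ to be a homoclinic $\sech^2$-pulse). It is only on the fast $\xi$-scale that the profile block is $O(\varepsilon)$ and hence central at $\varepsilon=0$; mixing the two scales as your write-up does risks miscounting the centre directions. Finally, the ``uniformity over $M=1,\dots,N$'' that you single out as the main obstacle is vacuous here: $N$ is fixed throughout, so the spectral gap is simply $\sqrt{\delta(\lambda_1-\lambda_0)}>0$. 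The real work lies in the scaling and in the graph/persistence argument, not in the spectral count.
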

We denote the reduced problem by \eqref{eq:evp-eps}.
\begin{prop}
  \label{prop:redprob}
  For any $R_0$, $N\in\N$ and $\abs{\kappa}<R_0\varepsilon^3$ with $\Re\kappa>0$ the reduced problem \eqref{eq:evp-eps}
  does not possess bounded solutions.
\end{prop}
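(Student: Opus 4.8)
The plan is to put the reduced problem \eqref{eq:evp-eps} into its natural Korteweg--deVries scaling, to recognise the leading-order reduced system as the linearisation of the \KdV\ equation about the soliton $A_\ast$ of \eqref{eq:kdvsoliton}, and then to conclude from the classical spectral stability of \KdV\ solitons \cite{PegoWeinstein92} by a Rouch\'e argument for the associated Evans functions.

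First I would rescale. Passing to the slow variable $\Xi = \varepsilon\xi$ and the rescaled spectral parameter $\hat\kappa = \varepsilon^{-3}\kappa$ turns the hypothesis $\abs{\kappa} < R_0\varepsilon^3$, $\Re\kappa > 0$, into $\abs{\hat\kappa} < R_0$, $\Re\hat\kappa > 0$, i.e.\ into a condition on a fixed bounded region of $\Cplus$. In these variables the profile equation \eqref{eq:profileAB} becomes, uniformly on $\R$ and up to $\O(\varepsilon^2)$, the soliton equation \eqref{eq:kdvprofileeq}, while the linear part of \eqref{eq:evpneps}, restricted to the centre manifold of Proposition~\ref{prop:exofcmf} and written in the parametrising slow coordinates, is a non-autonomous linear ODE on $\C^{2N+4}$ whose coefficient matrix converges, as $\varepsilon\to 0$ and locally uniformly in $\hat\kappa$, to a limit $\mathcal{M}(\Xi;\hat\kappa)$ with constant limits $\mathcal{M}(\pm\infty;\hat\kappa)$. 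Using the small-amplitude expressions for $\mathcal{A}_0$, $\mathcal{A}_n$ ($n\ge 1$) and $\mathcal{B}_{nm}$ recorded above --- in particular the constants $G^{41}_{00}$, $G^{42}_{00}$, $G^{43}_{00}$ and the identity \eqref{eq:A43-leadingorder} --- together with the slaving relations of the reduction (notably that the $\rho$-coordinate is slaved to $\psi$, and that the coordinates coming from the double-zero subspaces of $\mathcal{A}_1,\dots,\mathcal{A}_N$ evolve only at rate $\O(\varepsilon^2)$ in $\Xi$), I would identify $\mathcal{M}$: the four coordinates of the $\mathcal{A}_0$-block reproduce a first-order-system form of the linearised \KdV\ eigenvalue problem about $A_\ast$ at the spectral value $\hat\kappa$, while the remaining $2N$ coordinates decouple from it at leading order and carry a block with nilpotent asymptotic matrix. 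Thus the limiting reduced problem is a harmless extension of the \KdV\ eigenvalue problem.

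Second, I would set up --- exactly as in Section~\ref{sec:TruncatedEvansFunctions}, using the exponential decay of $A_\ast$ and the analytic continuation past the imaginary axis (a gap lemma, cf.~\cite{GardnerZumbrun98}) forced by the zero eigenvalue present at $\hat\kappa = 0$ --- a well-defined Evans function $D^{\mathrm{red}}_{N,\varepsilon}$ for \eqref{eq:evp-eps}, analytic on a neighbourhood $\Omega^{\mathrm{red}}$, independent of $\varepsilon$, of $\{\abs{\hat\kappa}\le R_0\}\cap\Cpluscl$, vanishing at a point $\hat\kappa\in\Cplus$ precisely when \eqref{eq:evp-eps} has a bounded solution there, and satisfying $D_{N,\varepsilon} = (\text{nowhere-vanishing factor})\cdot D^{\mathrm{red}}_{N,\varepsilon}$ (the factor coming from the hyperbolic directions removed in Proposition~\ref{prop:exofcmf}). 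By the first step and the analytic, $\varepsilon$-uniform dependence of stable and unstable bundles on the coefficient matrix, $D^{\mathrm{red}}_{N,\varepsilon}$ should converge, uniformly on $\{\abs{\hat\kappa}\le R_0\}\cap\Cpluscl$, to a non-vanishing multiple of the \KdV\ Evans function $D_{\KdV}$ of the soliton $A_\ast$.

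Finally I would invoke the spectral stability of \KdV\ solitons: $D_{\KdV}$ has a zero of order exactly two at $\hat\kappa = 0$ and is non-zero on $\Cpluscl\setminus\{0\}$ \cite{PegoWeinstein92}. Fix $\theta\in(0,R_0)$. On the compact set $\{\theta\le\abs{\hat\kappa}\le R_0\}\cap\Cpluscl$, $D_{\KdV}$ is bounded away from zero, so by uniform convergence $D^{\mathrm{red}}_{N,\varepsilon}$ has no zero there once $\varepsilon$ is small. On the disc $\{\abs{\hat\kappa} < \theta\}$, Rouch\'e's theorem --- applied on the circle $\{\abs{\hat\kappa} = \theta\}$, where $D_{\KdV}\neq 0$ --- shows that for small $\varepsilon$ the function $D^{\mathrm{red}}_{N,\varepsilon}$ has exactly two zeros counted with multiplicity; but property \eqref{eq:DoubleZeroOfD} of Theorem~\ref{thm:existenceofevansfunctions} holds for every $\varepsilon$ and, through the factorisation above, places a double zero of $D^{\mathrm{red}}_{N,\varepsilon}$ at $\hat\kappa = 0$, so there are no others in the disc. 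Hence $D^{\mathrm{red}}_{N,\varepsilon}(\hat\kappa)\neq 0$ for all $\hat\kappa\in\Cpluscl\setminus\{0\}$ with $\abs{\hat\kappa}\le R_0$, provided $\varepsilon$ is small; in particular \eqref{eq:evp-eps} has no bounded solution for $\kappa\in\Cplus$ with $\abs{\kappa} < R_0\varepsilon^3$, which is the assertion of Proposition~\ref{prop:redprob}. I expect the main obstacle to be the first step --- carrying the centre-manifold reduction out explicitly enough to pin the limit $\mathcal{M}(\Xi;\hat\kappa)$ down as the \KdV\ eigenvalue problem, with every power of $\varepsilon$ correctly tracked and the convergence uniform in $\hat\kappa$; a close second is checking that the $2N$ extra coordinates genuinely decouple at leading order, so that they introduce neither spurious zeros of $D^{\mathrm{red}}_{N,\varepsilon}$ nor a vanishing of the limiting prefactor.
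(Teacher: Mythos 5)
Your proposal follows essentially the same route as the paper: identify the $\varepsilon\to 0$ limit of the reduced system (in the slow variable and with $\Lambda=\kappa\varepsilon^{-3}$) with the KdV eigenvalue problem about $A_\ast$ plus trivially decoupled blocks, define a Pego--Weinstein-type Evans function for the reduced problem, show at $\varepsilon=0$ it is a nonvanishing constant multiple of $D_{\KdV}$, and then combine Rouch\'e's theorem with the a priori double zero at the origin to exclude all other zeros in the compact region. The only cosmetic difference is that the paper verifies the Pego--Weinstein hypotheses directly (in particular the simple slowest eigenvalue of the asymptotic matrix, via an implicit-function/compactness argument) and exhibits the special solutions $(0,\zeta^+_{\KdV},0,\dots,0)^{\mathsf T}$ and $(\ast,\eta^-_{\KdV},0,\dots,0)$ explicitly rather than appealing to a gap lemma and a factorisation of the full Evans function.
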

The two propositions are proved in the subsequent subsections.
In the rest of this section, we arbitrarily fix $N\in\N$ and $R_0>0$.

\subsection{Proof of Propositon~\ref{prop:exofcmf}: Centre manifold reduction}
By introducing $\Lambda:=\kappa\varepsilon^{-3}$, we are in the regime
\begin{equation*}
	0\le \abs{\Lambda}\le R_0.
\end{equation*}
By scaling the dependent variables as 
\begin{align*}
	w_1(\xi) &= {W}_1(\xi),&
	w_{4n+1}(\xi) &= \varepsilon W_{4n+1}(\xi),\\
	w_2(\xi) &= {W}_2(\xi),&
	w_{4n+2}(\xi) &= \varepsilon W_{4n+2}(\xi),\\
	w_3(\xi) &= \varepsilon W_3(\xi),&
	w_{4n+3}(\xi) &= \varepsilon^2 W_{4n+3}(\xi),\\
	w_4(\xi) &= \varepsilon^2 W_4(\xi),&
	w_{4n+4}(\xi) &= \varepsilon^2 W_{4n+4}(\xi),
\end{align*}
for all $n\in\{1,\cdots,N\}$, the problem \eqref{eq:evpneps} takes, to leading order, the form
\begin{align*}
	A_\varepsilon' &= \varepsilon B_\varepsilon,\\
	B_\varepsilon' &= \varepsilon\left(-\frac{1}{s} A_\varepsilon- \frac{r}{s}A_\varepsilon^2
		\right)+O(\varepsilon^3),\\
	W_1' &= O(\varepsilon),\\
	W_2' &= O(\varepsilon),\\
	W_3' &= O(\varepsilon),\\
	W_4' &= O(\varepsilon),\\
	W_{4n+1}' &= O(\varepsilon),\\
	W_{4n+2}' &= O(\varepsilon),\\
	W_{4n+3}' &= W_{4n+4},\\
	W_{4n+4}' &= 	\delta(\lambda_n-\lambda_0)W_{4n+3} + O(\varepsilon^2),
\end{align*}
for all $n$ with $1\le n \le N$. We introduce the two complementary index sets
\begin{equation*}
  I_h := \{4n+3, 4n+4:n=1,\dots,N\} \text{ and } 
  I_c := \{1,\dots,4N+4\}\setminus I_h
\end{equation*}
such that $j\in I_c$ iff $W_j'=O(\varepsilon)$.
From the form of the equations, we easily infer that, for $\varepsilon=0$, the set
	\begin{equation*}
    \mathcal{M}_{0} := \{ W_j = 0: j\in I_h\}
	\end{equation*}
is a centre manifold, which is normally hyperbolic since the partial Jacobian	matrix of this system 
with respect to the variables $W_{4n+3}, W_{4n+4}$ for $n=1,\dots,N$ has a block structure of the form 
\begin{equation}
	\begin{pmatrix}
		Y_1 & \ast & \cdots & \ast\\
		0 & Y_2 &\ddots &\vdots \\
		\vdots & \ddots & \ddots & \ast\\
		0 & \cdots & 0 & Y_N\\
	\end{pmatrix}
\end{equation}
where each ``$\ast$'' denotes a submatrix which is irrelevant in the sequel and $Y_n$ is given by
\begin{equation*}
	Y_n = 
\begin{pmatrix}
		0 & 1\\
		\delta(\lambda_n-\lambda_0) & 0
	\end{pmatrix},
\end{equation*}
thus all the eigenvalues, given by $\pm \sqrt{\delta(\lambda_n-\lambda_0)}$ for $n=1,\dots,N$, are real and different from zero.

By virtue of Fenichel's theorem on the persistence of normally hyperbolic invariant manifolds (see
\cite{Fenichel72,Fenichel79,Jones95}), we conclude that 
(a)~an invariant manifold $\mathcal{M}_{\varepsilon}$ exists for all sufficiently small $\varepsilon$, say $0<\varepsilon<\varepsilon_1$,
(b)~$\mathcal{M}_\varepsilon$ is a graph over $\mathcal{M}_{0}$ and 
(c)~$W_j = O(\varepsilon)$ for $j\in I_h$.

In this way, we obtain a reduced system for the variables $\{W_j:j\in I_c\}$.
After changing to the slow scale
\begin{equation*}
	\Xi := \varepsilon\xi, 
\end{equation*}
setting
\begin{equation*}
  A_\varepsilon(\xi) = \tilde{A}_\varepsilon(\Xi),\quad 
  B_\varepsilon(\xi) = \tilde{B}_\varepsilon(\Xi)
\end{equation*}
and
\begin{equation*}
W_{1}(\xi) = \frac{1}{2}\hat{W}_{1}(\Xi) -\frac{1}{c_0}\hat{W}_{2}(\Xi), \quad
W_{2}(\xi) = \frac{c_0}{2}\hat{W}_{1}(\Xi) +\hat{W}_{2}(\Xi)
\end{equation*}
as well as 
\begin{equation*}
  W_j(\xi) = \hat{W}_j(\Xi) \quad \text{for all $j\in I_c\setminus\{1,2\}$},
\end{equation*}
we find that,
omitting the hats, 
the reduced system is of the form 
\begin{equation}
	\label{eq:evp-eps}
  \tag{$\widehat{\textup{E}}_{N,\varepsilon}$}
\begin{aligned}
	\dot{W}_1 &= O(\varepsilon^2),\\
	\dot{W}_2 &= W_3 + O(\varepsilon^2),\\
	\dot W_3 &= W_4,\\
	\dot W_4 &= {\Gamma}_1{W}_1 + {\Gamma}_2 {W}_2 + {\Gamma}_3 {W}_3 + O(\varepsilon),\\
	\dot W_{4n+1} &= O(\varepsilon),\\ 
	\dot W_{4n+2} &= O(\varepsilon),
\end{aligned}
\end{equation}
with 
\begin{align*}
	\Gamma_1(\Xi) &= \frac{1}{2} \tilde{\Gamma}_1 + \frac{c_0}{2} \tilde{\Gamma}_2 = -\frac{1}{2} \dot{A}_\ast(\Xi) \int_{0}^{1} \bar{\rho}\varphi_0'^3 \d{y}
	=-\frac{c_0r}{3s} \dot{A}_\ast(\Xi),\\
	\Gamma_2(\Xi) &= -\frac{1}{c_0} \tilde{\Gamma}_1 + \tilde{\Gamma}_2= \frac{\Lambda}{s} - \frac{2r}{s}\dot{A}_\ast(\Xi),\\
	\Gamma_3(\Xi) &= -\frac{1}{s} - \frac{2r}{s}A_\ast(\Xi).
\end{align*}
In this system, the KdV eigenvalue problem becomes visible as follows:
For $\varepsilon=0$ we obtain the system
\begin{equation}
	\label{eq:evp-eps0}
  \tag{$\widehat{\textup{E}}_{N,0}$}
\begin{aligned}
	\dot{W}_1 &= 0,\\
	\dot{W}_2 &= W_3,\\
	\dot W_3 &= W_4,\\
	\dot W_4 &= {\Gamma}_1{W}_1 + {\Gamma}_2 {W}_2 + {\Gamma}_3 {W}_3,\\
	\dot W_{4n+1} &= 0,\\ 
	\dot W_{4n+2} &= 0;
\end{aligned}
\end{equation}
the set $\{W_1=0\}$ is invariant for this flow and on this set the only non-trivial equations are those for
$W_2,W_3,W_4$:
\begin{equation}
	\begin{pmatrix}
		W_2\\W_3\\W_4
	\end{pmatrix}_\Xi
	= 
	\begin{pmatrix}
		0 & 1 & 0\\
		0 & 0 & 1\\
		\frac{\Lambda}{s} - \frac{2r}{s}\dot{A}_\ast(\Xi) & -\frac{1}{s} - \frac{2r}{s}A_\ast(\Xi) & 0
	\end{pmatrix}
	\begin{pmatrix}
		W_2\\W_3\\W_4
	\end{pmatrix}.
	\tag{E$_\KdV$}
	\label{eq:evp-kdv}
\end{equation}
This is the eigenvalue problem of the KdV equation associated with $A_\ast(\Xi)$!

\subsection{Proof of Prop.~\ref{prop:redprob}: Absence of growing modes in the reduced system~\eqref{eq:evp-eps}} 
We will use an Evans function argument in order to prove the statement. 
Therefore, we will first show that the system~\eqref{eq:evp-eps}
is contained in the class of eigenvalue problems treated by Pego and Weinstein in~\cite{PegoWeinstein92}.
For this purpose, we have to check their hypotheses (H1-H4) on the smoothness of the matrix $\leftup{N}\mathcal{A}$, limits at infinity,
simplicity of the lowest eigenvalue, and integrability of the deviator.
As it is easy to see that (H1), (H2) and (H4) are satisfied, we
concentrate on (H3), which states that there is a domain in $\C$ such that 
the asymptotic matrix has a unique simple eigenvalue of smallest real part.
To be more precise, let $\chi_{\varepsilon}(\mu;\Lambda)$ denote the characteristic polynomial of the asymptotic
matrix associated with system \eqref{eq:evp-eps} for $\varepsilon>0$; we have the following lemma.
\begin{lem}
	\label{lem:ChiEpsHasSimpleEV}
	There exists some $\varepsilon_2>0$ such that $\chi_{\varepsilon}(\mu;\Lambda)$ has a unique simple eigenvalue of
	smallest real part for all $0\le \abs{\Lambda}\le R_0$ and for all $0\le \varepsilon\le \varepsilon_2$.
\end{lem}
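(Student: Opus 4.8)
The plan is to compute the asymptotic matrix of \eqref{eq:evp-eps} explicitly and track its eigenvalues as $\varepsilon\to 0$ using a singular-perturbation (slow-fast) argument, exactly mirroring the block structure already identified in the proof of Proposition~\ref{prop:exofcmf}. First I would write down $\chi_\varepsilon(\mu;\Lambda)$ in factored form: since, at $\xi=\pm\infty$, one has $A_\ast=\dot A_\ast=0$, the reduced coefficient matrix becomes block-triangular with the $(W_1,\dots,W_4)$-block carrying the characteristic polynomial of the asymptotic KdV operator (from \eqref{eq:evp-kdv}) together with a trivial factor $\mu$ coming from $\dot W_1=O(\varepsilon^2)$, and the remaining $2N$ variables $W_{4n+1},W_{4n+2}$ contributing factors that are $O(\varepsilon)$-perturbations of $\mu^2$. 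Concretely I expect
\begin{equation*}
  \chi_\varepsilon(\mu;\Lambda) = \bigl(\mu^3 + \tfrac{1}{s}\mu - \tfrac{\Lambda}{s}\bigr)\cdot\mu\cdot\prod_{n=1}^{N}\bigl(\mu^2 + O(\varepsilon)\bigr) + O(\varepsilon),
\end{equation*}
up to harmless reshuffling of the $O(\varepsilon)$ remainders.

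The next step is to locate the roots at $\varepsilon=0$ and argue persistence. At $\varepsilon=0$ the cubic factor $\mu^3+\tfrac1s\mu-\tfrac{\Lambda}{s}$ is precisely the asymptotic characteristic polynomial of the KdV eigenvalue problem \eqref{eq:evp-kdv}; from the Pego--Weinstein analysis (or a direct computation, recalling $s<0$) it has, for $\Re\Lambda\ge 0$ and $\abs\Lambda\le R_0$, exactly one root $\mu_-(\Lambda)$ of smallest real part, which is real-negative when $\Lambda\ge0$ and which stays simple and separated from the other two roots throughout the compact parameter set $\{0\le\abs\Lambda\le R_0,\ \Re\Lambda\ge0\}$. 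The factor $\mu$ contributes a simple root at $0$; for $\Re\Lambda>0$ this is strictly to the right of $\mu_-(\Lambda)$, and for $\Lambda$ on the boundary $\Re\Lambda=0$ one checks $\Re\mu_-<0=\Re(0)$ as well, so the root at the origin is never the one of smallest real part. The $2N$ roots coming from the $\mu^2+O(\varepsilon)$ factors are, at $\varepsilon=0$, a degenerate cluster at $0$; here I would invoke the block-triangular structure recorded in Proposition~\ref{prop:exofcmf}, whose diagonal blocks $Y_n$ on the \emph{hyperbolic} variables have been removed by the centre-manifold reduction, so that on the centre directions $\{W_{4n+1},W_{4n+2}\}$ the asymptotic dynamics is $\dot W=O(\varepsilon)$ and the associated roots are $o(1)$ as $\varepsilon\to0$ — in particular they lie in an arbitrarily small neighbourhood of the origin and hence, for $\varepsilon$ small, strictly to the right of $\mu_-(\Lambda)$ whenever $\Re\mu_-<0$, which holds uniformly on the compact set in question.

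Finally I would assemble these facts: by a standard continuity/compactness argument (continuity of roots of a polynomial in its coefficients, uniformly over the compact set $0\le\abs\Lambda\le R_0$, $\Re\Lambda\ge0$), there is $\varepsilon_2>0$ such that for all $0\le\varepsilon\le\varepsilon_2$ the root of $\chi_\varepsilon(\cdot;\Lambda)$ nearest $\mu_-(\Lambda)$ remains simple, depends continuously on $(\varepsilon,\Lambda)$, and is still the unique root of smallest real part, with a uniform spectral gap to the rest. The main obstacle I anticipate is the degenerate cluster at $0$: one must rule out that one of the $O(\varepsilon)$-perturbed roots coming from the $W_{4n+1},W_{4n+2}$ directions (or the $\mu$-factor root) accidentally overtakes $\mu_-(\Lambda)$ in real part. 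This is where I would lean on the normal-hyperbolicity bookkeeping of Proposition~\ref{prop:exofcmf} — the genuinely hyperbolic directions with $O(1)$ real parts have already been split off — and on the strict inequality $\Re\mu_-(\Lambda)<0$ holding \emph{uniformly} over the compact parameter region, including the imaginary-axis boundary. Once that uniform gap at $\varepsilon=0$ is in hand, the perturbation is routine.
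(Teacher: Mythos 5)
Your proposal follows essentially the same route as the paper: at $\varepsilon=0$ the characteristic polynomial factors as $\chi_0(\mu;\Lambda)=(-\mu)^{2N+1}\chi_{\KdV}(\mu;\Lambda)$ with $\chi_{\KdV}(\mu;\Lambda)=\mu^3+\tfrac1s\mu-\tfrac1s\Lambda$, the Pego--Weinstein result supplies the unique simple root of smallest (negative) real part for $\Re\Lambda\ge-\nu$, and persistence for small $\varepsilon$ is obtained by tracking this simple root (the paper via the implicit function theorem plus a finite-subcover argument over the compact set $K=\{\Re\Lambda\ge-\nu\}\cap\{\abs{\Lambda}\le R_0\}$, you via continuity of roots, which amounts to the same thing). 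Your explicit remark that the $(2N+1)$-fold cluster at the origin cannot overtake $\mu_{\KdV}(\Lambda)$ because of the uniform spectral gap is a point the paper leaves implicit, but it is correct and does not change the argument.
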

\begin{proof}
Let $\chi_{0}(\mu;\Lambda)$ and $\chi_\KdV(\mu;\Lambda)$ denote the characteristic polynomials of the asymptotic
matrices associated with the systems \eqref{eq:evp-eps0} and \eqref{eq:evp-kdv}, respectively.
By inspection of these matrices, one finds
\begin{equation*}
	\chi_{0}(\mu;\Lambda) = (-\mu)^{2N+1} \chi_\KdV(\mu;\Lambda).
\end{equation*}
The polynomial $\chi_\KdV(\mu;\Lambda) = \mu^3+\frac{1}{s}\mu-\frac{1}{s}\Lambda$ has the following property (see \cite[p.~72]{PegoWeinstein92}): 
There exists some $\nu>0$ such that for all
$\Lambda$ with $\Re\Lambda \ge -\nu$ there is a unique simple root $\mu_\KdV(\Lambda)$ with smallest real part (which is
negative).
Consequently, the polynomial $\chi_{0}(\mu;\Lambda)$ has this property as well, and we have
$\mu_0(\Lambda)=\mu_\KdV(\Lambda)$.

In order to show that $\chi_{\varepsilon}(\mu;\Lambda)$ also possesses this property for sufficiently small
$\varepsilon$,
we invoke the implicit function theorem. 
Let us define 
the compact set 
\begin{equation*}
K:=\{\Re\Lambda\ge -\nu\}\cap \{\abs{\Lambda}\le R_0\}.
\end{equation*}
The equation 
\begin{equation}
	0 = \chi(\mu;\varepsilon,\Lambda):=\chi_\varepsilon(\mu;\Lambda)
	\label{eq:chieqnull}
\end{equation}
has the solution $\mu_0(\Lambda)$ for $\varepsilon=0$, i.e.\ $\chi(\mu_0(\Lambda);0,\Lambda)=0$.
Since $\mu_0(\Lambda)$ is a simple root of $\chi_0(\mu;\Lambda)$, we know
\begin{equation*}
	\frac{\partial}{\partial\mu}\chi(\mu_0(\Lambda);0,\Lambda)\not= 0,
\end{equation*}
thus the implicit function theorem implies that Eq.~\eqref{eq:chieqnull} can be solved for $\mu$ in a neighbourhood of
$(\varepsilon,\Lambda) = (0, \Lambda_0)$ for any $\Lambda_0\in K$. 
What is more, for any $\Lambda_0\in K$ there exist some $\tilde\varepsilon_2>0$ and a smooth function 
\begin{equation*}
	\tilde\mu:(-\tilde\varepsilon_2,\tilde\varepsilon_2)\times \{\Lambda:\abs{\Lambda-\Lambda_0}<\tilde\varepsilon_2\}\to\C
\end{equation*}
with $\chi(\tilde\mu(\varepsilon,\Lambda);\varepsilon,\Lambda)=0$. 
In this way, we obtain an open cover of $K$ and its compactness allows to pass to a finite subcover. Hence, we find some
$\varepsilon_2>0$ and some function 
\begin{equation*}
	\mu:[0,\varepsilon_2)\times K\to\C
\end{equation*}
such that $\mu_\varepsilon(\Lambda):=\mu(\varepsilon,\Lambda)$
is the unique simple zero of smallest real part of $\chi_\varepsilon(\mu;\Lambda)$ for
all $0<\varepsilon<\varepsilon_2$ and all $\Lambda\in K$.
\end{proof}

We have thus shown that hypothesis (H3) holds on the domain 
\begin{equation*}
  \Omega := \{\Lambda\in\C:\Re\Lambda > -\nu\} \cap \{\abs{\Lambda}< R_0\}
\end{equation*}
and, hence, we may treat the system~\eqref{eq:evp-eps} by applying the theory due to Pego and Weinstein. Before doing
so, we recapitulate some of their notation.

Recall that for a linear differential equation 
\begin{equation*}
  \frac{\d{y}}{\d{x}} = \mathcal{A}(x)y,
\end{equation*}
where $y(x)$ is a column vector, the adjoint system is given by
\begin{equation*}
  \frac{\d{z}}{\d{x}} = -z\mathcal{A}(x),
\end{equation*}
where $z(x)$ is a row vector. In the following, we denote by 
\textup{(E$^\ast_\KdV$)}, 
$\left(\widehat{\textup{E}}^\ast_{N,0}\right)$ and $\left(\widehat{\textup{E}}^\ast_{N,\varepsilon}\right)$
the adjoint systems of 
\textup{(E$_\KdV$)}, 
$\left(\widehat{\textup{E}}_{N,0}\right)$ and $\left(\widehat{\textup{E}}_{N,\varepsilon}\right)$,
respectively.

Let $Z^+_\KdV(\Lambda)$, $Z^+_{0}(\Lambda)$, $Z^+_{\varepsilon}(\Lambda)$ and 
$Y^-_\KdV(\Lambda)$, $Y^-_{0}(\Lambda)$, $Y^-_{\varepsilon}(\Lambda)$
denote associated right, resp. left, eigenvectors of the asymptotic matrices normalized in such a way that $Y^-\cdot Z^+ = 1$ holds.
Then, we obtain the following lemma which states the existence of special functions spanning the stable space and the
dual of the unstable space, respectively, directly by applying \cite[Prop.~1.2]{PegoWeinstein92} to each of the systems 
\eqref{eq:evp-kdv}, \eqref{eq:evp-eps0} and \eqref{eq:evp-eps}. 
\begin{lem}{\cite[p.~56, Prop.~1.2]{PegoWeinstein92}}
	\label{lem:ExOfZetaAndEta}
  \textup{(i)}~For $0<\varepsilon<\varepsilon_2$ there are differentiable functions 
	\begin{equation*}
	\zeta^+_\KdV(\xi;\Lambda),\quad \zeta^+_{0}(\xi;\Lambda),\quad \zeta^+_{\varepsilon}(\xi;\Lambda), 
	\end{equation*}
	analytic with respect to $\Lambda\in\Omega$, with the following properties:\\
  $\zeta^+_\KdV(\xi;\Lambda)$ solves \textup{(E$_\KdV$)} and satisfies
	 $\e^{\mu_\KdV(\Lambda)\xi}\zeta^+_\KdV(\xi;\Lambda)\to Z^+_\KdV(\Lambda)$ as $\xi\to\infty$,\\
   $\zeta^+_{0}(\xi;\Lambda)$ solves $\left(\widehat{\textup{E}}_{N,0}\right)$
   and satisfies $\e^{\mu_{0}(\Lambda)\xi}\zeta^+_{0}(\xi;\Lambda)\to Z^+_{0}(\Lambda)$ as $\xi\to\infty$,\\
  $\zeta^+_{\varepsilon}(\xi;\Lambda)$ solves $\left(\widehat{\textup{E}}_{N,\varepsilon}\right)$ 
  and satisfies
	$\e^{\mu_{\varepsilon}(\Lambda)\xi}\zeta^+_{\varepsilon}(\xi;\Lambda)\to Z^+_{\varepsilon}(\Lambda)$ \mbox{as
  $\xi\to\infty$.}\\
	These conditions characterize the functions uniquely up to a constant factor.

\textup{(ii)}~For $0<\varepsilon<\varepsilon_2$ there are differentiable functions 
	\begin{equation*}
	\eta^-_\KdV(\xi;\Lambda),\quad \eta^-_{0}(\xi;\Lambda),\quad \eta^-_{\varepsilon}(\xi;\Lambda), 
	\end{equation*}
	analytic with respect to $\Lambda$, with the following properties:\\
  $\eta^-_\KdV(\xi;\Lambda)$ solves \textup{(E$^\ast_\KdV$)} and satisfies
  $\e^{\mu_\KdV(\Lambda)\xi}\eta^-_\KdV(\xi;\Lambda)\to Y^-_\KdV(\Lambda)$ \mbox{as $\xi\to -\infty$,}\\
  $\eta^-_{0}(\xi;\Lambda)$ solves $\left(\widehat{\textup{E}}^\ast_{N,0}\right)$ 
  and satisfies
	$\e^{\mu_{0}(\Lambda)\xi}\eta^-_{0}(\xi;\Lambda)\to Y^-_{0}(\Lambda)$ as $\xi\to -\infty$,\\
  $\eta^-_{\varepsilon}(\xi;\Lambda)$ solves $\left(\widehat{\textup{E}}^\ast_{N,\varepsilon}\right)$ 
  and satisfies
	$\e^{\mu_{\varepsilon}(\Lambda)\xi}\eta^-_{\varepsilon}(\xi;\Lambda)\to Y^-_{\varepsilon}(\Lambda)$ \mbox{as
  $\xi\to -\infty$.}\\
	These conditions characterize the functions uniquely up to a constant factor.
\end{lem}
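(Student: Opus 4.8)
The plan is to obtain the lemma, in both parts and for all three systems, by a single appeal to \cite[Prop.~1.2]{PegoWeinstein92}. Applied to a linear system $w'=\mathcal{M}(\xi;\Lambda)w$ whose coefficient matrix converges exponentially fast, as $\xi\to\pm\infty$, to a limit possessing a \emph{unique, simple} eigenvalue $\mu(\Lambda)$ of smallest real part depending analytically on $\Lambda$ on a domain, that proposition furnishes in one stroke a solution of the system with the $\xi\to+\infty$ normalization $\e^{\mu(\Lambda)\xi}\zeta^{+}\to Z^{+}$ and a solution of the adjoint system with the $\xi\to-\infty$ normalization $\e^{\mu(\Lambda)\xi}\eta^{-}\to Y^{-}$ --- both analytic in $\Lambda$ and unique up to a nonzero scalar. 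Hence the whole task reduces to checking the hypotheses (H1)--(H4) of \cite{PegoWeinstein92} for each of \eqref{eq:evp-kdv}, \eqref{eq:evp-eps0} and \eqref{eq:evp-eps} on $\Omega=\{\Re\Lambda>-\nu\}\cap\{\abs{\Lambda}<R_0\}$.

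For \eqref{eq:evp-eps} this is already in hand: (H1), (H2) and (H4) are read off from the explicit coefficient matrix $\NA$, whose $\Xi$-dependence occurs only through exponentially decaying functions (to leading order $A_\ast$ and $\dot A_\ast$, which descend from hypothesis (A2) on the profile), while (H3) is precisely Lemma~\ref{lem:ChiEpsHasSimpleEV}, which moreover supplies the analytic branch $\mu_\varepsilon(\Lambda)$ for $0<\varepsilon<\varepsilon_2$. For \eqref{eq:evp-kdv} the four hypotheses are exactly those Pego and Weinstein themselves verify for the KdV eigenvalue problem with $\sech^2$-potential $A_\ast$; in particular (H3) holds with $\mu_\KdV(\Lambda)$ the unique simple smallest-real-part root of $\chi_\KdV(\mu;\Lambda)=\mu^{3}+\tfrac1s\mu-\tfrac1s\Lambda$. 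For \eqref{eq:evp-eps0}, (H1), (H2) and (H4) are immediate --- the block complementary to the KdV triple is constant-coefficient with eigenvalues $0$ and $\pm\sqrt{\delta(\lambda_n-\lambda_0)}$, so the limits at $\pm\infty$ exist and the deviator stays integrable --- while the factorization $\chi_0(\mu;\Lambda)=(-\mu)^{2N+1}\chi_\KdV(\mu;\Lambda)$ shows that the eigenvalue of smallest real part of the asymptotic matrix is again $\mu_0(\Lambda)=\mu_\KdV(\Lambda)$ and simple on $\Omega$, which is (H3). Applying \cite[Prop.~1.2]{PegoWeinstein92} to each system now delivers $\zeta^{+}_\KdV,\zeta^{+}_0,\zeta^{+}_\varepsilon$ of part~(i) and, from the respective adjoint systems \textup{(E$^\ast_\KdV$)}, $\left(\widehat{\textup{E}}^\ast_{N,0}\right)$ and $\left(\widehat{\textup{E}}^\ast_{N,\varepsilon}\right)$, the functions $\eta^{-}_\KdV,\eta^{-}_0,\eta^{-}_\varepsilon$ of part~(ii); the normalization $Y^{-}\cdot Z^{+}=1$ is inherited from the chosen normalization of the asymptotic eigenvectors, and uniqueness up to a constant is part of the conclusion of Prop.~1.2.

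I do not expect any genuine obstacle: the lemma is a threefold application of a known result, and all the analytic work is already packed into Lemma~\ref{lem:ChiEpsHasSimpleEV}. The two points deserving a line of care are (a) that a single threshold $\varepsilon_2>0$ serves all six functions --- automatic, since \eqref{eq:evp-kdv} and \eqref{eq:evp-eps0} impose no condition on $\varepsilon$ while Lemma~\ref{lem:ChiEpsHasSimpleEV} already produces such an $\varepsilon_2$ for \eqref{eq:evp-eps} --- and (b) that the deviator of \eqref{eq:evp-eps} is integrable \emph{uniformly} in $\varepsilon$, which holds because, after the slow rescaling $\Xi=\varepsilon\xi$, the $\Xi$-dependence of $\NA$ together with that of all its $\varepsilon$-dependent correction terms is carried by the Schwartz functions $A_\ast$, $\dot A_\ast$ and their derivatives, none of which depends on $\varepsilon$. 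Once the six functions are available, the Evans functions $D_\KdV$, $D_0$, $D_\varepsilon$ are formed as the customary $\eta^{-}\!\cdot\zeta^{+}$ pairings, and Proposition~\ref{prop:redprob} --- hence, together with Proposition~\ref{prop:exofcmf}, Theorem~\ref{thm:SmallAmplKdVRegime} --- will follow from continuity of $D_\varepsilon$ in $\varepsilon$ and Pego and Weinstein's non-vanishing of $D_\KdV$ on $\{\Re\Lambda>0\}$.
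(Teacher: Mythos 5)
Your proposal is correct and follows essentially the same route as the paper: the lemma is obtained there, too, by verifying the Pego--Weinstein hypotheses (H1)--(H4) for each of the three systems --- with (H3) being exactly the content of Lemma~\ref{lem:ChiEpsHasSimpleEV} and the factorization $\chi_0=(-\mu)^{2N+1}\chi_\KdV$ --- and then invoking \cite[Prop.~1.2]{PegoWeinstein92} directly. Your additional remarks on the common threshold $\varepsilon_2$ and the $\varepsilon$-uniform integrability of the deviator are sensible points of care that the paper leaves implicit.
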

With these functions at hand, we can define the Evans functions
\begin{align*}
	D_\KdV(\Lambda) &:= \eta^-_\KdV(\xi;\Lambda)\cdot \zeta^+_\KdV(\xi;\Lambda),\\
	\hat{D}_0(\Lambda) &:= \eta^-_0(\xi;\Lambda)\cdot \zeta^+_0(\xi;\Lambda),\\
	\hat{D}_\varepsilon(\Lambda) &:= \eta^-_\varepsilon(\xi;\Lambda)\cdot \zeta^+_\varepsilon(\xi;\Lambda)
\end{align*}
for the systems \eqref{eq:evp-kdv}, \eqref{eq:evp-eps0} and \eqref{eq:evp-eps}
in the vein of Pego and Weinstein.
For later use, we recall their result on $D_\KdV$.
\begin{lem}
	\label{lem:KdVEvansFunction}
	The Evans function $D_\KdV:\Omega_\KdV\to\C$ is analytic on the domain 
  $\Omega_\KdV = \{\Lambda\in\C:\Re\Lambda > -\nu\}$, with some $\nu>0$, 
  and has the following properties:
	\begin{enumerate}
      \item $D_\KdV(0) = D_\KdV'(0) = 0$, $D_\KdV''(0)\not=0$, and
      \item $D_\KdV(\Lambda)\not=0$ for all $\Lambda\not=0$ with $\Re\Lambda\ge 0$.
	\end{enumerate}
\end{lem}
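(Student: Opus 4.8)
The plan is to obtain Lemma~\ref{lem:KdVEvansFunction} by specializing the analysis of Pego and Weinstein~\cite{PegoWeinstein92} to the cubic characteristic polynomial $\chi_\KdV(\mu;\Lambda)=\mu^3+\tfrac1s\mu-\tfrac1s\Lambda$ of the asymptotic matrix of \eqref{eq:evp-kdv}. As recorded in the proof of Lemma~\ref{lem:ChiEpsHasSimpleEV} (and in \cite[p.~72]{PegoWeinstein92}), for every $\Lambda$ with $\Re\Lambda>-\nu$ this polynomial has a unique, simple root $\mu_\KdV(\Lambda)$ of smallest real part, which is negative; thus hypotheses (H1)--(H4) of \cite{PegoWeinstein92} hold on the full strip $\Omega_\KdV=\{\Re\Lambda>-\nu\}$ for the stand-alone system \eqref{eq:evp-kdv}, Lemma~\ref{lem:ExOfZetaAndEta} supplies the $\Lambda$-analytic solutions $\zeta^+_\KdV$ and $\eta^-_\KdV$ characterized by their exponential behaviour at $+\infty$, resp.\ $-\infty$, and $D_\KdV(\Lambda)=\eta^-_\KdV(\xi;\Lambda)\cdot\zeta^+_\KdV(\xi;\Lambda)$ is independent of $\xi$ — because the matrix in \eqref{eq:evp-kdv} has zero trace, so $\tfrac{\d}{\d\xi}\bigl(\eta^-_\KdV\cdot\zeta^+_\KdV\bigr)=0$ — and hence analytic on $\Omega_\KdV$ as a product of analytic factors.

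Next I would establish the double zero at the origin. That $\Lambda=0$ is an eigenvalue follows from the translational symmetry: differentiating the soliton profile equation \eqref{eq:kdvprofileeq} twice in $\Xi$ shows that $(W_2,W_3,W_4)=(\dot A_\ast,\ddot A_\ast,\dddot A_\ast)$ solves \eqref{eq:evp-kdv} with $\Lambda=0$, and since $A_\ast$ is the $\sech^2$ profile \eqref{eq:kdvsoliton}, $\dot A_\ast$ and its derivatives decay exponentially at $\pm\infty$; this bounded solution forces $D_\KdV(0)=0$. To get $D_\KdV'(0)=0$ I would exhibit a Jordan chain of length two above $\dot A_\ast$: the soliton $A_\ast$ sits inside a one-parameter scaling family of KdV solitons, and differentiating that family with respect to its parameter yields a generalized eigenfunction, so the generalized kernel at $\Lambda=0$ is at least two-dimensional; by Pego and Weinstein's identification of the order of vanishing of the Evans function at $\Lambda=0$ with the dimension of this generalized kernel, $D_\KdV$ vanishes to order at least two there. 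Finally, $D_\KdV''(0)\ne 0$ — equivalently, the order is exactly two — is their evaluation of $D_\KdV''(0)$ as a nonzero multiple of $\tfrac{\d}{\d c}$ of the soliton momentum $\int A_c^2\,\d\Xi$, which is strictly monotone for KdV solitons.

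For the nonvanishing statement I would import the spectral stability of the KdV soliton from \cite{PegoWeinstein92}. Rewriting \eqref{eq:evp-kdv} in the standard form $\partial_\Xi\mathcal{L}u=\Lambda u$ with $\mathcal{L}$ a Schrödinger operator whose potential is built from $A_\ast$, one uses: for $\Re\Lambda>0$, the argument-principle count of zeros of $D_\KdV$ in the right half-plane, which by Pego and Weinstein's index formula — relating it to the number of negative eigenvalues of $\mathcal{L}$ and the sign of the second derivative of the soliton action — evaluates to zero for the KdV soliton; and for $\Lambda\in i\R\setminus\{0\}$, the absence of purely imaginary eigenvalues, again from their analysis. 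Together with the behaviour at $\Lambda=0$ from the previous paragraph, this gives $D_\KdV(\Lambda)\ne 0$ for every $\Lambda\ne 0$ with $\Re\Lambda\ge 0$.

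In this scheme the $\xi$-independence, the analyticity, and the short symmetry computation behind $D_\KdV(0)=D_\KdV'(0)=0$ are routine. The substantive input — that the order of the zero at the origin is exactly two and that $D_\KdV$ has no further zeros in $\Cpluscl$ — is precisely the hard spectral-stability theorem for the KdV soliton, which I would not reprove. The one genuine obstacle, and the place where care is needed, is to check that \eqref{eq:evp-kdv} with the specific coefficients $s<0$, $r<0$ and the entry $\tfrac{\Lambda}{s}$ matches the normalization under which hypotheses (H1)--(H4) and the stability conclusions of \cite{PegoWeinstein92} are stated, after an affine change of variables $\Lambda\mapsto\mathrm{const}\cdot\Lambda$, $\Xi\mapsto\mathrm{const}\cdot\Xi$; once this bookkeeping is done, their results transfer verbatim.
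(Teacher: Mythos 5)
Your proposal is correct and follows essentially the same route as the paper, which states this lemma purely as a recollection of Pego and Weinstein's results on the KdV soliton and offers no proof of its own; your sketch correctly identifies the ingredients to be imported ($\dot A_\ast$ as the translational mode at $\Lambda=0$, the one-parameter soliton family for the Jordan chain, the monotonicity of the soliton momentum for $D_\KdV''(0)\ne 0$, and their spectral stability theorem for the nonvanishing on $\Cpluscl\setminus\{0\}$), together with the necessary normalization bookkeeping. One minor correction: the $\xi$-independence of $\eta^-_\KdV\cdot\zeta^+_\KdV$ is automatic from the adjoint pairing, since $\frac{\d}{\d\xi}\left(z\cdot y\right)=-z\mathcal{A}y+z\mathcal{A}y=0$ for any solutions of the equation and its adjoint; the zero-trace property is not needed for this (it is what makes the determinant form of an Evans function constant in $\xi$).
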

So far, we have gathered all the ingredients necessary to give the proof of Prop.~\ref{prop:redprob}.
\begin{proof}[Proof of Prop.~\ref{prop:redprob}]
	\textit{Step 1:} 
	For $\varepsilon = 0$, we find special solutions $\tilde{\zeta}_0^+$, $\tilde{\eta}_0^-$ to \eqref{eq:evp-eps0} and
	its adjoint system, namely 
	\begin{align*}
		\tilde\zeta_0^+ &= (0, \zeta^+_\KdV, 0, \dots, 0)^\T,\\
		\tilde\eta_0^- &= (\ast, \eta^-_\KdV,0,\dots,0),
	\end{align*}
	(with $\ast$ appropriately chosen)
	exhibiting the correct decay rate $\mu_0=\mu_\KdV$ for $\xi\to\pm\infty$, respectively.
	Therefore, Lm.~\ref{lem:ExOfZetaAndEta} implies 
	that there are complex constants $\gamma_1,\gamma_2\in\C$ such that 
	\begin{equation*}
		\zeta_0^+ = \gamma_1\tilde\zeta_0^+
		\quad \text{and}\quad
		\eta_0^- = \gamma_2\tilde\eta_0^-,
	\end{equation*}
	hence
	\begin{equation*}
		\hat{D}_0(\Lambda) = \eta_0^- \cdot \zeta_0^+ =\gamma_1\gamma_2 \tilde\eta_0^- \cdot \tilde\zeta_0^+ = \gamma \eta_\KdV^-
		\cdot \zeta_\KdV^+ = \gamma D_\KdV(\Lambda)
	\end{equation*}
	with $\gamma:=\gamma_1\gamma_2$ being constant. Thus, Lm.~\ref{lem:KdVEvansFunction} implies that $\hat{D}_0(\Lambda)$ does not vanish in $\Omega$ except
	for $\Lambda=0$ where a double zero is present.\\
	\textit{Step 2:}
	By Lm. \ref{lem:ExOfZetaAndEta} the functions $\zeta^+_\varepsilon,\eta^-_\varepsilon$, hence $\hat{D}_\varepsilon(\Lambda)$, also exist for
	$0\leq \varepsilon < \varepsilon_2$.\\
		The Evans function $\hat{D}_\varepsilon(\Lambda)$ is analytic in $\Lambda$ (and continuous with respect to $\varepsilon$), 
	thus we can compare the numbers of zeros of $\hat{D}_\varepsilon$ and $\hat{D}_0$ inside a given domain by invoking Rouch\'es theorem.
	First, we consider a small open ball $U_0$ centred at the the origin. By choosing $\varepsilon$ sufficiently small, say
	$0\le \varepsilon<\varepsilon_3$ we ensure
	that $\abs{\hat{D}_\varepsilon(\Lambda)-\hat{D}_0(\Lambda)}<\abs{\hat{D}_0(\Lambda)}$ holds on the boundary $\partial U_0$; this is
	possible since $\hat{D}_\varepsilon$ is continuous in $\varepsilon$, coincides with $\hat{D}_0$ for $\varepsilon=0$, and
	$\hat{D}_0(\Lambda)$ does not vanish on $\partial U_0$. 
  Therefore, the number of zeros of $\hat{D}_\varepsilon$ in $U_0$ equals the number of zeros of $\hat{D}_0$ in
  $U_0$, which is two by the previous step.\\ 
  Second, for any open ball $U\subset \overline{\Omega}\setminus U_0$, we similarly find that $\hat{D}_\varepsilon$
	does not vanish on $U$ provided $\varepsilon$ is sufficiently small. Since $\overline\Omega\setminus U_0$ is compact, we may
	pass to a finite subcover to conclude that there exists some $\varepsilon_4>0$ such that $\hat{D}_\varepsilon$ does not vanish on
	$\overline\Omega\setminus U_0$ for all $0\le \varepsilon<\varepsilon_4$.\\
	\textit{Step 3:}
	On the other hand, $\hat{D}_\varepsilon$ has a double zero in	$\Lambda=0$ due to the generalized eigenvectors associated
  with horizontal shifts and changes in speed, see 
  Thm.~\ref{thm:existenceofevansfunctions}. As we have shown that there are at most two zeros, we see that
	$\hat{D}_\varepsilon(\Lambda)\not=0$ for all $\Lambda\in\Omega\setminus\{0\}$ with $\Re\Lambda\ge 0$ and for all
	$0\le\varepsilon<\varepsilon_0:=\min\{\varepsilon_k:k=1,\cdots,4\}$; this concludes the proof.
\end{proof}
 
\section{Perspective}
\label{sec:outlook}

In order to illustrate the wider perspective of our approach, we finally give the following conjecture.
\begin{conj*}
  Consider a regular ISW $U^c(\xi,y)$. Then:\\
	(i)~After suitable normalization of the $N$-th order truncated Evans function $D_N$, the limit
  \[D = \lim_{N\to\infty} D_N\] 
  exists on $\Cpluscl$. This $D$ satisfies
  \begin{equation*}
    D(0) = 0\quad \text{and} \quad D'(0) = 0.
  \end{equation*}

  (ii)~If the amplitude of $U^c$ is sufficiently small, $D$ satisfies, in addition,
  \begin{equation*}
    D''(0) \not= 0,\quad \text{and}\quad D(\kappa)\not=0\text{ for }\kappa\in\Cpluscl\setminus\{0\}.
  \end{equation*}
%
%
\end{conj*}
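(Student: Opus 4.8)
The plan is to promote the formal constructions of Sections~2--4 to rigorous ones, treating the two parts of the conjecture separately. For part~(i), I would first give~\eqref{eq:evp-spatdyn} a genuine functional-analytic meaning on a suitable scale of Hilbert spaces over~$\W$, and then construct an Evans function $D(\kappa)$ for it as a (modified) Fredholm determinant, in the spirit of the infinite-dimensional Evans-function theory of Latushkin and collaborators~\cite{GesztesyLatushkinEtAl08,LatushkinPogan11}. The structural facts that make this plausible are: $\A^\infty(\kappa)$ is block-diagonal over the eigenspaces of $T=\partial_y(\bar{\rho}\partial_y)$ (see~\eqref{eq:AinftyIsASum}), and each $4\times4$ block $\mathcal{A}_M(\kappa)$ has consistent splitting on the domain $\Omega\supset\Cpluscl$ furnished by Lm.~\ref{lem:eigenvalues}; hence $\A^\infty(\kappa)$ carries an exponential dichotomy on~$\R$ (with both the stable and the unstable summand infinite-dimensional), while the deviator $\B(\xi;\kappa)=\A(\xi;\kappa)-\A^\infty(\kappa)$ is exponentially localised in~$\xi$ by hypothesis~(A2). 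Granting that $\B$ acts as a relatively small --- or relatively smoothing --- perturbation, \eqref{eq:evp-spatdyn} inherits exponential dichotomies on $\R_+$ and $\R_-$, and $D(\kappa)$ is the determinant measuring the overlap at $\xi=0$ of the subspace decaying at $+\infty$ with the one decaying at $-\infty$. The convergence $D_N\to D$ is then a Galerkin statement: $Q_N$ projects onto the first $N+1$ eigenblocks, so $Q_N\A^\infty Q_N$ and $Q_N\B Q_N$ converge to $\A^\infty$, $\B$ in a sense compatible with the dichotomies, and after dividing each $D_N$ by the explicit constant-coefficient Evans function of $\NA^\infty(\kappa)$ --- which soaks up the divergent mode-by-mode factors and is the intended ``suitable normalisation'' --- the relative determinants converge and the limit defines~$D$.

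The identities $D(0)=D'(0)=0$ of part~(i) would be inherited from the $N$-independent mechanism already behind Thm.~\ref{thm:existenceofevansfunctions}: by Thm.~\ref{thm:evp-spatdyn} the functions $\partial_\xi U^c$ and $\partial_c U^c$ yield a bounded, localised solution $V_1$ of~\eqref{eq:evp-spatdyn} at $\kappa=0$ and a bounded solution $V_2$ of the associated first-variation equation; these descend mode-by-mode to the truncations and force the double zero of every~$D_N$, stably under the normalisation above. For part~(ii), the low-frequency window $\abs{\kappa}<R_0\varepsilon^{3}$ is already controlled by Thm.~\ref{thm:SmallAmplKdVRegime}, but only for $\varepsilon<\varepsilon_0(N)$; one therefore has to re-run the slow-fast analysis of Section~4 \emph{uniformly in~$N$}. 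This should be feasible because the rescaling of Prop.~\ref{prop:exofcmf} decouples, at $\varepsilon=0$, the zeroth mode --- which carries exactly the KdV eigenvalue problem~\eqref{eq:evp-kdv} --- from the higher modes $n\ge1$, which contribute only the hyperbolic blocks $Y_n$ with real eigenvalues $\pm\sqrt{\delta(\lambda_n-\lambda_0)}$ that grow with~$n$, the higher modes entering the centre dynamics only at order~$\varepsilon$. Thus $D_{N,\varepsilon}$, renormalised by the explicit product of the higher-mode factors, should converge to a constant multiple of $D_\KdV$ uniformly in~$N$ --- making this uniformity precise is one of the delicate points --- whereupon Lm.~\ref{lem:KdVEvansFunction} yields, in the limit, $D''(0)\neq0$ together with $D(\kappa)\neq0$ for $0<\abs{\kappa}<R_0\varepsilon^{3}$. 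To upgrade ``$D(\kappa)\neq0$'' to all of $\Cpluscl\setminus\{0\}$ one must also exclude zeros at $\O(1)$ and high frequency; since a small-amplitude wave is a perturbation of the rest state $\bar{U}$, which linearly supports no unstable modes, I would compare $D_{N,\varepsilon}(\kappa)$ --- uniformly in $\kappa$ and $N$ --- with the constant-coefficient dichotomy of $\NA^\infty(\kappa)$ (robust on $\Cpluscl$ away from the origin by Lm.~\ref{lem:eigenvalues}), using a high-frequency/smoothing estimate on~$\B$.

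The main obstacle is squarely in part~(i): equipping~\eqref{eq:evp-spatdyn} with a space on which an exponential dichotomy and a (modified) Fredholm-determinant Evans function genuinely exist, \emph{and} proving that the finite-dimensional truncations converge to it. The operators $R_j,S_k$ of Thm.~\ref{thm:evp-spatdyn} contain differential operators up to third order in $\partial_y$, so $\B$ is not compact on~$\W$; the technical heart of the program is to find the right scale of spaces --- or the right regularisation of $\B$ --- rendering $\B$ relatively smoothing, which is exactly the ``certainly ill-posed'' point the paper deliberately sidesteps. Once part~(i) is secured, part~(ii) looks comparatively safe: its genuinely new ingredients are the $N$-uniformity of the Section-4 reduction and the global, non-low-frequency, exclusion of unstable spectrum, both of which I expect to be within reach of the explicit structure of $\NA^\infty$ and the smallness of the wave.
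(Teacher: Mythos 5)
This statement is the paper's concluding \emph{conjecture}; the paper offers no proof of it, and indeed its Section~5 explicitly frames the two issues you centre on --- giving \eqref{eq:evp-spatdyn} a rigorous meaning with an infinite-dimensional Evans function in the spirit of \cite{GesztesyLatushkinEtAl08,LatushkinPogan11}, and the convergence of the spanning functions $\zeta^+(\xi;\varepsilon,N)$, $\eta^-(\xi;\varepsilon,N)$ as $N\to\infty$ --- as open problems and ``topics of ongoing work.'' So your proposal cannot be checked against a proof in the paper; it is a research program, and as a program it is well aligned with the author's own stated outlook. But it is not a proof, and the gaps you yourself flag are genuine, not merely technical polish. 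For part~(i): the deviator $\B(\xi;\kappa)=\A(\xi;\kappa)-\A^\infty(\kappa)$ contains variable-coefficient differential operators up to $\partial_y^3$ (see $\tilde S_2$ in Thm.~\ref{thm:evp-spatdyn}), so it is neither compact nor relatively bounded with respect to $\A^\infty$ on $\W$ in any evident way; moreover the spectrum of each block $\mathcal{A}_M(\kappa)$ contains eigenvalues whose real parts are unbounded in both signs as $M\to\infty$ (the roots $\pm(\lambda_M\delta)^{-1/2}$ of \eqref{eq:CharPol-kt} correspond to $\mu\approx\pm\sqrt{\lambda_M\delta}$), so $\A^\infty(\kappa)$ generates no semigroup in either direction and the very notion of ``exponential dichotomy'' must be replaced by a bi-semigroup or similar structure before any perturbation argument can start. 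Declaring the normalisation to be ``division by the constant-coefficient Evans function of $\NA^\infty(\kappa)$'' is a plausible guess, but you give no argument that the resulting relative (or $2$-modified) determinants form a convergent sequence; this is precisely the content of the conjecture, not a step towards it.

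For part~(ii) there are two further unproved claims. First, Thm.~\ref{thm:SmallAmplKdVRegime} gives $\varepsilon_0=\varepsilon_0(N,R_0)$; your observation that the hyperbolic rates $\pm\sqrt{\delta(\lambda_n-\lambda_0)}$ grow with $n$ is a reasonable heuristic for $N$-uniformity, but the Fenichel-type reduction in Prop.~\ref{prop:exofcmf} and the Rouch\'e comparison in the proof of Prop.~\ref{prop:redprob} both produce $\varepsilon$-thresholds whose dependence on $N$ is uncontrolled as stated, so ``re-run the analysis uniformly in $N$'' is an announcement, not an argument. Second, and more seriously, the conjectured nonvanishing of $D$ on all of $\Cpluscl\setminus\{0\}$ requires excluding zeros at $\abs{\kappa}=\O(1)$ and at high frequency; the paper establishes nothing of this kind even for a single fixed $N$ (Thm.~\ref{thm:SmallAmplKdVRegime} only covers $\abs{\kappa}<R_0\varepsilon^3$), and your proposed comparison with the rest state would need a quantitative bound on $\B$ uniform in $\kappa$ and $N$ that is nowhere constructed. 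In short: the route you sketch is the natural one and matches the paper's own intentions, but every load-bearing step remains open.
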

A natural next step towards proving part~(ii) of this conjecture consists of investigating whether the result on
small-amplitude waves in Sec.~\ref{sec:smallwaves} allows for taking the limit $N\to\infty$. 
More concretely, we ask at first whether the two functions
$\zeta^+(\xi;\varepsilon,N)$ and $\eta^-(\xi;\varepsilon,N)$, which span
the stable space and the dual of the unstable space, respectively, converge in an appropriate sense as $N\to\infty$. This is
obvious for $\varepsilon=0$ because $\zeta^+(\xi;\varepsilon,N+1)$ is obtained from $\zeta^+(\xi;\varepsilon,N)$ simply by
appending a zero (and similarly for $\eta^-$). Would such a result be true for $\varepsilon>0$ as well?

\subsection*{Acknowledgments}
\noindent I would like to express my sincere gratitude to my PhD advisor, Prof.~Heinrich Freist\"uhler (University of
Konstanz), for having proposed the fascinating project on \textit{internal waves} to me and, in particular, for
inspiring discussions and helpful suggestions on the present work.
I also thank the \textit{Studienstiftung des deutschen Volkes} for their support by means of a PhD fellowship. 
\bibliographystyle{abbrv}
\bibliography{./mybib}
\end{document}